\numberwithin{equation}{section}
\crefname{hypothesis}{Hypothesis}{Hypotheses}
\newtheorem{theo}{Theorem}[section]
\newtheorem{prop}[theo]{Proposition}
\newtheorem{cor}[theo]{Corollary}
\newtheorem{lem}[theo]{Lemma}
\newtheorem{rema}[theo]{Remark}
\newtheorem{defi}[theo]{Definition}
\newcommand{\be}{\begin{equation}}
    \newcommand{\ee}{\end{equation}}
\newcommand{\bpr}{\begin{prop}}
    \newcommand{\epr}{\end{prop}}
\newcommand{\bt}{\begin{theo}}
    \newcommand{\et}{\end{theo}}
\newcommand{\bl}{\begin{lem}}
    \newcommand{\el}{\end{lem}}
\newcommand{\bc}{\begin{cor}}
    \newcommand{\ec}{\end{cor}}
\newcommand{\br}{\begin{rema}}
    \newcommand{\er}{\end{rema}}
\newcommand{\bd}{\begin{defi}}
    \newcommand{\ed}{\end{defi}}
\newcommand{\ds}{\displaystyle}
\def\R{\mathbb R}
\def\R{\mathbb R}
\def\T{\mathcal T}
\def\P{\mathcal P}
\def\FKsig{{\mathcal F}_{K,\sigma}}
\def\ts{\tau_{\sigma}}
\def\DKsig{D_{K,\sigma}}
\def\E{\mathcal E}
\def\Eint{\E_{\text{int}}}
\def\EK{\E_{K}}
\def\EKint{\E_{K,int}}
\def\Eext{\E_{\text{ext}}}
\def\EextD{\E_{\text{ext}}^D}
\def\EKextD{\E_{K,ext}^D}
\def\EKextN{\E_{K,ext}^N}
\def\FKsig{{\mathcal F}_{K,\sigma}}
\def\bx{{\bm x}}
\def\bu{{\bm u}}
\def\bw{{\bm w}}
\def\bV{{\bm V}}
\def\bv{{\bm v}}
\newcommand{\dd}{\mathrm{d}}
\newcommand{\dr}{\partial}
\def\k{{{K}}}
\def\l{{{L}}}
\newcommand{\ke}{{ {K}^*}}
\def\le{{{L}^*}}
\newcommand{\sig}{ \sigma}
\newcommand{\sige}{\sigma^*}
\newcommand{\M}{\mathfrak{M}}
\newcommand{\Mie}{\mathfrak{M}^*}
\newcommand{\Me}{\Mie\cup\dr\Mie}
\newcommand{\ut}{u_{\T}}
\newcommand{\vt}{v_{\T}}
\newcommand{\n}{\boldsymbol{{\bm{n}}}}
\newcommand{\nksig}{{\bm{n}}_{\sig K}}
\newcommand{\nkesige}{\boldsymbol{{\bm{n}}}_{\sige\ke}}
\newcommand{\tkele}{\boldsymbol{{\boldsymbol{\tau}}}_{{\ke\!\!\!,\le}}}
\newcommand{\tkl}{\boldsymbol{{\boldsymbol{\tau}}}_{{K\!,L}}}
\newcommand{\xk}{\bx_{\k}}
\newcommand{\xl}{\bx_{\l}}
\newcommand{\xke}{\bx_{\ke}}
\newcommand{\xle}{\bx_{\le}}
\newcommand{\Ee}{\mathcal{E}}
\newcommand{\Ds}{{\mathcal{D}}}
\newcommand{\DD}{\mathfrak{D}}
\newcommand{\D}{{\scriptstyle\mathcal{D}}}
\newcommand{\Dsigsiget}{{\mathcal{D}_{\sigma,\sigma^*}}}
\newcommand{\disig}{{\rm d}_{\sig}}
\newcommand{\msig}{{\rm m}_{\sig}}
\newcommand{\msige}{{\rm m}_{\sige}}
\newcommand{\mk}{{\rm m}_{\k}}
\newcommand{\mke}{{\rm m}_{\ke}}
\newcommand{\md}{{\rm m}_{\Ds}}
\newcommand{\xib}{{\bm \xi}}
\newcommand{\uk}{u_{\k}}
\newcommand{\ul}{u_{\l}}
\newcommand{\uke}{u_{\ke}}
\newcommand{\ule}{u_{\le}}
\newcommand{\sumdiam}{\sum_{\Ds\in\DD}}
\newcommand{\sumpri}{\sum_{\k\in\M}}
\newcommand{\sumdua}{\sum_{\ke\in{\overline\Mie}}}
\newcommand{\gradDD}{\nabla^{\DD }}
\newcommand{\gradD}{\nabla^{\Ds}}
\title{Large time behavior of nonlinear finite volume schemes for convection-diffusion equations}
\author{Cl\'ement Canc\`es\thanks{Inria, Univ.~Lille, CNRS, UMR 8524 - Laboratoire Paul Painlevé, F-59000 Lille, France
  (\email{clement.cances@inria.fr}, \email{maxime.herda@inria.fr})}
\and Claire Chainais-Hillairet\thanks{Univ. Lille, CNRS, UMR 8524, Inria - Laboratoire Paul Painlevé, F-59000 Lille, France. 
  (\email{claire.chainais@univ-lille.fr})}
\and Maxime Herda\footnotemark[1]
\and Stella Krell\thanks{Universit\'e de Nice, CNRS, UMR7351-Laboratoire J.-A. Dieudonn\'e. 
        F-06100 Nice, France.
        (\email{stella.krell@unice.fr})}
}
\begin{document}

\maketitle

    \begin{abstract}
        In this contribution we analyze the large time behavior of a family of nonlinear finite volume schemes for anisotropic convection-diffusion equations set in a bounded bidimensional domain and endowed with either Dirichlet and / or no-flux boundary conditions. We show that solutions to the two-point flux approximation (TPFA) and discrete duality finite volume (DDFV) schemes under consideration converge exponentially fast toward their steady state. The analysis relies on discrete entropy estimates and discrete functional inequalities. As a biproduct of our analysis, we establish new discrete Poincaré-Wirtinger,  Beckner and logarithmic Sobolev inequalities. Our theoretical results are illustrated by numerical simulations.
    \end{abstract}

    \maketitle
    
    \smallskip
    
    \begin{keywords}
    Finite volume methods, long-time behavior, entropy methods, discrete functional inequalities, logarithmic Sobolev inequalities.
    \end{keywords}
    
    \smallskip
    
    \begin{AMS}
    65M08, 35K51, 35Q84, 39B62
    \end{AMS}


    \section{Introduction}
    
    We are interested in the numerical discretization of linear convection diffusion equations in bounded domain with anisotropic diffusion and mixed Dirichlet-Neumann boundary conditions. More precisely, our aim is the preservation of the large time behavior of solutions at the discrete level. At the continuous level, the behavior of solutions in the large can be quantified thanks to so-called (relative) entropies. These global quantities depend on time and involve the solutions to the evolution and the stationary equations. They usually control a distance between the solution and the steady state. In dissipative models, thanks to appropriate functional inequalities of Poincaré or convex Sobolev type \cite{arnold_markowich_unterreiter_2001}, a quantitative time decay estimate of the entropy may be established. At the discrete level the challenges lie in the preservation of the dissipation of the entropy and the derivation of discrete counterparts of the functional inequalities. In the present contribution, we address both of these issues.

    Let  $T>0$ be a time horizon, $\Omega$ be a polygonal connected open bounded subset of $\R^2$ and $Q_T = \Omega\times (0,T)$. The boundary $\Gamma = \partial\Omega$ is divided in two parts $\Gamma = \Gamma^D\cup\Gamma^N$, denoted $\Gamma^D$ and $\Gamma^N$ which will be endowed with respectively non-homogeneous Dirichlet and no-flux boundary conditions. In the following we are interested in the numerical approximation of the solution $u\equiv u(\bm{x},t)$ of 
    \begin{subequations}\label{FPmodel}
        \begin{align}
            &\ds\partial_t u\,+\,{ \rm div} {\bm J}\ =\ 0 \quad\mbox{in}\quad Q_T\,,\label{FPmodel:cons}\\
            &\ds {\bm J}\ =\ -{\bm \varLambda}(\nabla u \,+\, u  \nabla  V)\quad\mbox{in}\quad Q_T\,,\label{FPmodel:flux}\\
            &\ds{\bm J}\cdot {\bm n}\ =\ 0\quad\mbox{on}\quad \Gamma^N\times (0,T)\,,\label{FPmodel:CLneum}\\
            &\ds u\ =\ u^D\quad\mbox{in}\quad \Gamma^D\times (0,T)\,,\label{FPmodel:CLdir}\\
            &\ds u(\cdot,0)\ =\ u_0\quad\mbox{in}\quad\Omega\,,\label{FPmodel:CI}
        \end{align}
    \end{subequations}
    where ${\bm n} $ denotes the outward unit normal to $ \partial \Omega$. We make the following assumptions on the data.
    \begin{enumerate}\label{hyp}
        \item[(A1)] The initial data $u_0$ is square-integrable and
        non-negative, i.e., $u_0\in L^2(\Omega)$ and $u_0 \geq 0$. 
        In the pure Neumann case $\Gamma^D = \emptyset$, we further
        assume that the initial data is non-trivial, i.e. 
        \be\label{eq:M1}
        M_1 = \int_{\Omega} u_0 {\rm d}{\bx} >0.
        \ee
        \item[(A2)] The exterior potential $V$ does not depend on time
        and belongs to $C^1({\overline\Omega},\R)$. 
        \item[(A3)] If $\Gamma^D \neq \emptyset$, the boundary data $u^D$ corresponds to a thermal Gibbs equilibrium, i.e. 
        \be\label{eq:thermal_eq_data}
        u^D(\bx) = \rho e^{-V(\bx)}, \qquad \forall \bx \in \Gamma^D
        \ee
        for some $\rho>0$. As a consequence, $\log u^D + V$ is constant on $\Gamma^D$.
        \item[(A4)] The anisotropy tensor ${\bm \varLambda}$ is supposed
        to be bounded (${\bm \varLambda} \in
        L^{\infty}(\Omega)^{2\times2}$), symmetric and uniformly
        elliptic. There exists $\lambda^M\geq\lambda_m>0$ such that 
        \be\label{ellipticity}
        \lambda_m\vert {\bm v}\vert^2\leq {\bm \varLambda}(\bx){\bm v}\cdot {\bm v}\leq \lambda^M\vert {\bm v}\vert^2 \quad \mbox{ for all ${\bm v}\in\R^2$ and almost all $\bx\in \Omega$.}
        \ee
    \end{enumerate}
    
    The large time behavior of solutions to Fokker-Planck equations with isotropic diffusion (namely \eqref{FPmodel:cons}-\eqref{FPmodel:flux} with ${\bm \varLambda}={\bm I}$) has been widely studied by Carrillo, Toscani and collaborators, see \cite{carrillo_toscani_1998,Toscani_99,carrillo_etal_2001} thanks to relative entropy techniques. In these papers, the exponential decay of the entropy is established  in the whole space $\Omega=\R^d$ or for special cases of boundary conditions ensuring that the steady-state $u^{\infty}$ is a Gibbs equilibrium (or thermal equilibrium), which means $u^{\infty}=\lambda e^V$, with $\lambda\in \R_+$. The case of more general Dirichlet boundary conditions and anisotropic diffusion, leading to different steady states has been recently dealt with by Bodineau {\em et al.} in \cite{bodineau_2014_lyapunov}. The method is still based on relative entropy techniques.
    
    When designing numerical schemes for the convection diffusion equations \eqref{FPmodel}, it is crucial to ensure that the scheme has a similar large time behavior as the continuous model. Indeed, it ensures the reliability of the numerical approximation in the large. Also it upgrades  local-in-time quantitative convergence estimates to global-in-time estimates (see Li and Liu \cite{li_2018_large}). 
    
    {The preservation of the long-time behavior at the discrete level starts with a structure-preserving design of the numerical scheme. This has been widely investigated for Fokker-Planck type models (see, non-exhaustively, \cite{scharfetter_1969_large, ilin_1969_difference, chang1970practical, larsen_1985_discretization, buet_2010_chang, liu_2012_entropy,cances_guichard_2017,CCHK, bailo2018fully, bianchini_2018_truly, pareschi2018structure, gosse_2020_aliasing}). However the sole preservation of a stationary state, entropy inequality or well-balanced structure is not sufficient  to rigorously derive  explicit rates of convergence to equilibrium. Recently there has been an effort made on the obtention quantitative estimates with explicit decay rates by the mean of discrete functional inequalities  \cite{bessemoulin_chainais_2017,filbet_herda_2017,chainais_herda, li_2018_large,dujardin2018coercivity,bessemoulin2018hypocoercivity}. In this direction, the case of multi-dimensional anisotropic diffusion and general meshes has essentially not been dealt with yet, which leads us to the present contribution.}

    In \cite{chainais_herda}, Chainais-Hillairet and Herda prove that a family of TPFA (Two-Point Flux Approximation) finite volume schemes for \eqref{FPmodel} with ${\bm \varLambda}={\bm I}$ satisfies the exponential decay towards the associated discrete equilibrium. This family of B-schemes includes the classical centered scheme, upwind scheme and Scharfetter-Gummel scheme (\cite{scharfetter_1969_large}). Let us mention that the Scharfetter-Gummel scheme is the only B-scheme of the family that preserves Gibbs/thermal equilibrium. Unfortunately, the B-schemes are based on a two-point flux approximation and they can only be used on restricted meshes. In order to deal with almost general meshes and with anisotropic tensors, Canc\`es and Guichard propose and analyze a nonlinear VAG scheme for the approximation of some generalizations of \eqref{FPmodel} in \cite{cances_guichard_2017}. In \cite{CCHK}, Canc\`es, Chainais-Hillairet and Krell establish the convergence of a free-energy diminishing discrete duality finite volume (DDFV) scheme for \eqref{FPmodel} with $\Gamma^D=\emptyset$. Some numerical experiments show the exponential decay of the numerical scheme towards the Gibbs/thermal equilibrium. In the present contribution, we establish this result theoretically. 
    
    {In order to prove our main results on the large-time behavior of nonlinear finite volume schemes, namely Theorem~\ref{th:expdec_TPFA_N} and Theorem~\ref{th:expdec_TPFA_DN} for TPFA schemes and Theorem~\ref{theo:expdec_DDFV} for DDFV schemes, we rely upon new discrete Poincaré-Wirtinger, Beckner and logarithmic-Sobolev inequalities that are established in Theorem~\ref{theo:func_ineq}. For previously existing results on discrete adaptation of functional inequalities (Poincaré, Poincaré-Wirtinger, Poincaré-Sobolev and  Gagliardo-Nirenberg-Sobolev) for finite volume schemes we refer to the work by Bessemoulin-Chatard {\em et al.} \cite{bessemoulin_chainais_filbet_2015} and references therein. Concerning convex Sobolev inequalities (Beckner and log-Sobolev), we refer to Chainais-Hillairet {\em et al.} \cite{chainais_jungel_schuchnigg_2016} and Bessemoulin-Chatard and J\"ungel \cite{MBC_AJ_2014}. In the previous papers the reference measure in the inequality, which is related to the steady state of a corresponding convection-diffusion equation, is uniform. Recently there were occurrences of new discrete functional inequalities of Poincaré-Wirtinger type associated with discretizations of nontrivial reference measures (essentially Gaussian), see Dujardin {\em et al.} \cite{dujardin2018coercivity}, Bessemoulin-Chatard {\em et al.} \cite{bessemoulin2018hypocoercivity} and Li and Liu \cite{li_2018_large}. In the present contribution, we deal with the case of discretizations of any absolutely continuous positive measure (in the bounded domain $\Omega$) with density bounded from above and away from~$0$.}
    
    \medskip
    
    \noindent{\bf Outline of the paper.} As a first step, we focus on nonlinear TPFA finite volume schemes for \eqref{FPmodel} in the isotropic case. These schemes can be seen as the reduction of the nonlinear DDFV scheme of \cite{CCHK} to some specific meshes. In Section~\ref{sec:TPFA}, we present the schemes and we establish a discrete entropy/dissipation property and some \emph{a priori} estimates satisfied by a solution to the scheme. They permit to establish the existence of a solution to the scheme.  Then, Section \ref{sec:TPFA_LTB} is devoted to the study of the long time behavior of the nonlinear TPFA schemes: we establish the exponential decay towards equilibrium. 
    In Section~\ref{sec:DDFV}, we consider the nonlinear DDFV scheme introduced in \cite{CCHK} for the anisotropic case and almost general meshes and we also establish its exponential decay towards equilibrium. Then, in Section~\ref{sec:logSob}, we prove the various functional inequalities which are crucial in the proof of the exponential decay in the case of no-flux boundary conditions. Finally, Section~\ref{sec:num_exp} is dedicated to numerical experiments.

    \section{The nonlinear two-point flux approximation (TPFA) finite volume schemes}\label{sec:TPFA}
    
    In this section, we introduce a particular family of finite volume schemes for \eqref{FPmodel} in the isotropic case ${\bm \varLambda}={\bm I}$ . They are based on a nonlinear two-point discretization of the following reformulation of the flux
    \[
    {\bm J}\ =\ -u\,\nabla (\log u +V)\,.
    \]
    In the following, we start by presenting the schemes. Then, we establish some \emph{a priori} estimates, which finally lead to the existence of a solution to the scheme.

    \subsection{Presentation of the schemes}
    
    Let us first introduce the notations describing the mesh. The mesh $\M=(\T,\E,\P)$ of the domain $\Omega$ is given by a family $\T$ of open polygonal control volumes, a family $\E$ of edges, and a family $\P=(\bx_{K})_{K\in\T}$ of points such that  $\bx_K\in K$ for all $K\in\T$. As it is classical for TPFA  finite volume discretizations including diffusive terms, we assume that the mesh is admissible in the sense of \cite[Definition 9.1]{EGHbook}. It implies that the straight line between two neighboring centers of cells $(\bx_{K},\bx_{L})$ is orthogonal to the edge $\sigma=K|L$.\\
    In the set of edges $\E$, we distinguish the interior edges $\sigma=K|L\in \Eint$ and the boundary edges $\sigma\in\Eext$. Within the exterior edges, we distinguish the Dirichlet boundary edges included in $\Gamma^{D}$ from the Neumann (no-flux) boundary edges included in $\Gamma^{N}$: $\Eext=\Eext^{D}\cup\Eext^{N}$. For a control volume $K\in\T$, we define $\EK$ the set of its edges, which is also split into $\EK=\EKint\cup\EKextD\cup\EKextN$. For each edge $\sigma\in\E$, there exists at least one cell $K\in\T$ such that $\sigma\in\EK$. 
    Moreover, we define $\bx_{\sigma}$ as the center of $\sigma$ for all $\sigma\in\E$.
    
    In the sequel, $d(\cdot,\cdot)$ denotes the {Euclidean} distance in $\R^2$ and $m(\cdot)$ denotes {both the Lebesgue measure and the $1$-dimensional Hausdorff
measure in $\R^2$}.  For all $K\in\T$ and $\sigma\in\E$, we set $\mk=m(K)$ and $\msig=m(\sigma)$. For all $\sigma\in\E$, we define $\disig=d(\bx_{K},\bx_{L})$ if $\sigma=K|L\in\Eint$ and $\disig=d(\bx_{K},\sigma)$, {which denotes the Euclidean distance between $x_K$ and its orthogonal projection on the line containing $\sigma$,} if $\sigma\in\Eext$, with $\sigma\in\EK$. Then the transmissibility coefficient is defined by $\ts=\msig/\disig$, for all $\sigma\in\E$.
    We assume that the mesh satisfies the following regularity constraint:
    \begin{equation}\label{regmesh}
        \text{There exists }\zeta >0\text{ such that }d(\bx_{K},\sigma)\geq \zeta\,\disig,\ \text{for all }K\in\T\text{ and }\sigma\in\EK\,.
    \end{equation}
    Let $\Delta t>0$ be the time step. We define $N_{T}$ the  integer  part of $T/\Delta t$ and $t^{n}=n\Delta t$ for all $0\leq n\leq N_{T}$. The size of the mesh is defined by $\text{size}(\T)=\max_{K\in\T}\text{diam}(K)$, {where $\text{diam}(K) = \sup_{x,y\in K}d(x,y)$} and we denote by $\delta=\max(\Delta t,\text{size}(\T))$ the size of the space--time discretization.
    
    A finite volume scheme for a conservation law with unknown $u$ provides a vector $\bu_{\T}=(u_{K})_{K\in\T}\in\R^{\theta}$ (with $\theta=\text{Card}(\T)$) of approximate values.
    However, since there are Dirichlet conditions on a part of the boundary, we also need to define approximate values for $u$ at the corresponding boundary edges: $\bu_{\E^{D}}=(u_{\sigma})_{\sigma\in\EextD}\in\R^{\theta^{D}}$ (with $\theta^{D}=\text{Card}(\EextD)$). Therefore, the vector containing the approximate values both in the control volumes and at the Dirichlet boundary edges is denoted by $\bu=(\bu_{\T},\bu_{\E^{D}})$.

    For all $K\in\T$ and $\sigma\in\E_{\text{ext}}^D$, we introduce  
    $V_K=V(\bx_K)$ and $V_{\sigma}= V(\bx_\sigma)$,
    and the associated $\bV=(\bV_\T,\bV_{\E^D})$. The boundary data $u^D$ is discretized by $u_\sigma=u^D(\bx_\sigma)$ for all $\sigma \in \E_{\text{ext}}^D$, so that $u_\sigma = \rho e^{-V_\sigma}$ according to (A3).

    For any vector $\bu=(\bu_{\T},\bu_{\E^{D}})$, we define the neighbor unknown for all $K\in\T$ and all $\sigma\in\EK$ to be
    \begin{equation}\label{uKsig}
        u_{K,\sigma}=\left\{\begin{array}{ll}
            u_{L}& \text{ if } \sigma=K|L\in\EKint,\\
            u_{\sigma}& \text{ if } \sigma\in\EKextD,\\
            u_{K} & \text{ if }\sigma\in\EKextN.
        \end{array}\right.
    \end{equation}
    We also define the difference operators, for all $K\in\T$ and $\sigma\in\E_K$ by 
    \begin{equation*}
        D_{K,\sigma}\bu=u_{K,\sigma}-u_{K}\,, \qquad D_{\sigma}\bu=|D_{K,\sigma}u|\,.
    \end{equation*}
    {Observe that $D_{\sigma}\bu$ does not depend on the control volume $K$ but only on the edge $\sigma$. Indeed, if $\sigma = K|L$ then $|D_{K,\sigma}\bu|= |D_{L,\sigma}\bu|$, if $\sigma\in\E_{\text{ext}}^N$ then $|D_{K,\sigma}\bu|=0$ and if $\sigma\in\E_{\text{ext}}^D$ there is a unique $K$ such that $\sigma\in\E_K$.}

    The family of schemes we consider in this section writes as:
    \begin{subequations}\label{scheme}
        \begin{align}
            &\mk\,\frac{u_K^{n+1}-u_K^n}{\Delta t} \,+\,\sum_{\sigma\in \E_K} \FKsig^{n+1}\ =\ 0\quad\text{for all }K\in\T\text{ and }n\geq 0\,,\label{scheme_glob}\\[.5em]
            & \FKsig^{n+1}\ =\ -\ts {\overline u}_{\sigma}^{n+1}\,\DKsig(\log \bu^{n+1} \,+\, \bV) \quad \text{for all }K\in\T\,,\ \sigma \in\E_K\text{ and }n\geq 0\,,\label{scheme_flux}\\[.5em]
            &u_\sigma^{n+1}\ =\ u_\sigma^D\quad\text{for all }\sigma \in \E_{\text{ext}}^D\text{ and }n\geq 0,\label{scheme_CLdir}\\[.5em]
            &u_K^0=\frac{1}{\mk }\int_K u_0(\bx) \,\dd\bx\quad\text{for all } K\in\T.\label{scheme_CI}
        \end{align}
    \end{subequations}
    Let us first remark that the definition \eqref{uKsig} ensures that the numerical fluxes $\FKsig^{n+1}$ defined by \eqref{scheme_flux} vanish on the Neumann boundary edges. 
    It remains to define the values ${\overline u}_{\sigma}^{n+1}$ for the interior edges and the Dirichlet boundary edges. We define ${\overline u}_{\sigma}^{n+1}$ as a ``mean value'' of $u_K^{n+1}$ and $u_L^{n+1}$ if $\sigma=K|L$ or a mean value of $u_K^{n+1}$ and $u_\sigma^D$ if $\sigma\in\E_K^D$. More precisely, we set
    \begin{equation}\label{def:ubarsig}
        {\overline u}_{\sigma}^{n+1}=\left\{\begin{array}{ll}
            r(u_K^{n+1},u_L^{n+1})& \text{ if } \sigma=K|L\in\EKint,\\[2.mm]
            r(u_K^{n+1},u_\sigma^{D})& \text{ if } \sigma\in\EKextD,
        \end{array}
        \right.
    \end{equation}
    where $r:(0,+\infty)^2\to (0,+\infty)$ satisfies the following properties.
    \begin{subequations}\label{hyp:g}
        \begin{align}
            & \mbox{$r$ is monotonically increasing with respect to  both variables;}\label{hyp:g:monotony}\\
            & \mbox{$r(x,x)=x$ for all $x\in (0,+\infty)$ and $r(x,y)=r(y,x)$ for all  $(x,y)\in (0,+\infty)^2$;}\label{hyp:g:cons+cons}\\
            &\mbox{$r(\lambda x,\lambda y)=\lambda r(x,y)$  for all $\lambda >0$ and all $(x,y)\in (0,+\infty)^2$;}\label{hyp:g:homogeneity}\\
            &\mbox{$\displaystyle\frac{x-y}{\log x-\log y} \leq r(x,y)\leq \max(x,y)$ for all $(x,y)\in (0,+\infty)^2$, $x\neq y$.\label{hyp:g:bounds}}
        \end{align}
    \end{subequations}
    Let us emphasize that one has for all $x,y>0$
    \begin{equation}\label{eq:ineg_r}
    \frac{x-y}{\log x-\log y}\ \leq\ \left(\frac{\sqrt{x}+\sqrt{y}}{2}\right)^2\ \leq\ \frac{x+y}{2}\leq \max(x,y)\,,
\end{equation}
    and that each function appearing in the last sequence of inequalities satisfies the properties \eqref{hyp:g}.
    
    \subsection{Steady state of the scheme}
    We say that $\bu^\infty=(\bu_{\T}^\infty,\bu_{\E^{D}}^\infty)$ is a steady state of the scheme \eqref{scheme} if it satisfies 
    \begin{equation}\label{scheme_stat_glob}
        \sum_{\sigma\in \E_K} \FKsig^{\infty}\ =\ 0\quad\text{for all }K\in\T\,,
    \end{equation}
    with the steady flux defined for all $K\in\T$ and $\sigma \in\E_K$ as 
    \begin{equation}\label{scheme_stat_flux}
        \FKsig^{\infty}\ =\ -\ts {\overline u}_{\sigma}^{\infty}\,\DKsig(\log \bu^{\infty} \,+\, \bV)\,,
    \end{equation}
    as well as the boundary/compatibility conditions 
    \begin{equation}\
        \left\{
        \begin{array}{ll}\label{stat_boundary_compat}
            \ds u_\sigma^{\infty}\ =\ u_\sigma^D\quad\text{for all }\sigma \in \E_{\text{ext}}^D\,,&\text{ if }\E_{\text{ext}}^D\neq\emptyset\,,\\[.5em]
            \ds\sum_{K\in\T} \mk  u_K^\infty=\ds\sum_{K\in\T} \mk  u_K^0=\int_\Omega u_0\ =:\ M^1\,,&\text{ if }\E_{\text{ext}}^D=\emptyset\,.
        \end{array}
        \right.
    \end{equation}
    
    In the case $\E_{\text{ext}}^D=\emptyset$, namely with full no-flux boundary conditions, the condition \eqref{stat_boundary_compat} is imposed to ensure uniqueness of the steady state and compatibility with the conservation of mass which is satisfied by the scheme. Indeed, one has
    \begin{equation}\label{eq:masscons}
        \ds\sum_{K\in\T} \mk  u_K^n=\ds\sum_{K\in\T} \mk  u_K^0=M^1,\quad \forall n\geq 0.
    \end{equation}
    The steady state associated to the scheme \eqref{scheme} 
    is given by 
    \be\label{Neumann:steadystate}
    u_K^{\infty}\ =\ \rho e^{-V_K}\,,\ \mbox{ with }\ \rho\ =\ M^1\,\left(\ds\sum_{K\in\T} \mk  e^{-V_K}\right)^{-1}.
    \ee
    
    In the case $\E_{\text{ext}}^D\neq\emptyset$, Assumption (A3) enforces the boundary conditions to be at thermal equilibrium, which means that there is a constant $\alpha^D$ such that for all $\sigma\in\E_{\text{ext}}^D$,
    \be\label{hyp:theq}
    \log u_\sigma^D \,+\,V_\sigma\ =\ \alpha^D\,.
    \ee
    Under this assumption, the steady-state has a similar form as in the case of pure no-flux boundary conditions. It is defined by 
    \be\label{theq:steadystate}
    u_K^{\infty}\ =\ \rho\, e^{-V_K}\,,\  \mbox{ with }\ \rho\ =\ \exp {\alpha^D}.
    \ee
    Let us remark that in both cases, as $V\in C^1(\overline{\Omega},\R)$, the discrete steady state is bounded by above and below. There are $M^{\infty}, m^{\infty}>0$ such that for all $K\in \T$
    \be\label{bound:steadystate}
    m^{\infty}\,\leq\, u_K^{\infty}\,\leq\, M^{\infty}\,.
    \ee

    \subsection{Discrete entropy estimates}
    Let $\Phi\in C^1(\R,\R)$ be a convex function satisfying $\Phi(1)=\Phi'(1)=0$. 
    We consider the discrete relative $\Phi$-entropy defined by 
    \be\label{def:entropy}
    {\mathbb E}_{\Phi}^n\ =\ \ds\sum_{K\in\T}\mk  u_K^{\infty} \Phi\left(\ds\frac{u_K^{n}}{u_K^{\infty}}\right)\,\quad \forall n\geq 0.
    \ee
    We show in the next proposition that if the discrete equilibrium is a Gibbs/thermal equilibrium, the scheme dissipates the discrete relative $\Phi$-entropies along time.
    \begin{prop}\label{prop:entropydissipation}
        Let us assume that either $\E_{\text{ext}}^D=\emptyset$ or $\E_{\text{ext}}^D\neq\emptyset$ with \eqref{hyp:theq}. We also assume that the scheme \eqref{scheme}-\eqref{def:ubarsig} has a solution $(\bu^n)_{n\geq 0}$ which is positive at each time step, namely $u_K^n>0$ for all $K\in\T$ and $n\geq 0$. Then the discrete relative $\Phi$-entropies defined by \eqref{def:entropy} are dissipated along time. Namely, for all $n\geq 0$ one has
        \be\label{ineq:entropydissipation}
        \ds\frac{{\mathbb E}_{\Phi}^{n+1}-{\mathbb E}_{\Phi}^n}{\Delta t} \,+\,{\mathbb I}_\Phi^{n+1} \ \leq\ 0\,,
        \ee
        with 
        \be\label{def:dissipation}
        {\mathbb I}_{\Phi}^{n+1}\,=\,\sum_{\sigma\in\E_{\text{int}}\cup \E_{\text{ext}}^D} \ts {\overline u}_{\sigma}^{n+1}\left(D_{K,\sigma} \log \frac{\bu^{n+1}}{\bu^{\infty}}\right)\left(D_{K,\sigma} \Phi'\left(\frac{\bu^{n+1}}{\bu^{\infty}}\right)\right)\ \geq\ 0.
        \ee
    \end{prop}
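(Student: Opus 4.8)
The plan is to test the scheme against $\Phi'(\bu^{n+1}/\bu^{\infty})$, use the convexity of $\Phi$ to bound the entropy increment from above, perform a discrete integration by parts, and then identify the resulting bulk and boundary terms by means of the Gibbs/thermal structure of the steady state. Throughout, write $w_K^{n+1} := u_K^{n+1}/u_K^{\infty}$, which is positive and finite by the positivity assumption together with \eqref{bound:steadystate}, and $w_\sigma^{n+1} := u_\sigma^D/u_\sigma^{\infty}$ on Dirichlet edges.

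\textbf{Step 1 (convexity).} Since $\Phi\in C^1(\R,\R)$ is convex, $\Phi(b)-\Phi(a)\le \Phi'(b)(b-a)$ for all $a,b>0$. Choosing $b=w_K^{n+1}$ and $a=w_K^n$, multiplying by $\mk\,u_K^{\infty}>0$ and summing over $K\in\T$ gives
\[
{\mathbb E}_{\Phi}^{n+1}-{\mathbb E}_{\Phi}^{n}\ \le\ \sum_{K\in\T}\mk\,\Phi'(w_K^{n+1})\,(u_K^{n+1}-u_K^n)\,.
\]
Dividing by $\Delta t$ and inserting \eqref{scheme_glob} yields
\[
\frac{{\mathbb E}_{\Phi}^{n+1}-{\mathbb E}_{\Phi}^{n}}{\Delta t}\ \le\ -\sum_{K\in\T}\Phi'(w_K^{n+1})\sum_{\sigma\in\E_K}\FKsig^{n+1}\,.
\]

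\textbf{Step 2 (discrete integration by parts).} Reorganize the double sum over $(K,\sigma)$ into a sum over edges, using conservativity of the fluxes on interior edges ($\FKsig^{n+1}=-\mathcal{F}_{L,\sigma}^{n+1}$ if $\sigma=K|L$), the fact that $\FKsig^{n+1}=0$ on Neumann edges by \eqref{uKsig}, and, on a Dirichlet edge, the splitting $\Phi'(w_K^{n+1})=(\Phi'(w_K^{n+1})-\Phi'(w_\sigma^{n+1}))+\Phi'(w_\sigma^{n+1})$. This produces
\[
-\sum_{K\in\T}\Phi'(w_K^{n+1})\sum_{\sigma\in\E_K}\FKsig^{n+1}
=\sum_{\sigma\in\Eint\cup\EextD}\FKsig^{n+1}\,D_{K,\sigma}\Phi'\!\left(\frac{\bu^{n+1}}{\bu^{\infty}}\right)
-\sum_{\sigma\in\EextD}\Phi'(w_\sigma^{n+1})\,\FKsig^{n+1}\,,
\]
where for an interior edge $K$ denotes either adjacent cell (each summand being invariant under $K\leftrightarrow L$ thanks to \eqref{hyp:g:cons+cons}). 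By Assumption (A3) and \eqref{theq:steadystate} one has $u_\sigma^{\infty}=u_\sigma^D$, hence $w_\sigma^{n+1}=1$ and $\Phi'(w_\sigma^{n+1})=\Phi'(1)=0$, so the last sum vanishes.

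\textbf{Step 3 (flux rewriting and conclusion).} In the pure Neumann case \eqref{Neumann:steadystate} and, under \eqref{hyp:theq}, in the Dirichlet case \eqref{theq:steadystate}, one has $\log u_K^{\infty}+V_K=\log\rho$ for every cell and $\log u_\sigma^D+V_\sigma=\log\rho$ on every Dirichlet edge, so $\log u_K^{n+1}+V_K=\log\rho+\log w_K^{n+1}$ and the constant $\log\rho$ is killed by $D_{K,\sigma}$. Thus \eqref{scheme_flux} becomes, for all $\sigma\in\Eint\cup\EextD$,
\[
\FKsig^{n+1}=-\ts\,\overline{u}_\sigma^{n+1}\,D_{K,\sigma}\log\!\frac{\bu^{n+1}}{\bu^{\infty}}\,.
\]
Plugging this into Step 2 gives exactly $\frac{{\mathbb E}_{\Phi}^{n+1}-{\mathbb E}_{\Phi}^{n}}{\Delta t}\le -{\mathbb I}_{\Phi}^{n+1}$, i.e. \eqref{ineq:entropydissipation}. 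Finally, each summand of ${\mathbb I}_{\Phi}^{n+1}$ is nonnegative: $\ts>0$, $\overline{u}_\sigma^{n+1}>0$ because $r$ is valued in $(0,+\infty)$ and $u_\sigma^D>0$, and both $t\mapsto\log t$ and $t\mapsto\Phi'(t)$ (the latter by convexity of $\Phi$) are nondecreasing, so $D_{K,\sigma}\log\frac{\bu^{n+1}}{\bu^{\infty}}$ and $D_{K,\sigma}\Phi'(\bu^{n+1}/\bu^{\infty})$ have the same sign; hence ${\mathbb I}_{\Phi}^{n+1}\ge 0$.

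\textbf{Expected main obstacle.} No step is genuinely deep. The only delicate point is the bookkeeping of the Dirichlet boundary terms in Step 2 and of the additive constant in Step 3: both are handled precisely by Assumption (A3), which simultaneously forces $\Phi'(w_\sigma^{n+1})=0$ and makes the flux rewriting valid uniformly up to the Dirichlet edges. In the pure Neumann case no boundary edge survives and the argument is a strict simplification.
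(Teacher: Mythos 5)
Your proof is correct and follows essentially the same route as the paper's: convexity of $\Phi$, multiplication of the scheme by $\Phi'(u_K^{n+1}/u_K^{\infty})$, summation and discrete integration by parts, rewriting of the flux through the Gibbs form $u_K^{\infty}=\rho e^{-V_K}$ of the steady state, and monotonicity of $\log$ and $\Phi'$ for the sign of ${\mathbb I}_\Phi^{n+1}$. Your Steps 2--3 merely make explicit the Dirichlet boundary bookkeeping (using $u_\sigma^{\infty}=u_\sigma^D$, hence $\Phi'(1)=0$) that the paper's proof leaves implicit.
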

    
    \begin{proof}
        Regardless of the hypothesis on $\E_{\text{ext}}^D$, the steady-state can be written as $u_K^{\infty}=\rho e^{-V_K}$ with $\rho\in (0,+\infty)$, as shown in \eqref{Neumann:steadystate} and \eqref{theq:steadystate}. Therefore, the numerical fluxes defined by \eqref{scheme_flux} rewrite as
        \[
        \FKsig^{n+1}\ =\ -\ts {\overline u}_{\sigma}^{n+1} \DKsig\left(\log (\bu^{n+1}/\bu^\infty)\right)\,,
        \]
        for all $K\in\T$,  $\sigma \in \E_K$ and $n\geq 0$. Besides, due to the convexity of $\Phi$, one has
        \[
        \ds{\mathbb E}_{\Phi}^{n+1}-{\mathbb E}_{\Phi}^n\ \leq\ \ds\sum_{K\in\T} \mk  (u_K^{n+1}-u_K^n) \Phi'(u_K^{n+1}/u_K^{\infty})\,.
        \]
        Then, multiplying the scheme \eqref{scheme_glob} by $\Phi'(u_K^{n+1}/u_K^{\infty})$, summing over $K\in\T$ and applying  a discrete integration by parts yields the expected result. Finally, from the monotonicity of the functions $\log$ and $\Phi'$ we infer that ${\mathbb I}_{\Phi}^{n+1}$ is non-negative.
    \end{proof}
    
    The first consequence of Proposition \ref{prop:entropydissipation} is the decay of the relative $\Phi$-entropy, so that 
    \begin{equation}\label{entropybound}
        {\mathbb E}_{\Phi}^n\ \leq\ {\mathbb E}_{\Phi}^0, \quad \text{for all}\ n\geq 0\,. 
    \end{equation}
    Then, we deduce some uniform $L^{\infty}$-bounds on the solution to the scheme \eqref{scheme}.
    
    \begin{prop}\label{prop:Linfbounds}
        Under the assumptions of Proposition~\ref{prop:entropydissipation}, one has
        \be\label{est:Linfbounds}
        m^{\infty}\min \left(1,\ds\min_{K\in\T} \frac{u_K^0}{u_K^{\infty}}\right)\leq u_K^n\leq M^{\infty}\max \left(1,\ds\max_{K\in\T} \frac{u_K^0}{u_K^{\infty}}\right)\,,
        \ee
        for all $K\in\T$ and $n\geq 0$.
    \end{prop}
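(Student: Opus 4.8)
The plan is to deduce both bounds from the entropy decay estimate \eqref{entropybound} by testing Proposition~\ref{prop:entropydissipation} against two well-chosen convex functions $\Phi$, each tailored so that the \emph{initial} relative entropy vanishes; the monotonicity of $n\mapsto\mathbb E_\Phi^n$ then traps $u_K^n$ inside a fixed box for all $n$. Throughout, write $a_+=\max(a,0)$, and recall that the mesh is finite, that $u_K^0>0$ by the standing hypothesis of Proposition~\ref{prop:entropydissipation}, and that $u_K^\infty$ obeys \eqref{bound:steadystate}, so that the ratios $u_K^0/u_K^\infty$ are well defined, positive, and finitely many.

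First I would prove the upper bound. Set $\kappa:=\max\!\big(1,\ \max_{K\in\T}u_K^0/u_K^\infty\big)\ge 1$ and consider $\Phi_\kappa(s)=\big((s-\kappa)_+\big)^2$. This $\Phi_\kappa$ is $C^1(\R,\R)$ with $\Phi_\kappa'(s)=2(s-\kappa)_+$, it is convex, and since $\kappa\ge 1$ it satisfies $\Phi_\kappa(1)=\Phi_\kappa'(1)=0$; hence Proposition~\ref{prop:entropydissipation}, and therefore \eqref{entropybound}, applies with $\Phi=\Phi_\kappa$. By the very definition of $\kappa$ one has $u_K^0\le\kappa\,u_K^\infty$ for every $K\in\T$, so $\mathbb E_{\Phi_\kappa}^0=0$ by \eqref{def:entropy}. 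Since $\Phi_\kappa\ge 0$, $\mk>0$ and $u_K^\infty>0$, we have $\mathbb E_{\Phi_\kappa}^n\ge 0$ for all $n$; combined with $\mathbb E_{\Phi_\kappa}^n\le\mathbb E_{\Phi_\kappa}^0=0$ from \eqref{entropybound}, this forces $\mathbb E_{\Phi_\kappa}^n=0$ for every $n$. Each summand being nonnegative, every term must vanish, i.e. $(u_K^n/u_K^\infty-\kappa)_+=0$, so $u_K^n\le\kappa\,u_K^\infty\le M^\infty\kappa$ for all $K$ and $n$ by \eqref{bound:steadystate}, which is exactly the upper bound in \eqref{est:Linfbounds}.

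The lower bound is obtained by the symmetric argument. Put $\mu:=\min\!\big(1,\ \min_{K\in\T}u_K^0/u_K^\infty\big)\in(0,1]$, where $\mu>0$ precisely because $u_K^0>0$ for all $K$, and take $\Phi_\mu(s)=\big((\mu-s)_+\big)^2$, which is again $C^1$, convex, and vanishes together with its derivative at $1$ since $\mu\le 1$. As above $\mathbb E_{\Phi_\mu}^0=0$, hence $\mathbb E_{\Phi_\mu}^n=0$ for all $n$ by \eqref{entropybound}, hence $(\mu-u_K^n/u_K^\infty)_+=0$, i.e. $u_K^n\ge\mu\,u_K^\infty\ge m^\infty\mu$ for all $K$ and $n$, which completes \eqref{est:Linfbounds}.

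I do not expect a genuine obstacle here: the one point that requires care is the admissibility of the two test functions for Proposition~\ref{prop:entropydissipation}, which is exactly why the clamping is done at $\kappa\ge 1$ and at $\mu\le 1$ — clamping directly at $\max_K u_K^0/u_K^\infty$ or $\min_K u_K^0/u_K^\infty$ would in general destroy the normalization $\Phi(1)=\Phi'(1)=0$. The positivity assumption $u_K^n>0$ (in particular at $n=0$) is likewise essential: it makes $\mu>0$ and, more basically, it is what renders the relative entropies and the fluxes in Proposition~\ref{prop:entropydissipation} well defined. Everything else is immediate from the nonnegativity of $\Phi$ and the monotone decay \eqref{entropybound}.
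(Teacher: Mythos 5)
Your proof is correct and takes essentially the same route as the paper: apply the entropy decay \eqref{entropybound} to a clamped convex $\Phi$ whose initial relative entropy vanishes (the paper uses $\Phi(x)=(x-M)^+$ and $\Phi(x)=(x-m)^-$ with exactly your $M=\kappa$ and $m=\mu$), so that $0\leq{\mathbb E}_{\Phi}^n\leq{\mathbb E}_{\Phi}^0=0$ forces every summand to vanish and yields the two bounds. Your squared variants $\bigl((s-\kappa)_+\bigr)^2$ and $\bigl((\mu-s)_+\bigr)^2$ have the minor additional merit of being genuinely $C^1$, as the framework of Proposition~\ref{prop:entropydissipation} formally requires.
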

    \begin{proof}
        The proof is similar to the proof of Lemma 4.1 in \cite{filbet_herda_2017}. It is a direct consequence of \eqref{entropybound} applied with specific choices for the function $\Phi$. Indeed, just use $\Phi(x)=(x-M)^+$ and $\Phi(x)=(x-m)^-$ with  $M=\max(1,\ds\mathrm{max}_{K} u_K^0/u_K^{\infty})$ and $m=\min(1,\ds\mathrm{min}_{K} u_K^0/u_K^{\infty})$ so that in both cases $0\leq{\mathbb E}_{\Phi}^n\leq{\mathbb E}_{\Phi}^0=0$ for all $n\geq 0$, which leads respectively to the upper bound and the lower bound in \eqref{est:Linfbounds}.
    \end{proof}

    \subsection{Existence of a solution to the scheme}
    
    The numerical scheme \eqref{scheme}-\eqref{def:ubarsig} amounts at each time step to solve a nonlinear system of equations. The existence of a solution to the scheme is stated in Proposition~\ref{theo:existence}. It is a direct consequence of the \emph{a priori} $L^{\infty}$-estimates given in Proposition \ref{prop:Linfbounds}. The proof relies on a topological degree argument/a Leray-Schauder's fixed point theorem \cite{leray_schauder_1934, deimling_1985_nonlinear, droniou_2018_gradient}. It will be omitted here.
    
    \begin{prop}\label{theo:existence}
        Let us assume that either $\E_{\text{ext}}^D=\emptyset$ or $\E_{\text{ext}}^D\neq\emptyset$ with \eqref{hyp:theq}. We also assume that the initial condition satisfies \eqref{hyp:ci}. Then, the scheme \eqref{scheme}-\eqref{def:ubarsig} has a solution $(\bu^n)$ for all $n\geq 0$, which satisfies the uniform $L^{\infty}$-bounds \eqref{est:Linfbounds}. 
    \end{prop}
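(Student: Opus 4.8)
The plan is to proceed by induction on the time level $n$, the case $n=0$ being guaranteed by the assumption~\eqref{hyp:ci} on the initial data. So fix $n\geq 0$, assume that $\bu^n$ is already known, positive, and satisfies $m^\infty m_n\leq u_K^n\leq M^\infty M_n$ for all $K\in\T$, where $m_n=\min(1,\min_{K}u_K^n/u_K^\infty)>0$ and $M_n=\max(1,\max_{K}u_K^n/u_K^\infty)$, and look for $\bu^{n+1}$. Substituting the Dirichlet data~\eqref{scheme_CLdir} into~\eqref{scheme_flux}, finding $\bu^{n+1}$ amounts to finding a zero of the map $F:(0,+\infty)^{\theta}\to\R^{\theta}$ whose components are
\[
F_K(\bu)\ =\ \mk\,\frac{u_K-u_K^n}{\Delta t}\ -\ \sum_{\sigma\in\E_K}\ts\,r_\sigma(\bu)\,\DKsig(\log\bu+\bV),\qquad K\in\T,
\]
where $r_\sigma(\bu)$ denotes the relevant mean value from~\eqref{def:ubarsig}. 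Note that $F$ is continuous on the positive cone (using the continuity of $r$, which holds for every choice appearing in~\eqref{eq:ineg_r}), but it is not even defined when some $u_K$ vanishes; hence I would apply a topological degree argument — a Leray--Schauder fixed point theorem would do equally well — on a bounded open set $\mathcal O$ whose closure is contained in $(0,+\infty)^{\theta}$.

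To this end I would introduce, for $\lambda\in[0,1]$, the homotopy
\[
H_K(\lambda,\bu)\ =\ \mk\,\frac{u_K-u_K^n}{\Delta t}\ -\ \lambda\sum_{\sigma\in\E_K}\ts\,r_\sigma(\bu)\,\DKsig(\log\bu+\bV),\qquad K\in\T,
\]
so that $H(1,\cdot)=F$, while $H(0,\cdot)$ is the affine map $\bu\mapsto(\mk(u_K-u_K^n)/\Delta t)_{K\in\T}$, whose unique zero is $\bu^n$. The decisive observation is that, for $\lambda\in(0,1]$, the equation $H(\lambda,\bu)=0$ is exactly the scheme~\eqref{scheme_glob}--\eqref{scheme_flux} for the unknown $\bu^{n+1}=\bu$, with the time step $\Delta t$ replaced by $\lambda\Delta t$. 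Since the proof of Proposition~\ref{prop:entropydissipation} makes no use of the smallness of the time step, any positive zero $\bu$ of $H(\lambda,\cdot)$ must satisfy ${\mathbb E}_{\Phi}(\bu)\leq{\mathbb E}_{\Phi}(\bu^n)$ for every admissible $\Phi$; choosing $\Phi(x)=(x-M_n)^+$ and $\Phi(x)=(x-m_n)^-$ exactly as in the proof of Proposition~\ref{prop:Linfbounds} then yields the \emph{a priori} bounds $m^\infty m_n\leq u_K\leq M^\infty M_n$ for all $K\in\T$, uniformly with respect to $\lambda$. Accordingly I would take $\mathcal O=\prod_{K\in\T}(\tfrac12 m^\infty m_n,\,2M^\infty M_n)$, which is bounded, has closure inside $(0,+\infty)^{\theta}$, and contains $\bu^n$ in its interior.

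With this choice, for every $\lambda\in(0,1]$ any zero of $H(\lambda,\cdot)$ lies in $\prod_{K\in\T}[m^\infty m_n,M^\infty M_n]\subset\mathcal O$, hence not on $\partial\mathcal O$, and for $\lambda=0$ the only zero is $\bu^n\in\mathcal O$; thus $0\notin H(\lambda,\partial\mathcal O)$ for all $\lambda\in[0,1]$. As $H$ is continuous on $[0,1]\times\overline{\mathcal O}$, homotopy invariance of the degree gives
\[
\deg(F,\mathcal O,0)\ =\ \deg(H(0,\cdot),\mathcal O,0)\ =\ \mathrm{sign}\,\det\!\big(\diag(\mk/\Delta t)_{K\in\T}\big)\ =\ 1\ \neq\ 0.
\]
Hence $F$ has a zero $\bu^{n+1}\in\mathcal O$, which is positive and, by the very same truncated-entropy argument, satisfies $m^\infty m_n\leq u_K^{n+1}\leq M^\infty M_n$ for all $K$; an immediate induction on $n$ then upgrades these one-step bounds to the global estimate~\eqref{est:Linfbounds}, completing the proof.

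The one genuinely delicate point is the derivation of \emph{a priori} bounds that are uniform in the homotopy parameter and keep the iterates away from $0$ and $+\infty$; this is precisely what the reinterpretation of $H(\lambda,\cdot)=0$ as the scheme with time step $\lambda\Delta t$, combined with Propositions~\ref{prop:entropydissipation} and~\ref{prop:Linfbounds}, provides. Working on a domain $\mathcal O$ with $\overline{\mathcal O}\subset(0,+\infty)^{\theta}$ circumvents the singularity of $\log$ at the origin, and the applicability of degree theory only requires the elementary continuity of the numerical flux, i.e.\ of $r$.
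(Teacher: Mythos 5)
Your argument is correct and follows exactly the route the paper indicates but omits: a topological degree (or Leray--Schauder) argument whose $\lambda$-uniform \emph{a priori} bounds come from the entropy dissipation of Proposition~\ref{prop:entropydissipation} and the truncated entropies of Proposition~\ref{prop:Linfbounds}, the key observation being that $H(\lambda,\cdot)=0$ is the scheme with time step $\lambda\Delta t$. Note only that the truncated-entropy step in fact yields the ratio bounds $m_n\leq u_K^{n+1}/u_K^\infty\leq M_n$, which is what makes the induction give the global estimate \eqref{est:Linfbounds} without degradation.
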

    
    \br
    The lower bound in \eqref{est:Linfbounds} is positive if there is a positive constant $m_0$ such that
    \begin{equation}\label{hyp:ci}
        u_K^0\,\geq\, m_0\,>\,0\,,\ \mbox{for all}\ K\in\T,
    \end{equation}
    which is not necessarily ensured by the assumption (A1) on the initial data $u_0$. If the initial data $u_0$ has some vanishing zones, such that 
    \eqref{hyp:ci} is not satisfied, it is still possible to obtain a positive lower bound at each time step $n\geq 1$. But this bound will depend on the discretization parameters. Instead of the bound of the entropy, the proof uses the control on the dissipation of entropy also provided by Proposition \ref{prop:entropydissipation}.  We refer to \cite[Lemma 3.5]{CCHK} for the details of the proof in this case. 
    This weaker estimate is sufficient to show existence of a solution $\bu^n$ to the scheme.
    \er
    
    \section{Large time behavior of the nonlinear TPFA finite volume schemes}\label{sec:TPFA_LTB}
    
    In this section, we establish the exponential decay of the solution $(\bu^n)_{n\geq 0}$ to the scheme \eqref{scheme}-\eqref{def:ubarsig}, discretizing  \eqref{FPmodel} in the particular case $\bm{\varLambda} = \bm{I}$, towards the thermal equilibrium ${\bu}^\infty$. To proceed, we first prove the exponential decay of some relative entropies ${\mathbb E}_\Phi^n$ towards $0$. We shall focus on the Boltzmann-Gibbs entropy generated by
    \begin{equation}\label{eq:phi1}
        \Phi_1(s)\ =\ s\log s -s +1\,,
    \end{equation}
    and the Tsallis entropies generated by
    \begin{equation}\label{eq:phip}
        \Phi_p(s)\ =\ \frac{s^p-ps}{p-1}+1\,,
    \end{equation}
    for $p\in(1,2]$. The methodology consists in establishing a so-called entropy-entropy dissipation inequality. More precisely, one wants to show the existence of some $\nu>0$ such that 
    \be\label{ineq:EI}
    {\mathbb I}_\Phi^{n+1}\ \geq\ \nu {\mathbb E}_\Phi^{n+1},\quad \forall n\geq 0.
    \ee
    This is done thanks to discrete functional inequalities. In the case of complete Neumann (no-flux) boundary conditions we need new inequalities that are proved in Section~\ref{sec:logSob}. Depending on the parameter $p$, we will use a logarithmic Sobolev inequality ($p=1$), a Beckner inequality ($p\in(1,2)$) or a Poincaré-Wirtinger inequality ($p=2$). In the case of mixed Dirichlet-Neumann boundary conditions, we require a more classical discrete Poincaré inequality.

    Once one obtains \eqref{ineq:EI}, due to the entropy/entropy dissipation inequality \eqref{ineq:entropydissipation}, we get that 
    $
    {\mathbb E}_\Phi^{n}\leq(1+\nu \Delta t)^{-n} {\mathbb E}_\Phi^{0}$ for all $n\geq0$. 
    Thus we deduce the following weaker but maybe more explicit bound. For any $k>0$, if $\Delta t\leq k$, then 
    $
    {\mathbb E}_\Phi^{n}\ \leq\ e^{-{\tilde\nu} t^n}{\mathbb E}_\Phi^{0}
    $
    where the rate is given by $\tilde\nu=\log (1+\nu k)/k$.
    
    \subsection{The case of Neumann boundary conditions}
    
    In this section we show the exponential decay towards the thermal equilibrium in the case of Neumann (no-flux) boundary conditions.
    
    \bt\label{th:expdec_TPFA_N}
    Let us assume that $\E_{\text{ext}}^D=\emptyset$. Then for all $p\in[1,2]$, there exists $\nu_p$ depending only on the domain $\Omega$, the regularity of the mesh $\zeta$, the mass of the initial condition $u_0$ (only in the case $p=1$) and the potential $V$ ({\em via} the steady state ${\bu}^\infty$), such that, 
    \be\label{expdec_Ep}
    {\mathbb E}_p^{n}\ \leq\ (1\,+\,\nu_p \Delta t)^{-n}\, {\mathbb E}_p^{0}\,,\quad \forall n\geq 0\,.
    \ee
    Thus for any $k>0$, if $\Delta t\leq k$, one has for all $n\geq0$ that ${\mathbb E}_p^{n}\leq e^{-{\tilde \nu_p} t^n}{\mathbb E}_p^{0}$ with ${\tilde\nu}_p=\log (1+\nu_p k)/k$. 
    \et
    
    \begin{proof}
        By definition \eqref{def:dissipation}, the discrete entropy dissipation is given by
        \[
        {\mathbb I}_p^{n+1}=\ds\sum_{\sigma\in\E_{\text{int}}}\tau_\sigma {\overline u}_{\sigma}^{n+1} \left(D_\sigma \log(\bu^{n+1}/\bu^{\infty})\right)\left(D_\sigma \Phi_p'(\bu^{n+1}/\bu^{\infty})\right)\,,
        \]
        for all $n\geq0$. It can be seen as the discrete counterpart of  
        \[
        \int_\Omega u\nabla \log (u/u^{\infty})\cdot\nabla \Phi_p'(u/u^{\infty}) \dd\bx = \frac 4 p \int_\Omega u^{\infty}|\nabla (u/u^{\infty})^{p/2}|^2 \dd\bx.
        \]
        Let us introduce a discrete counterpart of this last quantity. For all $n\geq 0$, let
        \[
        {\widehat {\mathbb I}}_p^{n+1}\,=\,\frac{4}{p}\ds\sum_{\sigma\in\E_{\text{int}}}\tau_\sigma {\overline u}_{\sigma}^{\infty} \left(D_\sigma (\bu^{n+1}/\bu^{\infty})^{p/2}\right)^2\,,
        \]
        with
        \[
        {\overline u}_{\sigma}^{\infty}\,=\,\min (u_K^{\infty},u_L^{\infty})\ \mbox{for}\ \sigma=K|L\,.
        \]
        Let us prove now that 
        \be\label{ineg:I_Ihat}
        {\widehat {\mathbb I}}_p^{n+1}\leq {{\mathbb I}}_p^{n+1},\quad  \forall n\geq 0.
        \ee
        The proof is based on two elementary inequalities. Let $x,y>0$. The first inequality is 
        \[
        4\vert \sqrt{x}-\sqrt{y}\vert^2\leq (x-y)(\log x-\log y).
        \]
        The second one is given by
        \[
        (\alpha+\beta)^2(y^\alpha-x^\alpha)(y^\beta-x^\beta)\geq 4\alpha\beta\left( y^{(\alpha+\beta)/2}-x^{(\alpha+\beta)/2}\right)^2\] 
        and holds for all  $\alpha,\beta >0$. We are interested in the case $\alpha=p-1$ and $\beta =1$. The reader may find a proof in \cite[Lemma 19]{chainais_jungel_schuchnigg_2016}. Altogether, it yields that for all $p\in[1,2]$, one has
        \begin{equation}\label{eq:ineg_func}
            \frac{4}{p}(x^{p/2}-y^{p/2})^2\ \leq\ (x-y)(\Phi_p'(x)-\Phi_p'(y))\,.
        \end{equation}
        As $r$ satisfies \eqref{hyp:g:bounds}, it implies that
        \[
        \frac{4}{p}(x^{p/2}-y^{p/2})^2\leq r(x,y)(\log x-\log y)(\Phi_p'(x)-\Phi_p'(y))
        \]
        for all $x,y>0$. Therefore, for all edge $\sigma\in\E_{\text{int}}$ with $\sigma =K|L$, we have 
        \be \label{I_Ihat_1}
        \frac{4}{p}\left(D_{\sigma}\left(\frac{\bu^{n+1}}{\bu^{\infty}}\right)^{p/2}\right)^2\leq r\left(\frac{u_K^{n+1}}{u_K^{\infty}},\frac{u_L^{n+1}}{u_L^{\infty}}\right)\left( D_{\sigma} \log \left(\frac{\bu^{n+1}}{\bu^{\infty}}\right)\right)\left( D_{\sigma} \Phi_p'\left(\frac{\bu^{n+1}}{\bu^{\infty}}\right)\right).
        \ee
        But thanks to the homogeneity \eqref{hyp:g:homogeneity} of $r$, we have
        \be \label{I_Ihat_2}
        {\overline u}_{\sigma}^{\infty}r\left(\frac{u_K^{n+1}}{u_K^{\infty}},\frac{u_L^{n+1}}{u_L^{\infty}}\right)=
        r\left({\overline u}_{\sigma}^{\infty}\frac{u_K^{n+1}}{u_K^{\infty}},{\overline u}_{\sigma}^{\infty}\frac{u_L^{n+1}}{u_L^{\infty}}\right)\leq r(u_K^{n+1},u_L^{n+1}),
        \ee
        because of the definition of ${\overline u}_{\sigma}^{\infty}$. We then deduce \eqref{ineg:I_Ihat} from \eqref{I_Ihat_1} and \eqref{I_Ihat_2}.

        In order to establish \eqref{ineq:EI}, we just need to prove that ${\widehat{\mathbb I}}_p^{n+1}\geq \nu_p\,{\mathbb E}_p^{n+1}$ for all $n\geq 0$. This relation is a consequence of the discrete log-Sobolev and Beckner inequalities stated in Proposition \ref{prop:func_ineq}. Indeed, let us apply \eqref{ineg:logSob_2} to $\bv=\bu^{n+1}$ and $\bv^{\infty}=\bu^{\infty}$. We get
        $
        {\mathbb E}_1^{n+1} \leq C_{LS} \left(M^\infty\,M^1\right)^{\frac{1}{2}}
        {\widehat {\mathbb I}}_1^{n+1}/(\zeta^2m^\infty).
        $
        It yields 
        \[
        \nu_1= \frac{\zeta^2}{C_{LS}}\,\frac{m^\infty}{(M^\infty\,M^1)^{\frac{1}{2}}}\,.
        \]
        Similarly, by applying \eqref{ineg:Beck_2} one gets the desired inequality with 
        \[
        \nu_p = (p-1)\,\frac{\zeta}{C_{B}}\,\frac{m^\infty}{M^\infty}\,.
        \]
        It concludes the proof.
    \end{proof}
    
    \begin{cor}
        Under the assumptions of Theorem~\ref{th:expdec_TPFA_N}, one has
        \begin{equation}\label{expdec_L2_N}
            \sum_{K\in\T}m_K\,|u_K^n-u_K^\infty|^2\ \leq\ \mathbb{E}_2^0\,M^\infty\,e^{-\tilde{\nu}_2\,t^n}
        \end{equation}
        and
        \begin{equation}\label{expdec_L1_N}
            \left(\sum_{K\in\T}m_K\,|u_K^n-u_K^\infty|\right)^2\ \leq\ 2\,\mathbb{E}_1^0\,M^1\,e^{-\tilde{\nu}_1\,t^n}\,.
        \end{equation}
    \end{cor}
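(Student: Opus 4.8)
The plan is to derive both bounds directly from the exponential decay of the relative entropies $\mathbb{E}_2^n$ and $\mathbb{E}_1^n$ supplied by Theorem~\ref{th:expdec_TPFA_N}, by controlling each weighted $L^p$ discrepancy between $\bu^n$ and $\bu^\infty$ by the corresponding entropy. In both cases the estimate is a purely algebraic comparison at fixed time $n$, followed by an application of \eqref{expdec_Ep}.

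For \eqref{expdec_L2_N}, I would specialize \eqref{eq:phip} to $p=2$, which gives $\Phi_2(s)=(s-1)^2$. Plugging this into the definition \eqref{def:entropy} yields
\[
\mathbb{E}_2^n=\sum_{K\in\T}m_K\,u_K^{\infty}\left(\frac{u_K^n}{u_K^{\infty}}-1\right)^2=\sum_{K\in\T}m_K\,\frac{|u_K^n-u_K^{\infty}|^2}{u_K^{\infty}}\,.
\]
Since $u_K^{\infty}\leq M^{\infty}$ for every $K\in\T$ by \eqref{bound:steadystate}, this immediately gives $\sum_{K\in\T}m_K\,|u_K^n-u_K^{\infty}|^2\leq M^{\infty}\,\mathbb{E}_2^n$, and \eqref{expdec_L2_N} follows from the decay $\mathbb{E}_2^n\leq e^{-\tilde{\nu}_2 t^n}\mathbb{E}_2^0$ contained in Theorem~\ref{th:expdec_TPFA_N} (under the standing assumption $\Delta t\leq k$).

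For \eqref{expdec_L1_N} the appropriate tool is a discrete Csiszár--Kullback--Pinsker type inequality. The first step is to rewrite the Boltzmann entropy using mass conservation: by \eqref{eq:masscons} and \eqref{Neumann:steadystate} one has $\sum_{K\in\T}m_K u_K^n=\sum_{K\in\T}m_K u_K^{\infty}=M^1$, so the linear terms in $\Phi_1(s)=s\log s-s+1$ cancel and
\[
\mathbb{E}_1^n=\sum_{K\in\T}m_K\,u_K^n\log\frac{u_K^n}{u_K^{\infty}}\,.
\]
I would then combine this with the elementary pointwise inequality $(a-b)^2\leq\tfrac23\,b\,(a+2b)\,\Phi_1(a/b)$, valid for all $a,b>0$ (equivalently $\Phi_1(s)\geq 3(s-1)^2/(2s+4)$), applied with $a=u_K^n$, $b=u_K^{\infty}$, together with a Cauchy--Schwarz step:
\[
\left(\sum_{K\in\T}m_K\,|u_K^n-u_K^{\infty}|\right)^2\leq\frac23\left(\sum_{K\in\T}m_K(u_K^n+2u_K^{\infty})\right)\mathbb{E}_1^n=2\,M^1\,\mathbb{E}_1^n\,,
\]
and \eqref{expdec_L1_N} follows by invoking once more the decay $\mathbb{E}_1^n\leq e^{-\tilde{\nu}_1 t^n}\mathbb{E}_1^0$. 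The computations are routine; the only genuinely structural point is the cancellation leading to the Kullback--Leibler form of $\mathbb{E}_1^n$, which relies on $\bu^n$ and $\bu^{\infty}$ carrying the same mass $M^1$ — precisely where the pure no-flux setting $\E_{\text{ext}}^D=\emptyset$ is used — so I do not expect any real obstacle here beyond this bookkeeping and the verification of the scalar inequality for $\Phi_1$.
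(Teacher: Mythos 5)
Your proposal is correct and follows essentially the same route as the paper: the $L^2$ bound comes from $\Phi_2(s)=(s-1)^2$, the bound $u_K^\infty\leq M^\infty$ and \eqref{expdec_Ep}, while the $L^1$ bound is the Csiszár--Kullback argument. The only difference is that you re-derive the Csiszár--Kullback inequality inline (via $\Phi_1(s)\geq 3(s-1)^2/(2s+4)$ and Cauchy--Schwarz, using mass conservation to evaluate $\sum_K m_K(u_K^n+2u_K^\infty)=3M^1$), whereas the paper simply invokes its Lemma~\ref{lem:CK} with the probability measure proportional to $u^\infty$ and $g=u^n/u^\infty$ -- the underlying computation is identical.
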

    \begin{proof}
        The decay \eqref{expdec_L2_N} in $L^2$-norm (\emph{resp.} \eqref{expdec_L1_N} in $L^1$-norm )  is just a consequence of \eqref{expdec_Ep} and the Cauchy-Schwarz inequality (\emph{resp.} the Csiszár-Kullback inequality, see Lemma~\ref{lem:CK}).
    \end{proof}
    \subsection{The case of Dirichlet-Neumann boundary conditions}
    
    In this section we show the exponential decay towards the thermal equilibrium in the case of mixed Dirichlet-Neumann boundary conditions.
    
    \bt\label{th:expdec_TPFA_DN}
    Let us assume that $\E_{\text{ext}}^D\neq\emptyset$. Then, for all $p\in (1,2]$, there exists $\kappa_p$ depending only on $p$, $\Omega$, $\Gamma^D$, $\zeta$, the boundary condition $u^D$ and the potential $V$, such that, for any $k>0$, if $\Delta t\leq k$, 
    \be\label{expdec_Ep_DN}
    {\mathbb E}_p^{n}\ \leq\ (1\,+\,\kappa_p \Delta t)^{-n}\, {\mathbb E}_p^{0}\,,\quad \forall n\geq 0\,.
    \ee
    Thus for any $k>0$, if $\Delta t\leq k$, one has for all $n\geq0$ that ${\mathbb E}_p^{n}\leq e^{-{\tilde \kappa_p} t^n}{\mathbb E}_p^{0}$ with ${\tilde\kappa}_p=\log (1+\kappa_p k)/k$. 
    \et
    
    \begin{proof}
        The proof begins in the same fashion as in the case of Neumann boundary conditions. The expressions of the dissipation slightly change, as some boundary terms are taken into account. However with the same arguments one still has ${\widehat {\mathbb I}}_p^{n+1}\leq {{\mathbb I}}_p^{n+1}$ with 
        \[
        {\mathbb I}_p^{n+1}=\ds\sum_{\sigma\in\E_{\text{int}}\cup \E_{\text{ext}}^D}\tau_\sigma {\overline u}_{\sigma}^{n+1} \left(D_\sigma \log(\bu^{n+1}/\bu^{\infty})\right)\left(D_\sigma \Phi_p'(\bu^{n+1}/\bu^{\infty})\right)\,,
        \]
        and 
        \[
        {\widehat {\mathbb I}}_p^{n+1}\,=\,\frac{4}{p}\ds\sum_{\sigma\in\E_{\text{int}}\cup \E_{\text{ext}}^D}\tau_\sigma {\overline u}_{\sigma}^{\infty} \left(D_\sigma (\bu^{n+1}/\bu^{\infty})^{p/2}\right)^2\,.
        \]
        Then the proof differs as we are going to use a different functional inequality in order to establish a relation between ${\mathbb E}_p^{n+1}$ and ${\mathbb I}_p^{n+1}$ of the form \eqref{ineq:EI}. Indeed, we apply a discrete Poincar\'e inequality (see for instance \cite[Theorem 4.3]{bessemoulin_chainais_filbet_2015}). It ensures the existence of a constant $C_P$ depending only on $\Gamma^D$ and $\Omega$, such that 
        $$
        \ds\sum_{K\in\T} \mk \left(\left(\frac{u_K^n}{u_K^\infty}\right)^{\frac{p}{2}}-1\right)^2\leq \frac{(C_{P})^2}{\zeta}\ds\sum_{\sigma\in\E_{\text{int}}\cup \E_{\text{ext}}^D}\tau_\sigma \left(D_\sigma \left(\frac{\bu^{n+1}}{\bu^{\infty}}\right)^{\frac{p}{2}}\right)^2.
        $$
        Therefore, using the bounds \eqref{bound:steadystate}, we obtain:
        $$
        {\mathbb I}_p^{n+1}\geq \frac{4}{p}\frac{\zeta}{(C_P)^2}\frac{m^\infty}{(M^{\infty})^p}\ds\sum_{K\in\T} \mk \left( (u_K^{n+1})^{\frac{p}{2}}- (u_K^{\infty})^{\frac{p}{2}}\right)^2.
        $$
        But, for all $p\in (1,2]$, we have the following inequality, whose proof is left to the reader,
        $$
        (x^{p/2}-y^{p/2})^2\geq x^p-y^p-py^{p-1}(x-y)\,,
        $$
        for all $x,y>0$. It yields
        $$
        \ds\sum_{K\in\T} \mk \left( (u_K^{n+1})^{\frac{p}{2}}- (u_K^{\infty})^{\frac{p}{2}}\right)^2 \geq (p-1) (m^\infty)^{p-1}{\mathbb E}_p^{n+1}
        $$
        and finally ${\mathbb I}_p^{n+1}\geq \kappa_p {\mathbb E}_p^{n+1}$ with 
        \begin{equation}\label{eq:kappap}
            \kappa_p=\frac{4(p-1)}{p}\frac{\zeta}{(C_P)^2}\left(\frac{m^\infty}{M^{\infty}}\right)^p
        \end{equation}
        and it concludes the proof of Theorem \ref{th:expdec_TPFA_DN}.
    \end{proof}
    \begin{cor}
        Under the assumptions of Theorem~\ref{th:expdec_TPFA_DN}, one has
        \begin{equation}\label{expdec_L2_DN}
            \sum_{K\in\T}m_K\,|u_K^n-u_K^\infty|^2\ \leq\ \mathbb{E}_2^0\,M^\infty\,e^{-\tilde{\kappa}_2\,t^n}\,.
        \end{equation}
    \end{cor}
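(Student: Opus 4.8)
The plan is to deduce this $L^2$-decay estimate directly from Theorem~\ref{th:expdec_TPFA_DN} specialized to $p=2$, exactly as the corresponding corollary in the Neumann case is obtained. First I would unwind the definition of the Tsallis entropy generator \eqref{eq:phip} at $p=2$: one has $\Phi_2(s)=s^2-2s+1=(s-1)^2$, so that the discrete relative $\Phi_2$-entropy is nothing but a weighted discrete $L^2$ distance to equilibrium,
\[
{\mathbb E}_2^n\ =\ \sum_{K\in\T} m_K\, u_K^\infty\left(\frac{u_K^n}{u_K^\infty}-1\right)^2\ =\ \sum_{K\in\T} m_K\,\frac{(u_K^n-u_K^\infty)^2}{u_K^\infty}.
\]

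Next I would use the uniform upper bound on the discrete steady state recalled in \eqref{bound:steadystate}, namely $u_K^\infty\le M^\infty$ for all $K\in\T$. Writing $(u_K^n-u_K^\infty)^2=u_K^\infty\cdot u_K^\infty\,(u_K^n/u_K^\infty-1)^2$ and bounding one factor $u_K^\infty$ by $M^\infty$ termwise gives
\[
\sum_{K\in\T} m_K\,|u_K^n-u_K^\infty|^2\ \le\ M^\infty\,{\mathbb E}_2^n,\qquad \forall\, n\ge 0.
\]
Finally I would invoke the exponential decay of the entropy furnished by Theorem~\ref{th:expdec_TPFA_DN} with $p=2$, i.e. (under $\Delta t\le k$) the bound ${\mathbb E}_2^n\le e^{-\tilde\kappa_2 t^n}\,{\mathbb E}_2^0$ coming from \eqref{expdec_Ep_DN}, and combine it with the previous display to conclude.

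There is no genuine obstacle here: the argument is a one-line consequence of the already-established entropy decay, and in particular no functional inequality is needed at this stage. The only points deserving (minimal) care are the identification of ${\mathbb E}_2^n$ with the $u_K^\infty$-weighted quadratic functional and keeping the inequality in the right direction when the weight $u_K^\infty$ is replaced by its uniform upper bound $M^\infty$ from \eqref{bound:steadystate}; one should also remember that the rate $\tilde\kappa_2$ carries the dependence on the threshold $k$ on the time step inherited from Theorem~\ref{th:expdec_TPFA_DN}.
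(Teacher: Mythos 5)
Your proposal is correct and follows essentially the same route as the paper: the paper deduces this corollary (as in the Neumann case) directly from the entropy decay of Theorem~\ref{th:expdec_TPFA_DN} with $p=2$, using $\Phi_2(s)=(s-1)^2$ so that $\mathbb{E}_2^n$ is the $1/u_K^\infty$-weighted squared distance to equilibrium, and then bounding the weight via $u_K^\infty\le M^\infty$ from \eqref{bound:steadystate}. No additional functional inequality is needed, exactly as you observe.
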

    \begin{rema}
        The restriction $p>1$ in Theorem~\ref{th:expdec_TPFA_DN} does not prevent the entropy $\mathbb{E}_1^n$ from decaying exponentially fast in time. Indeed it trivially does since $\Phi_1\leq\Phi_2$ and thus $\mathbb{E}_1^n\ \leq\ \mathbb{E}_2^n$, so that
        \[
        \mathbb{E}_1^n\  \leq\ \mathbb{E}_2^0 (1\,+\,\kappa_2 \Delta t)^{-n}\,.
        \]
        However this estimate is not as sharp as \eqref{expdec_Ep_DN}. Indeed, the difference lies in the fact that unlike \eqref{expdec_Ep_DN}, the latter estimate is not saturated at $n=0$. In the same way one could show that any sub-quadratic $\Phi$-entropy decays at least as fast as $\mathbb{E}_2^n$. The same observation suggests that the degeneracy of $\kappa_p$ (and $\nu_p$) when $p\to1$ are only technical.
    \end{rema}
    \begin{rema}
        It is unclear which functional inequality should be used in the case $p=1$ with Dirichlet-Neumann boundary conditions. This was already noticed in \cite[Remark 3.1]{bodineau_2014_lyapunov}.
    \end{rema}

    \section{Large time behavior of discrete duality finite volume (DDFV) schemes}\label{sec:DDFV}
    
    \subsection{Meshes and set of unknowns}
    
    In order to introduce the DDFV scheme from \cite{CCHK},  we need to introduce three different meshes -- the primal mesh, 
    the dual mesh and the diamond mesh -- and some associated notations.
    
    The primal mesh denoted $\overline{\M}$ is composed of the interior primal mesh $\M$ (a partition of $\Omega$ with polygonal control volumes) and the set $\dr\M$ of boundary edges seen as degenerate control volumes. 
    For all $K\in \overline{\M}$, we define $\bx_K$ the center of $K$. 
    
    To any vertex $\xke$ of the primal mesh satisfying $\xke\in \Omega$, we associate a polygonal control volume $\ke$ defined by connecting all the centers of the primal cells sharing $\xke$ as vertex. The set of such control volumes is the interior dual mesh denoted $\M^*$. To any vertex $\xke\in \partial \Omega$, we define a polygonal control volume $\ke$ by connecting the centers of gravity of the interior primal cells and the midpoints of the boundary edges sharing $\xke$ as vertex. The set of such control volumes is the boundary dual mesh, denoted $\dr\M^*$. Finally, the dual mesh is $\Me$, denoted by $\overline{\Mie}$.
    {An illustration in the case of a triangular primal mesh is provided in Figure~\ref{fig_mesh}.}
    
    \begin{figure}[htb]
        \begin{tabular}{cc}
            \begin{tikzpicture}[scale=0.6]
                \draw[line width=1pt, color=purple]  (1,1)--(1,7)--(7,7)--(7,1)--cycle;
                \draw[line width=1pt,densely dashed]  (1,1)--(2.5,3)--(3.5,1)--(5.5,3.5)--(7,1);
                \draw[line width=1pt,densely dashed] (1,7)--(3.,5)--(4.5,7)--(5.5,3.5)--(7,7);
                \draw[line width=1pt,densely dashed] (7,4)--(5.5,3.5)--(2.5,3)--(1,4.5)--(3.,5)--(2.5,3);
                \draw[line width=1pt,densely dashed] (3.,5)--(5.5,3.5);
                \draw[line width=1pt,densely dashed] (-1,5.5)--(0,5.5)--(0,4.5)--(-1,4.5)--cycle;
                \draw[line width=1pt, color=purple] (-1.,3.5)--(0,3.5);
                \node[left] at (-1.2,5){\color{black} $\M$};
                \node[left] at (-1.2,3.5){$\dr\M$};
            \end{tikzpicture}&
            \begin{tikzpicture}[scale=0.6]
                \draw[line width=1pt, color=purple]  (1,1)--(1,7)--(7,7)--(7,1)--cycle;
                \draw[line width=.5pt,densely dashed]  (1,1)--(2.5,3)--(3.5,1)--(5.5,3.5)--(7,1);
                \draw[line width=.5pt,densely dashed] (1,7)--(3.,5)--(4.5,7)--(5.5,3.5)--(7,7);
                \draw[line width=.5pt,densely dashed] (7,4)--(5.5,3.5)--(2.5,3)--(1,4.5)--(3.,5)--(2.5,3);
                \draw[line width=.5pt,densely dashed] (3.,5)--(5.5,3.5);
                \draw[line width=1.pt,color=blue](1.5,2.83)--(2.17,4.17)--(3.67,3.83)--(3.83,2.5)--(2.33,1.67)--cycle;
                \draw[line width=1pt,color=blue](2.25,1)--(2.33,1.67);
                \draw[line width=1pt,color=blue](3.83,2.5)--(5.33,1.83);
                \draw[line width=1pt,color=blue](5.25,1)--(5.33,1.83)--(6.5,2.83)--(7,2.5);
                \draw[line width=1pt,color=blue](1,2.75)--(1.5,2.83);
                \draw[line width=1pt,color=blue](3.67,3.83)--(4.33,5.17)--(5.66,5.83)--(6.5,4.83)--(6.5,2.83);
                \draw[line width=1pt,color=blue](7,5.5)--(6.5,4.83);
                \draw[line width=1pt,color=blue](5.66,5.83)--(5.75,7);
                \draw[line width=1pt,color=blue](1,5.75)--(1.67,5.5)--(2.83,6.33)--(2.75,7);
                \draw[line width=1pt,color=blue](1.67,5.5)--(2.17,4.17);
                \draw[line width=1pt,color=blue](2.83,6.33)--(4.33,5.17);
                \filldraw[color=blue,pattern=dots,pattern color=blue](1.5,2.83)--(2.17,4.17)--(3.67,3.83)--(3.83,2.5)--(2.33,1.67)--cycle;
                \filldraw[color=blue,pattern=dots,pattern color=blue](3.83,2.5)--(5.33,1.83)--(6.5,2.83)--(6.5,4.83)--(5.66,5.83)--(4.33,5.17)--(3.67,3.83)--cycle;
                \filldraw[color=blue,pattern=dots,pattern color=blue](2.17,4.17)--(3.67,3.83)--(4.33,5.17)--(2.83,6.33)--(1.67,5.5)--(2.17,4.17)--cycle;
                
                \filldraw[color=blue,pattern=dots,pattern color=blue,line width=1pt](8,5.5)--(9,5.5)--(9,4.5)--(8,4.5)--cycle;
                \draw[color=blue,line width=1pt](8,4)--(9,4)--(9,3)--(8,3);
                \draw[color=purple,line width=1pt](8,4)--(8,3);
                \node[right] at (9.2,5){\color{black} $\M^*$};
                \node[right] at (9.2,3.5){$\dr\M^*$};
                
            \end{tikzpicture}
        \end{tabular}
        \caption{{An example of primal and dual meshes}}\label{fig_mesh}
    \end{figure}
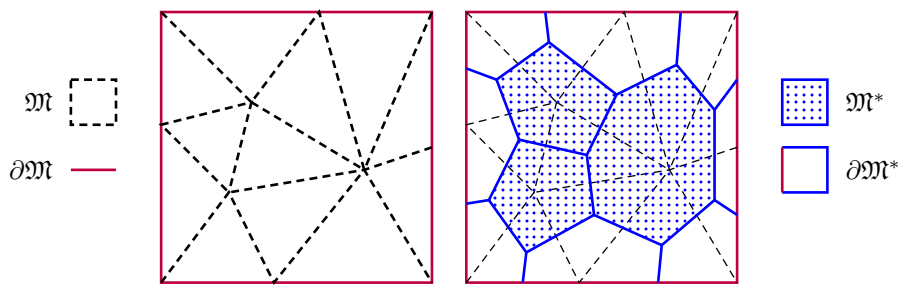

    For all neighboring primal cells $\k$ and $\l$, we assume that $\dr\k\cap\dr\l$ is a segment, corresponding to an edge of the mesh $\M$, 
    denoted by $\sigma=\k\vert\l$. Let $\Ee$ be the set of such edges. We similarly define the set $\Ee^*$ of the edges  of the dual mesh.
    For each couple $(\sigma,\sigma^*)\in\Ee\times\Ee^*$ such that $\sigma=\k\vert\l=(\xke,\xle)$ and $\sigma^*=\ke\vert\le=(\xk,\xl)$,
    we define the quadrilateral diamond cell $\Dsigsiget$ whose diagonals are $\sigma$ and $\sigma^*$. 
    If $\sigma\in \Ee\cap\dr\Omega$,
    we note that the diamond degenerates into a triangle. The set of the diamond cells defines the diamond mesh $\DD$.
    It is a partition of $\Omega$ as shown on Figure~\ref{fig_mesh_diamond}.
    We can rewrite $\DD=\DD^{\text{ext}}\cup \DD^{\text{int}}$ where $\DD^{\text{ext}}$ is the set of all the boundary diamonds and $\DD^{\text{int}}$ the set of all the interior diamonds.
    Finally, the DDFV mesh is made of  $\T=(\overline{\M},\overline{\Mie})$ and $\DD$.
    \begin{figure}[htb]
    \begin{center}
        \begin{tikzpicture}[scale=0.65]
            \draw[line width=1pt, color=purple]  (1,1)--(1,7)--(7,7)--(7,1)--cycle;
            \draw[line width=.5pt,densely dashed]  (1,1)--(2.5,3)--(3.5,1)--(5.5,3.5)--(7,1);
            \draw[line width=.5pt,densely dashed](1,7)--(3.,5)--(4.5,7)--(5.5,3.5)--(7,7);
            \draw[line width=.5pt,densely dashed] (7,4)--(5.5,3.5)--(2.5,3)--(1,4.5)--(3.,5)--(2.5,3);
            \draw[line width=.5pt,densely dashed] (3.,5)--(5.5,3.5);
            \draw[densely dotted,line width=2pt,color=red](3.5,1)--(2.33,1.67)--(2.5,3)--(3.83,2.5)--cycle;
            \draw[densely dotted,line width=2pt,color=red](2.5,3)--(1.5,2.83)--(1,4.5)--(2.17,4.17)--cycle;
            \draw[densely dotted,line width=2pt,color=red](1,7)--(1.67,5.5)--(3,5)--(2.83,6.33)--cycle;
            \draw[densely dotted,line width=2pt,color=red](4.5,7)--(5.66,5.83)--(5.5,3.5)--(4.33,5.17)--cycle;
            \draw[densely dotted,line width=2pt,color=red](6.5,4.83)--(7,4)--(6.5,2.83)--(5.5,3.5)--cycle;
            \draw[densely dotted,line width=2pt,color=red](2.17,4.17)--(3,5)--(3.67,3.83)--(5.5,3.5)--(5.33,1.83)--(7,1);
            \draw[densely dotted,line width=2pt,color=red](3.5,1)--(5.33,1.83);
            \draw[densely dotted,line width=2pt,color=red](5.5,3.5)--(3.83,2.5);
            \draw[densely dotted,line width=2pt,color=red](3.67,3.83)--(2.5,3);
            \draw[densely dotted,line width=2pt,color=red](2.33,1.67)--(1,1)--(1.5,2.83);
            \draw[densely dotted,line width=2pt,color=red](5.66,5.83)--(7,7)--(6.5,4.83);
            \draw[densely dotted,line width=2pt,color=red](4.5,7)--(2.83,6.33);
            \draw[densely dotted,line width=2pt,color=red](1,4.5)--(1.67,5.5);
            \draw[densely dotted,line width=2pt,color=red](3,5)--(4.33,5.17);
            \draw[densely dotted,line width=2pt,color=red](7,1)--(6.5,2.83);
            
            \draw[densely dotted,line width=2pt,color=red](8.5,5.5)--(9,5.)--(8.5,4.5)--(8.,5.)--cycle;
            \node[right] at (9.2,5){\color{black} $\DD$};
        \end{tikzpicture}
        \end{center}
        \caption{{An example of diamond mesh $\DD$}}\label{fig_mesh_diamond}
    \end{figure}

    For a diamond $\Dsigsiget$, whose vertices are $(\xk,\xke,\xl,\xle)$, 
    we define:
    $\bx_\Ds$ the center of the diamond cell $\Ds$, $\msig$ the length of the primal edge $\sig$,
    $\msige$ the length of the dual edge $\sige$, {$d_\Ds = \sup_{x,y\in\Ds}d(x,y)$} the diameter of $\Ds$, $\alpha_\Ds$ the angle between $(\xk,\xl)$ and $(\xke,\xle)$. 
    We will also use two direct basis $(\tkele,\nksig)$ and $(\nkesige,\tkl)$, where $\nksig$ is the unit normal to $\sigma$, outward $\k$, $\nkesige$ is the unit normal to $\sigma^*$, outward $\ke$, $\tkele$ is the unit tangent vector to $\sigma$, oriented from $\ke$ to $\le$, $\tkl$ is the unit tangent vector to $\sigma^*$, oriented from $\k$ to $\l$. All these notations are presented on Figure~\ref{fig_diamonds}.
    
    \begin{figure}[htb]
        \begin{center}
            \begin{tikzpicture}[scale=1.]
                \node[rectangle,scale=0.8,fill=black!50] (xle) at (0,0) {};
                \node[circle,draw,scale=0.5,fill=black!5] (xl) at (2,1.3) {};
                \node[rectangle,scale=0.8,fill=black!50]  (xke) at (0,4) {};
                \node[circle,draw,scale=0.5,fill=black!5] (xk) at (-2,2.3) {};
                \draw[line width=1pt,densely dashed] (xle)--(xke);
                \draw[line width=1pt,color=blue] (xk)--(xl);
                \draw[densely dotted,line width=2pt,color=red] (xk)--(xke)--(xl)--(xle)--(xk);
                
                \node[yshift=-8pt] at (xle){$\xle$};
                \node[yshift=8pt] at (xke){$\xke$};
                \node[xshift=10pt] at (xl){$\xl$};
                \node[xshift=-10pt] at (xk){$\xk$};
                
                \draw[->,line width=1pt] (-2+3.*0.4,2.3-3.*0.1)--(-2+4.8*0.4,2.3-4.8*0.1);
                \draw[->,line width=1pt] (-2+3.*0.4,2.3-3.*0.1)--(-2+3.*0.4-1.8*0.1,2.3-3.*0.1-1.8*0.4);
                \draw[->,line width=1pt] (0,2.7)--(0,2.);
                \draw[->,line width=1pt] (0,2.7)--(.7,2.7);
                \node[right] at (0,2.3){$\tkele$};
                \node[right] at (0.,2.95){$\nksig$};
                \node at (-0.5,2.2){$\tkl$};
                \node at (-0.45,1.25){$\nkesige$};
                
                \draw[line width=1pt,densely dashed]  (2.2,4)--(2.8,4);
                \node[right,xshift=8] at (2.8,4){$\sigma=\k\vert\l$, edge of the primal mesh};
                \draw[line width=1pt, color=blue] (2.2,3.5)--(2.8,3.5);
                \node[right,xshift=8] at (2.8,3.5){$\sigma^*=\ke\vert\le$, edge of the dual mesh};
                \draw[densely dotted,line width=2pt,color=red] (2.2,3)--(2.8,3);
                \node[right,xshift=8] at (2.8,3.){Diamond $\Dsigsiget$};
                \node[rectangle,scale=0.8,fill=black!50] at (2.8,2.5) {};
                \node[circle,draw,scale=0.5,fill=black!5] at (2.8,2.) {};
                \node[right,xshift=8] at (2.8,2.5){Vertices of the primal mesh};
                \node[right,xshift=8] at (2.8,2.){Centers of the primal mesh};
                \node [right]at (0,1.5) {$\bx_\Ds$};
                \node at (0,1.8) {$\bullet$};

                \node[rectangle,scale=0.8,fill=black!50] (xle2) at (11,0) {};
                \node[circle,draw,scale=0.5,fill=black!5] (xl2) at (11,2) {};
                \node[rectangle,scale=0.8,fill=black!50]  (xke2) at (11,4) {};
                \node[circle,draw,scale=0.5,fill=black!5] (xk2) at (9.5,2.3) {};
                
                \draw[line width=1pt,color=blue] (xk2)--(xl2);
                \draw[densely dotted,line width=2pt, color=red] (xle2)--(xk2)--(xke2)--(xle2);
                \draw[line width=1pt,densely dashed] (xle2)--(xke2);
                
                \node[yshift=-8] at (xle2){$\xle$};
                \node[yshift=8] at (xke2){$\xke$};
                \node[xshift=10] at (xl2){$\xl$};
                \node[xshift=-10] at (xk2){$\xk$};
                
            \end{tikzpicture}
        \end{center}
        \caption{Definition of the diamonds $\Dsigsiget$ and related notations.}\label{fig_diamonds}
    \end{figure}

    For each primal  cell $K\in {\overline \M}$ ({\em resp.} dual cell
    $\ke\in {\overline \Mie}$), we define ${\rm m}_K$ the measure of $K$, $\Ee_K$ the set of the edges of $K$ 
    (it coincides with the edge $\sigma=K$ if $K\in\dr\M$), $\DD_K$ the set of diamonds $\Dsigsiget\in\DD$ such that ${m}(\Dsigsiget\cap K)>0$, 
    and {$d_K = \sup_{x,y\in K}d(x,y)$} the diameter of $K$ ({\em resp.} ${\rm m}_\ke$, $\Ee_\ke$, $\DD_\ke$, $d_\ke$). 
    Denoting by $\md$ the $2$-dimensional Lebesgue measure of $\Ds$, one has 
    \be\label{eq:m_Dd}
    \md = \frac12 \msig\msige\sin(\alpha_\Ds), \qquad \forall \Ds = \Ds_{\sig, \sig^\ast}\in \DD.
    \ee
    
    We assume some regularity of the mesh as presented in \cite{CCHK}. Therefore, we define two local regularity factors $\theta_\Ds, {\tilde \theta}_\Ds$ of the diamond cell $\Ds=\Ds_{\sig,\sige}\in\DD$ by 
    $$
    \theta_\Ds = \frac1{2\sin(\alpha_\Ds)}\left(\frac{\msig}{\msige} + \frac{\msige}{\msig}\right), 
    \ 
    \tilde \theta_\Ds = \max \left( \max_{\stackrel{K \in \M,}{ {\rm m}_{\Ds \cap K}>0}}  \frac{\md}{{\rm m}_{\Ds \cap K}}\, ; \,\max_{\stackrel{K^* \in \M^*,}{ {\rm m}_{\Ds \cap K^*}>0}} \frac{\md}{{\rm m}_{\Ds \cap K^*}} \right)
    $$
    and we assume that there exists ${ \Theta} \ge 1$ such that 
    \be\label{eq:Theta}
    1 \leq \theta_\Ds, \tilde \theta_\Ds \leq {\Theta}, \qquad \forall \Ds \in \DD.
    \ee
    In particular, this implies that 
    \be\label{hypsinalpha}
    \sin(\alpha_\Ds) \ge \Theta^{-1}, \qquad \forall \Ds \in \DD.
    \ee

    Let us introduce the sets of discrete unknowns. $\R^\T$ is the linear space of scalar fields constant on the primal and dual cells and $\Mie$ and $(\R^2)^\DD$ is the linear space of vector fields constant on the diamonds. We have 
    \begin{eqnarray*}
        \ut\in \R^\T& \Longleftrightarrow& \ut=\left(\left(u_K\right)_{K\in{\overline \M}},\left(u_\ke\right)_{\ke\in{\overline \Mie}}\right)\\
        {\xib}_\DD\in (\R^2)^\DD & \Longleftrightarrow&{\xib}_\DD=\left({\xib}_\Ds\right)_{\Ds\in\DD}\,.
    \end{eqnarray*}
    
    Then, we define the positive semi-definite bilinear form $\llbracket\cdot,\cdot\rrbracket_\T $ 
    on $\R^\T $ and the scalar product $(\cdot,\cdot)_{{\bm \varLambda}, \DD} $ on $(\R^2)^\DD$ by 
    $$
    \begin{aligned}
        &\ds\left\llbracket\vt,\ut\right\rrbracket_\T&&=&&\frac{1}{2}\left(\sumpri\mk\uk v_K+\sumdua \mke\uke v_\ke
        \right), \ \ \ \forall \ut,\vt\in \R^\T,\\
        &\ds\left(\xib_\DD,{\bm \phi}_\DD\right)_{{\bm \varLambda}, \DD}&&=&&\sumdiam\md\ 
        \xib_\Ds\cdot {\bm \varLambda}^\Ds {\bm \phi}_\Ds,\ \ \ \forall \xib_\DD,{\bm \phi}_\DD\in(\R^2)^\DD,
    \end{aligned}
    $$
    where 
    $${\bm \varLambda}^\Ds = \frac1{\md} \int_{\Ds} {\bm \varLambda}(\bx) \,\dd\bx, \qquad \forall \Ds \in \DD.$$
    We denote by $\|\cdot\|_{{\bm \varLambda},\DD}$ the Euclidean norm associated to the scalar product $\left(\cdot, \cdot\right)_{{\bm \varLambda},\DD}$, i.e.,
    $$
    \left\| \xib_\DD \right\|_{{\bm \varLambda},\DD}^2 = \left(\xib_\DD, \xib_\DD \right)_{{\bm \varLambda},\DD}, \qquad \forall \xib_\DD \in (\R^2)^\DD.
    $$
    Let us remark that, due to the ellipticity (A4) of ${\bm\varLambda}$, we have 
    \be\label{ineg:normesD}
    \left\| \xib_\DD \right\|_{{\bm \varLambda},\DD}^2\geq \lambda_m\left\| \xib_\DD \right\|_{{\bm I},\DD}^2 \mbox{ with } \left\Vert \xib_\DD \right\Vert_{{\bm I},\DD}^2=\ds\sumdiam \md \vert \xib_\Ds\vert^2.
    \ee
    
    \subsection{The nonlinear DDFV scheme: presentation and \emph{a priori} estimates}
    
    \subsubsection*{Discrete operators}
    
    The DDFV method is based on a discrete duality formula which links a discrete gradient operator to a discrete divergence operator, as shown in \cite{DO_05}. 
    In this paper we don't need to introduce the discrete divergence. We just define the discrete gradient. It has been introduced in \cite{CVV_99} and developed in \cite{DO_05}; it  is a mapping from $\R^\T$ to $(\R^2)^\DD$ defined 
    by  $\gradDD \ut =\ds\left(\gradD \ut\right)_{\Ds\in\DD}$ for all  $\ut\in \R^\T$, 
    where
    \[
    \gradD \ut =\frac{1}{\sin(\alpha_\Ds)}\left(\frac{\ul-\uk}{\msige}\nksig+\frac{\ule-\uke}{\msig}\nkesige\right), \quad \forall  \Ds\in\DD.
    \]
    Using \eqref{eq:m_Dd}, the discrete gradient can be equivalently written:
    \[
    \gradD \ut =\frac{1}{2\md}\left(\msig(\ul-\uk)\nksig+\msige(\ule-\uke)\nkesige\right), \quad \forall  \Ds\in\DD.
    \]
    For $\ut\in\R^\T$ and $\Ds\in\DD$, we can define $\delta^\Ds \ut$ by 
    $$
    \delta^\Ds \ut=\left(\begin{array}{c} u_K-u_L\\ \uke-\ule\end{array}\right).
    $$
    Then, we can write 
    $$
    (\gradDD \ut,\gradDD v_\T)_{{\bm \varLambda}, \DD}=\sumdiam \delta^\Ds \ut\cdot {\mathbb A}^{\Ds}\delta^\Ds v_\T,
    $$
    where the local matrices ${\mathbb A}^{\Ds}$ are defined by
    $
    {\mathbb A}^\Ds=\begin{pmatrix}
        A^\Ds_{\sig,\sig} & A^\Ds_{\sig,\sige} \\
        A^\Ds_{\sig,\sige} & A^\Ds_{\sige,\sige} 
    \end{pmatrix},
    $
    with 
    $$
    \begin{aligned}
        A^\Ds_{\sig,\sig}&= \frac{1}{4 \md}\msig^2 ({\bm \varLambda}^\Ds \n_{\k,\sig}\cdot  \n_{\k,\sig}),\\
        A^\Ds_{\sig,\sige}& =\frac{1}{4 \md}\msig {\msige}  ({\bm \varLambda}^\Ds \n_{\k,\sig}\cdot  \n_{\ke,\sige}),\\
        A^\Ds_{\sige,\sige}& =\frac{1}{4 \md}\msige^2  ({\bm \varLambda}^\Ds \n_{\ke,\sige}\cdot  \n_{\ke,\sige}).
    \end{aligned}
    $$
    
    We  also introduce a reconstruction operator on diamonds $r^\DD$. It is a mapping from  $\R^\T$ to  $\R^\DD$ defined  for all  $\ut\in\R^\T$ by  $r^\DD[\ut] =\left(r^\Ds(\ut)\right)_{\Ds\in\DD}$.
    For $\Ds\in\DD$, whose vertices are $x_K$, $x_L$, $x_\ke$, $x_\le$, we define
    \be\label{eq:rD}
    r^\Ds(\ut)=f(r(\uk,\ul),r(\uke,\ule)),
    \ee
    where $r$ satisfies the properties \eqref{hyp:g} and $f$ is either defined by $f(x,y)=\max (x,y)$ or by $f(x,y)=(x+y)/2$.
    
    \subsubsection*{Definition of the scheme}
    
    Let us first define  the discrete initial condition $\ut^0$  by taking the mean values of $u_0$ on the primal and the dual meshes. For all $K \in \M$ and $K^\ast \in \overline{\M^\ast}$, we set 
    \[
    u_K^0 \,=\, \frac1{\mk} \int_K u_0\,, \quad u_{K^\ast}^0 \,=\, \frac1{\mke} \int_{\mk} u_0\,, \quad u_{\dr\M}^0\,=\,0\,.
    \]
    The exterior potential $V$ is discretized by taking its nodal values on the primal and dual cells, namely 
    $V_K \,=\, V(\bx_K)$ and $V_{K^\ast} \,=\, V(\bx_{K^\ast})$ 
    for all $K \in \overline\M$ and $K^\ast \in \overline{\M^\ast}$.

    We can now define the nonlinear DDFV scheme, as it is introduced in \cite{CCHK} but without stabilization term. Indeed, while the stabilization term seems crucial for the proof of convergence of the scheme, the numerical experiments show that it has no significant influence on the behavior of the scheme. The scheme is the following: for all $n\geq 0$, we look for $\ut^{n+1}\in (\R_+^\ast)^\T$ solution to the variational formulation:
    \begin{subequations}\label{schemeDDFV}
        \begin{align}
            &\Bigl\llbracket\ds\frac{\ut^{n+1}-\ut^n}{\Delta t}, \psi_\T\Bigl\rrbracket_\T+T_{\DD}(\ut^{n+1}; g_\T^{n+1},\psi_\T) =0,\quad \forall \psi_\T\in\R^\T,\label{sch_formcompacte}\\
            & T_{\DD}(\ut^{n+1}; g_\T^{n+1},\psi_\T)=\sumdiam r^\Ds(\ut^{n+1}) \, \delta^\Ds g_\T^{n+1}\cdot {\mathbb A}^{\Ds}\delta^\Ds \psi_\T,\label{sch_defTd}\\
            & g_\T^{n+1}=\log (\ut^{n+1})+V_\T.\label{sch_defgt}
        \end{align}
    \end{subequations}

    \subsubsection*{Conservation of mass and steady-state} By choosing 
    successively $\psi_\T=((1)_{K\in{\overline\M}},(0)_{\ke\in\overline \Mie})$ and $\psi_\T=((0)_{K\in{\overline\M}},(1)_{\ke\in\overline\Mie})$ as test functions in \eqref{schemeDDFV}, we obtain that the mass is conserved on the primal mesh and on the dual mesh: that is for all $n\geq0$ one has
    \[
        \ds\sumpri\mk \uk^n =\sumpri\mk \uk^0=M^1 = 
        \ds\sumdua\mke \uke^n =\sumdua\mke \uke^0, 
        \qquad \forall n \geq 0.
    \]
    The steady-state $\ut^{\infty}$ associated to the scheme \eqref{schemeDDFV} is  defined for all $ K\in\M$ and $\ke \in{\overline \Mie}$ by 
    $$
    \begin{aligned}
        u_K^{\infty}&=\rho e^{-V_K}\,,\quad\mbox{with}\ \rho = M^1\left(\sumpri \mk e^{-V_K}\right)^{-1}\,,\\
        u_\ke^{\infty}&=\rho^\ast e^{-V_\ke}\,,\quad\mbox{with}\ \rho^\ast =M^1\left(\sumdua \mke e^{-V_\ke}\right)^{-1}\,.
    \end{aligned}
    $$
    Let us remark that, as for the TPFA scheme, there exists $m^\infty>0$ and $M^{\infty}>0$ (and we keep the same notations) such that for all $K\in\M$ and $\ke\in{\overline\Mie}$
    \be\label{bound:steadystate_DDFV}
    m^\infty\leq u_K^\infty\,,\ u_\ke^\infty \leq M^{\infty}\,.
    \ee
    
    \subsubsection*{Entropy-dissipation estimate and existence result}
    
    As for the TPFA scheme, we introduce the discrete relative entropy $({\mathbb E}_{1,\T}^n)_{n\geq 0}$ obtained with the Gibbs entropy $\Phi_1$. It is defined by 
    $$
    {\mathbb E}_{1,\T}^n=\llbracket \ut^{\infty}\Phi_1(\ut^n/\ut^\infty),{1}_\T\rrbracket=\llbracket \ut^{n}\log(\ut^n/\ut^\infty),{1}_\T\rrbracket,\quad \forall n\geq 0.
    $$
    where the second equality comes from the conservation of mass on each mesh. The discrete entropy dissipation is defined by 
    $$
    {\mathbb I}_{1,\T}^{n+1} =T_{\DD}(\ut^{n+1}; g_\T^{n+1},g_\T^{n+1}), \quad \forall n\geq 0.
    $$
    We notice that the definition of the steady-state implies that $\delta^\Ds g_\T^{n+1}=\delta^\Ds \log(\ds\ut^{n+1}/\ut^{\infty})$ for all $\Ds\in\DD$. Therefore 
    ${\mathbb I}_{1,\T}^{n+1}$ rewrites for all $n\geq0$ as
    $$
    {\mathbb I}_{1,\T}^{n+1}=\sumdiam r^\Ds(\ut^{n+1}) \, \delta^\Ds \log(\ut^{n+1}/\ut^{\infty})\cdot {\mathbb A}^{\Ds}\delta^\Ds \log(\ut^{n+1}/\ut^{\infty})\,.
    $$
    
    We can now state the result of existence of a discrete positive solution to the scheme \eqref{schemeDDFV}, with the discrete entropy-entropy estimate.
    
    \begin{prop}\label{theo:ex_DDFV} 
        For all $n\geq 0$, there exists a solution $\ut^{n+1}\in (\R_+^\ast)^\T$ to the nonlinear system \eqref{schemeDDFV} that satisfies the discrete entropy/entropy dissipation estimate: 
        \be\label{entropy_dissip_DDFV}
        \ds\frac{{\mathbb E}_{1,\T}^{n+1}-{\mathbb E}_{1,\T}^{n}}{\Delta t}+ {\mathbb I}_{1,\T}^{n+1}\leq 0,\mbox{ for all } n\geq 0.
        \ee
    \end{prop}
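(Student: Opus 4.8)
The plan is to argue in two stages: first establish the entropy/dissipation inequality \eqref{entropy_dissip_DDFV} assuming a positive solution $\ut^{n+1}\in(\R_+^\ast)^\T$ is available, then recover existence from the a priori bounds this inequality produces.

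For the entropy/dissipation estimate, I would test the variational formulation \eqref{sch_formcompacte} against $\psi_\T = g_\T^{n+1} = \log(\ut^{n+1}) + V_\T$. The diffusive contribution then becomes $T_{\DD}(\ut^{n+1};g_\T^{n+1},g_\T^{n+1}) = {\mathbb I}_{1,\T}^{n+1}$, which I would verify is nonnegative: for every $\Ds\in\DD$ one has $\delta^\Ds w\cdot{\mathbb A}^\Ds\delta^\Ds w = \md\,\gradD w\cdot{\bm\varLambda}^\Ds\gradD w\geq 0$ by the ellipticity (A4) of ${\bm\varLambda}$ (cf.\ \eqref{ineg:normesD}), so ${\mathbb A}^\Ds$ is symmetric positive semi-definite, whereas $r^\Ds(\ut^{n+1})>0$ because $\ut^{n+1}$ is positive and $r$, $f$ preserve positivity. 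For the discrete time-derivative contribution, I would use the convexity of $\Phi_1$, which gives on every primal cell $K$ and dual cell $\ke$
\[
u_K^\infty\Bigl(\Phi_1(u_K^{n+1}/u_K^\infty)-\Phi_1(u_K^{n}/u_K^\infty)\Bigr)\ \leq\ \log(u_K^{n+1}/u_K^\infty)\,(u_K^{n+1}-u_K^{n})\,,
\]
and likewise with $\ke$ in place of $K$. Summing over the primal and dual meshes (weighted by $\mk/2$, resp.\ $\mke/2$) and observing that $\log(\ut^{n+1}/\ut^\infty)$ differs from $g_\T^{n+1}$ by an additive constant on each mesh ($\log\rho$, resp.\ $\log\rho^\ast$), I would cancel these constants by the conservation of mass on each mesh. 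This gives ${\mathbb E}_{1,\T}^{n+1}-{\mathbb E}_{1,\T}^{n}\leq\llbracket\ut^{n+1}-\ut^{n},g_\T^{n+1}\rrbracket_\T$, and the right-hand side equals $-\Delta t\,{\mathbb I}_{1,\T}^{n+1}$ upon using \eqref{sch_formcompacte} with $\psi_\T=g_\T^{n+1}$; this is exactly \eqref{entropy_dissip_DDFV}.

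For existence, I would follow the strategy of Proposition~\ref{theo:existence} and of \cite{CCHK}: regularize the scheme so that the singular nonlinearity $\log$ is truncated (equivalently, confine the unknowns to a compact subset of $(\R_+^\ast)^\T$), solve the regularized nonlinear system by a topological degree / Leray--Schauder argument \cite{leray_schauder_1934, deimling_1985_nonlinear, droniou_2018_gradient}, and then derive, uniformly in the regularization parameter, the a priori bounds needed to pass to the limit. The decay of ${\mathbb E}_{1,\T}^{n}$ obtained in the first stage, combined with mass conservation and the superlinearity of $\Phi_1$, bounds $\ut^{n+1}$ from above, while the control of the dissipation ${\mathbb I}_{1,\T}^{n+1}$ provided by \eqref{entropy_dissip_DDFV} yields a strictly positive (though possibly mesh-dependent) lower bound, exactly as in \cite[Lemma 3.5]{CCHK}. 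Passing to the limit in the regularization then produces a genuine positive solution of \eqref{schemeDDFV} that, by construction, satisfies \eqref{entropy_dissip_DDFV}.

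The genuine obstacle I anticipate is ensuring the strict positivity of $\ut^{n+1}$: the scheme degenerates at $0$ because of the logarithm in \eqref{sch_defgt}, so the limit passage in the regularized problem relies on a lower bound that stays away from $0$, which is precisely what the dissipation estimate buys us following \cite[Lemma 3.5]{CCHK}. By contrast, the entropy identity itself is routine bookkeeping combining convexity of $\Phi_1$, the test function $\psi_\T=g_\T^{n+1}$, and conservation of mass.
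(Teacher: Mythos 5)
Your proposal is correct and follows essentially the same route as the paper, whose proof simply defers to an adaptation of \cite[Theorem 2.2]{CCHK} (entropy--dissipation a priori bounds, mesh-dependent positive lower bound, topological degree) and notes that \eqref{entropy_dissip_DDFV} comes from taking $\psi_\T=\log(\ut^{n+1}/\ut^\infty)$ in \eqref{sch_formcompacte}. Your choice $\psi_\T=g_\T^{n+1}$ differs from this test function only by additive constants on each of the primal and dual meshes, which the conservation of mass cancels, exactly as you observe.
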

    
    The proof is a direct adaptation of the proof of \cite[Theorem 2.2]{CCHK} to the case without stabilization and with a discrete relative entropy (the entropy defined in \cite{CCHK} is a general entropy, not relative to the steady-state, which differs from ${\mathbb E}_{1,\T}^n$ from a constant). 
    Let us just mention that \eqref{entropy_dissip_DDFV} is obtained by using $\psi_\T=\log(\ut^{n+1}/\ut^\infty)$ as a test function in \eqref{sch_formcompacte}.
    
    {\begin{rema}
                 Let us precise that while an entropy inequality still holds in the case of the DDFV scheme with stabilization term as in \cite{CCHK}, the long time analysis performed in the next section does not seem to adapt to the stabilized scheme. Indeed, with the penalization term, the conservation of mass on the primal and dual meshes is not satisfied anymore which prevents the use of the discrete log-Sobolev inequality of Section~\ref{sec:logSob}.
                 
                 Nevertheless, let us recall that the stabilization term is mainly introduced to overcome a technical issue related to the identification of the limit in the convergence proof. At the practical level of numerical experiments, no noticeable difference has been observed between the behavior of the scheme with stabilization and its present version, both at the level of long time behavior and convergence. We refer to \cite{CCHK} for more details.
                \end{rema}
}

    \subsection{Analysis of the long time behavior}
    It remains to establish the exponential decay towards $0$ of the discrete relative entropy $({\mathbb E}_{1,\T}^{n})_{n\geq 0}$. As for the TPFA finite volume scheme, it is based on a relation between the discrete entropy and the discrete entropy dissipation of the form \eqref{ineq:EI}. And this inequality is a consequence of a discrete log-Sobolev inequality which is established in Proposition~\ref{prop:logSobDDFV}. 
    
    \bt\label{theo:expdec_DDFV}
    Let us assume that $\E_{\text{ext}}^D=\emptyset$. Then, there exists $\nu$ depending only on the domain $\Omega$, the regularity of the mesh $\Theta$, the initial condition $u_0$, the exterior potential $V$ and the anisotropy tensor $\varLambda$ {\em via} $\lambda_m$ and $\lambda^M$, such that
    \be\label{expdec_DDFV}
    {\mathbb E}_{1,\T}^n\ \leq\ (1\,+\,\nu \Delta t)^{-n}\,{\mathbb E}_{1,\T}^0\,,\quad\forall n\geq 0\,.
    \ee
    Thus for any $k>0$, if $\Delta t\leq k$, one has for all $n\geq0$ that ${\mathbb E}_{1,\T}^n\leq e^{-{\tilde \nu} t^n}{\mathbb E}_{1,\T}^0$ with ${\tilde\nu}=\log (1+\nu k)/k$. 
    \et 
    
    \begin{proof}
        In the following, $C$ will denote any positive constant depending only on $\Omega$, $\Theta$, $\lambda_m$ and $\lambda^M$. As for the TPFA finite volume scheme, we start with introducing a discrete counterpart of $4 u^\infty|\nabla ( u/u^\infty)^{1/2}|^2$ in the DDFV framework. We define for all $n\geq0$
        \[
        {\widehat{\mathbb I}}_{1,\T}^{n+1}=4\sumdiam {\bar u}_\Ds^\infty \delta^\Ds \sqrt{ \ut^{n+1}/\ut^\infty}\cdot {\mathbb A}^\Ds \delta^\Ds\sqrt{ \ut^{n+1}/\ut^\infty}
        \]
        with
        \[
        {\bar u}_\Ds^\infty=\min(u_K^\infty, u_L^\infty,u_\ke^\infty,u_\le^\infty)\,.
        \]
        
        In a first step, we compare ${{\mathbb I}}_{1,\T}^{n+1}$ to ${\widehat{\mathbb I}}_{1,\T}^{n+1}$. For all $\Ds\in\DD$, we introduce the diagonal matrix ${\mathbb B}^\Ds$, whose diagonal coefficients are $B_{\sig,\sig}^\Ds= |A_{\sig,\sig}^\Ds|+|A_{\sig,\sige}^\Ds|$ and $B_{\sige,\sige}^\Ds= |A_{\sige,\sige}^\Ds|+|A_{\sig,\sige}^\Ds|$. Then it is shown in \cite{CCHK} that for all $\Ds\in\DD$, there holds
        \[
        \bw\cdot {\mathbb A}^\Ds\bw \,\leq\, \bw\cdot {\mathbb B}^\Ds\bw\,\leq\, C\,\bw\cdot {\mathbb A}^\Ds\bw,\quad  \forall \bw\in\R^2\,.
        \]
        Therefore, on one hand, 
        \[
        {{\mathbb I}}_{1,\T}^{n+1}\geq C \sumdiam r^\Ds(\ut^{n+1}) \, \delta^\Ds \log(\ut^{n+1}/\ut^{\infty})\cdot {\mathbb B}^{\Ds}\delta^\Ds \log(\ut^{n+1}/\ut^{\infty})
        \]
        and on the other hand 
        \[
        {\widehat{\mathbb I}}_{1,\T}^{n+1}\leq 4\sumdiam {\bar u}_\Ds^\infty \delta^\Ds \sqrt{\ut^{n+1}/\ut^\infty}\cdot {\mathbb B}^\Ds \delta^\Ds\sqrt{\ut^{n+1}/\ut^\infty}\,.
        \]
        Besides, as ${\mathbb B}^\Ds$ is a diagonal matrix, for all $\Ds\in\DD$ we have 
        \[
        \delta^\Ds\sqrt{ \ds\frac{\ut^{n+1}}{\ut^\infty}}\cdot {\mathbb B}^\Ds \delta^\Ds\sqrt{ \ds\frac{\ut^{n+1}}{\ut^\infty}}\ =\ B_{\sig,\sig}^\Ds \left(\sqrt{\ds \frac{u_K^{n+1}}{u_K^\infty}}-
        \sqrt{\ds \frac{u_L^{n+1}}{u_L^\infty}}\right)^2+B_{\sige,\sige}^\Ds \left(\sqrt{\ds \frac{u_\ke^{n+1}}{u_\ke^\infty}}-
        \sqrt{\ds \frac{u_\le^{n+1}}{u_\le^\infty}}\right)^2.
        \]
        Adapting the inequalities \eqref{I_Ihat_1} and \eqref{I_Ihat_2} on the primal and the dual mesh, we obtain, thanks to the definition of ${\bar u}_\Ds^\infty$, 
        \begin{multline*}
            4{\bar u}_\Ds^\infty \delta^\Ds \sqrt{ \ds\frac{\ut^{n+1}}{\ut^\infty}}\cdot {\mathbb B}^\Ds \delta^\Ds\sqrt{ \ds\frac{\ut^{n+1}}{\ut^\infty}}
            \ \leq\ B_{\sig,\sig}^\Ds r(u_K^{n+1},u_L^{n+1}) \left(\log\left(\ds \frac{u_K^{n+1}}{u_K^\infty}\right)-
            \log\left(\ds \frac{u_L^{n+1}}{u_L^\infty}\right)\right)^2 \\
            + B_{\sige,\sige}^\Ds r(u_\ke^{n+1},u_\le^{n+1})\left(\log\left(\frac{u_\ke^{n+1}}{u_\ke^\infty}\right)-
            \log\left(\frac{u_\le^{n+1}}{u_\le^\infty}\right)\right)^2\,,
        \end{multline*}
        for all $\Ds\in\DD$. Moreover, the choice of the function $f$ in the reconstruction operator $r^\DD$ ensures that 
        \[
        \max(r(u_K^{n+1},u_L^{n+1}),r(u_\ke^{n+1},u_\le^{n+1}))\leq 2 r^\Ds (\ut^{n+1}), \quad \forall \Ds\in\DD
        \]
        so that we finally obtain 
        \be\label{IIhat}
        {{\mathbb I}}_{1,\T}^{n+1}\geq C{\widehat{\mathbb I}}_{1,\T}^{n+1}\quad \forall n\geq 0.
        \ee
        
        Let us now proceed with the comparison of ${\widehat{\mathbb I}}_{1,\T}^{n+1}$ and ${{\mathbb E}}_{1,\T}^{n+1}$. Thanks to the lower bound in \eqref{bound:steadystate_DDFV} and \eqref{ineg:normesD}, we have 
        \be\label{IhatFisher}
        {\widehat{\mathbb I}}_{1,\T}^{n+1}\ \geq\ 4m^\infty\left\Vert \nabla^\DD \sqrt{\ut^{n+1}/\ut^\infty}\right\Vert_{{\bm \varLambda},\DD}^2
        \ \geq\ 4m^\infty\lambda_m\left\Vert \nabla^\DD \sqrt{\ut^{n+1}/\ut^\infty}\right\Vert_{{\bm I},\DD}^2.
        \ee
        We apply the discrete log-Sobolev inequality \eqref{ineg:logSobDDFV} given in Proposition \ref{prop:logSobDDFV} with $v_\T=\ut^{n+1}$ and $v_\T^\infty=\ut^{\infty}$. It yields 
        \be\label{EFisher}
        {\mathbb E}_{1,\T}^{n+1} \leq C\, \left(M^\infty\,M^1\right)^{\frac{1}{2}} \left\Vert \nabla^\DD \sqrt{\ut^{n+1}/\ut^\infty}\right\Vert_{{\bm I},\DD}^2.
        \ee
        From \eqref{IIhat}, \eqref{IhatFisher} and \eqref{EFisher}, we obtain the expected relation between the discrete relative entropy and the discrete dissipation of the form \eqref{ineq:EI} with 
        \begin{equation}\label{eq:defnu}
            \nu=C\,m^\infty/\left(M^\infty\,M^1\right)^{\frac{1}{2}}  .
        \end{equation}
        It concludes the proof of Theorem~\ref{theo:expdec_DDFV}.
    \end{proof}

    \section{Discrete functional inequalities}\label{sec:logSob}
    
    In this section, we state and prove the various discrete functional inequalities that are needed to prove the exponential time decay of solutions to our nonlinear schemes in the case of Neumann boundary conditions. They apply to classical polygonal mesh $\M=(\T,\E,\P)$ of $\Omega$ satisfying the regularity constraint \eqref{regmesh}, but not necessarily the orthogonality property as introduced in \cite[Definition 9.1]{EGHbook}.
    
    Theorem~\ref{theo:func_ineq} is the main result of this section and constitutes a general statement of these new discrete functional inequalities. Then, in Proposition~\ref{prop:func_ineq} and Proposition~\ref{prop:logSobDDFV} we particularize these inequalities in order to use them in the long-time analysis of the present paper. Compared to previous works on discrete functional inequalities \cite{MBC_AJ_2014, bessemoulin_chainais_filbet_2015, chainais_jungel_schuchnigg_2016}, the novelty here is that the reference measure (or the steady state) is non-constant in the domain.
    
    \begin{theo}\label{theo:func_ineq}
        Let $\M=(\T,\E,\P)$ be a mesh of $\Omega$ satisfying the regularity constraint \eqref{regmesh} with parameter $\zeta>0$. Consider $(\mu_K)_{K\in\T}$ such that $\mu_K\geq0$ for all $K\in\T$ and 
        $ \sum_{K\in\T} \mk \mu_K=1$. Let us also define $\mu^\infty\ :=\ \sup_{K\in\T}\mu_K$.
        Then the following discrete functional inequalities hold.
        \begin{itemize}
            \item[\textbf{\emph{i)}}]\textbf{Discrete Poincaré-Wirtinger inequality.} There is a constant $C_{PW}>0$ depending only on $\Omega$ such that for any $(f_K)_{K\in\T}$
            \begin{equation}\label{ineg:PW}
                \sum_{K\in\T} \mk \left|f_K-\sum_{\tilde{K}\in\T}m_{\tilde{K}}f_{\tilde{K}}\mu_{\tilde{K}}\right|^2\mu_K\ \leq\  
                \frac{C_{PW}}{\zeta}\, \mu^\infty\,\sum_{\substack{\sigma\in\E_{\text{int}} \\ \sigma=K|L}}\tau_{\sigma} \left\vert f_K-f_L\right\vert^2\,.
            \end{equation}
            \item[\textbf{\emph{ii)}}]\textbf{Discrete Beckner inequality.} There is a constant $C_{B}>0$  depending only on $\Omega$ (actually $C_B=C_{PW}$) such that for all $p\in(1,2]$ and $(f_K)_{K\in\T}$ satisfying $f_K\geq0$
            \begin{equation}\label{ineg:Beck}
                \sum_{K\in\T} \mk f_K^{2}\mu_K - \left(\sum_{K\in\T} \mk f_K^{2/p}\mu_K\right)^p\ \leq\  
                \frac{C_{B}}{\zeta}\,\mu^\infty\,\sum_{\substack{\sigma\in\E_{\text{int}} \\ \sigma=K|L}}\tau_{\sigma} \left|f_K-f_L\right|^2\,.
            \end{equation}
            \item[\textbf{\emph{iii)}}]\textbf{Discrete logarithmic Sobolev inequality.} There is a constant $C_{LS}>0$ depending only on $\Omega$ such that for all $(f_K)_{K\in\T}$ satisfying $f_K>0$, one has
            \begin{equation}\label{ineg:logSob}
                \sum_{K\in\T} \mk f_K^2\log \left(\frac{f_K^2}{\sum_{\tilde{K}\in\T}m_{\tilde{K}}f_{\tilde{K}}^2\mu_{\tilde{K}}}\right)\mu_K\ \leq\  
                \frac{C_{LS}}{\zeta^2}\, \sqrt{\mu^\infty}\,\sum_{\substack{\sigma\in\E_{\text{int}} \\ \sigma=K|L}}\tau_{\sigma} \left\vert f_K-f_L\right\vert^2\,.
            \end{equation}
        \end{itemize}
    \end{theo}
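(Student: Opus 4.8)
The plan is to reduce each of the three inequalities to its analogue for the \emph{uniform} reference measure on the mesh, the non-constant weight $(\mu_K)_{K\in\T}$ being absorbed via the bound $\mu_K\le\mu^\infty$ together with the normalization $\sum_K m_K\mu_K=1$; the uniform-measure versions on meshes satisfying only \eqref{regmesh} (hence without orthogonality) are either available \cite{bessemoulin_chainais_filbet_2015,chainais_jungel_schuchnigg_2016} or have to be proved en route, and this is where the factors $\zeta^{-1}$, $\zeta^{-2}$ originate. For (i), I would use that $\overline f_\mu:=\sum_{\tilde K}m_{\tilde K}f_{\tilde K}\mu_{\tilde K}$ minimizes $c\mapsto\sum_K m_K|f_K-c|^2\mu_K$ (because $\sum_K m_K\mu_K=1$); taking $c=\overline f:=|\Omega|^{-1}\sum_K m_K f_K$ and bounding $\mu_K\le\mu^\infty$ gives $\sum_K m_K|f_K-\overline f_\mu|^2\mu_K\le\mu^\infty\sum_K m_K|f_K-\overline f|^2$, and the classical discrete Poincaré--Wirtinger inequality for the uniform measure on a mesh satisfying \eqref{regmesh} bounds the last sum by $(C_{PW}/\zeta)\sum_{\sigma=K|L}\tau_\sigma|f_K-f_L|^2$, which is \eqref{ineg:PW}. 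For (ii), the point is the elementary fact that on any probability space $\big(\int f^{2/p}\big)^p\ge\big(\int f\big)^2$ for $p\in(1,2]$ and $f\ge0$, by monotonicity of $L^q$-norms ($2/p\ge1$); applied to the measure of density $\mu$ this shows that the left-hand side of \eqref{ineg:Beck} is at most $\sum_K m_K|f_K-\overline f_\mu|^2\mu_K$, so \eqref{ineg:Beck} is immediate from \eqref{ineg:PW} with $C_B=C_{PW}$.

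The substantive inequality is the logarithmic Sobolev one (iii). By homogeneity I would normalize $\sum_K m_K f_K^2\mu_K=1$, so that the left-hand side of \eqref{ineg:logSob} is the entropy $\mathrm{Ent}_\mu(f^2)=\sum_K m_K f_K^2\log(f_K^2)\mu_K$. Using $t\log t\le C_\varepsilon t^{1+\varepsilon}$ with $t=f_K^2$ gives $\mathrm{Ent}_\mu(f^2)\le C_\varepsilon\sum_K m_K f_K^{2+\varepsilon}\mu_K$ for every $\varepsilon>0$, and a Cauchy--Schwarz step then peels off the weight,
\[
\sum_K m_K f_K^{2+\varepsilon}\mu_K\ \le\ \Big(\sum_K m_K f_K^{2(2+\varepsilon)}\Big)^{1/2}\Big(\sum_K m_K\mu_K^2\Big)^{1/2}\ \le\ \sqrt{\mu^\infty}\,\|f\|_{L^{2(2+\varepsilon)}}^{2+\varepsilon},
\]
using $\sum_K m_K\mu_K^2\le\mu^\infty$; this is exactly where the exponent $1/2$ on $\mu^\infty$ appears. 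A discrete two-dimensional Gagliardo--Nirenberg--Sobolev inequality on meshes satisfying \eqref{regmesh} then controls $\|f\|_{L^{2(2+\varepsilon)}}$, at the cost of a factor $\zeta^{-2}$ and of a lower-order $L^2$ term; optimizing in $\varepsilon$ yields a \emph{defective} log-Sobolev inequality $\mathrm{Ent}_\mu(f^2)\le(C/\zeta^2)\sqrt{\mu^\infty}\sum_{\sigma=K|L}\tau_\sigma|f_K-f_L|^2+B$. The defect is removed by a Rothaus-type perturbation: apply this to $f-\overline f_\mu$ and use $\mathrm{Ent}_\mu(f^2)\le\mathrm{Ent}_\mu((f-\overline f_\mu)^2)+2\,\mathrm{Var}_\mu(f)$ together with the control of $\mathrm{Var}_\mu(f)=\sum_K m_K|f_K-\overline f_\mu|^2\mu_K$ supplied by (i), which gives the tight inequality \eqref{ineg:logSob}. (If one only wants the weaker statement with $\mu^\infty$ in place of $\sqrt{\mu^\infty}$, it follows quickly from the variational identity $\mathrm{Ent}_\mu(f^2)=\inf_{t>0}\sum_K m_K\mu_K\big(f_K^2\log(f_K^2/t)-f_K^2+t\big)$ -- each summand being $\ge0$ -- combined with the known discrete log-Sobolev inequality for the uniform measure.)

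The hard part will be twofold, and both difficulties sit inside (iii). First, one must establish the discrete Gagliardo--Nirenberg--Sobolev inequality (and the discrete Poincaré--Wirtinger inequality used in (i) and in the tightening) on general meshes obeying only \eqref{regmesh}, without the orthogonality of \cite[Definition 9.1]{EGHbook}: this is done by chaining estimates that route through the edge midpoints $\bx_\sigma$, and it is exactly this detour that produces the powers of $\zeta^{-1}$ -- one power for Poincaré--Wirtinger, two for the Sobolev-type bound. Second, one has to carry the constants carefully through the truncation, the Cauchy--Schwarz peeling of the weight, the handling of the $L^2$ defect via centering and (i), and the $\varepsilon$-optimization, so that the final constant depends only on $\Omega$ and carries precisely the advertised factors $\zeta^{-2}$ and $(\mu^\infty)^{1/2}$. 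By contrast, inequalities (i) and (ii) are short once the uniform-measure Poincaré--Wirtinger inequality is in hand.
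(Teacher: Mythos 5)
Your treatments of \textbf{i)} and \textbf{ii)} are correct and essentially the paper's: the paper routes \textbf{i)} through the factor-two comparison \eqref{eq:means_ineq} between the $\mu$-mean and the Lebesgue mean rather than your variational characterization of $\overline f_\mu$ (a harmless variant), and \textbf{ii)} is the same Jensen/norm-monotonicity reduction to \eqref{ineg:PW}. Note also that the uniform-measure Poincaré--Wirtinger and Poincaré--Sobolev inequalities you plan to ``prove en route'' are already available on meshes satisfying only \eqref{regmesh}, without orthogonality, in \cite[Theorems 3.2 and 3.6]{bessemoulin_chainais_filbet_2015}, so no chaining through edge midpoints is required.

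The genuine gap is in \textbf{iii)}, at two places. First, the chain $t\log t\le C_\varepsilon t^{1+\varepsilon}$, Cauchy--Schwarz, Gagliardo--Nirenberg does not yield a defective log-Sobolev inequality: under your normalization $\sum_K m_K f_K^2\mu_K=1$ it gives $\mathrm{Ent}_\mu(f^2)\le C_\varepsilon\sqrt{\mu^\infty}\,\|f\|_{L^{2(2+\varepsilon)}(\Omega)}^{2+\varepsilon}$, and after the Sobolev step the right-hand side is of order $\bigl(\sum_\sigma\tau_\sigma|f_K-f_L|^2\bigr)^{(2+\varepsilon)/2}$ plus a Lebesgue $L^2$ term; this is superlinear in the Dirichlet form, and the Lebesgue norm $\|f\|_{L^2(\Omega)}$ is not controlled by the $\mu$-normalization (cells with $\mu_K=0$ are invisible to it), nor is it compatible with the $\mu$-centering required by Rothaus. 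The ``optimization in $\varepsilon$'' (with $C_\varepsilon\sim\varepsilon^{-1}$) is precisely the delicate point and is not carried out. Second, even granting a defective inequality of the form $\mathrm{Ent}_\mu(g^2)\le A\,\mathcal E(g)+B\|g\|_{L^2_\mu}^2$, the Rothaus tightening costs $2\,\mathrm{Var}_\mu(f)$, which by \textbf{i)} is only bounded by $(C_{PW}/\zeta)\,\mu^\infty\sum_\sigma\tau_\sigma|f_K-f_L|^2$; this degrades the advertised $\sqrt{\mu^\infty}$ to $\mu^\infty$, so the statement \eqref{ineg:logSob} as claimed (and the rate $\nu_1\sim m^\infty(M^\infty M^1)^{-1/2}$ it produces downstream) would not be obtained. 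The paper avoids both problems by adapting \cite[Lemma 2.1]{desvillettes_fellner_2007} to the measure $\mu$, namely \eqref{logSob:startpt}, and choosing $q=4$ so that the coefficient of the $L^2_\mu$ defect vanishes identically: the entropy is bounded by $2\|f-\mu f\|_{L^4_\mu}^2$, which is quadratic and already centered, the weight is peeled off at this single $L^4_\mu\to L^4(\Omega)$ step, yielding exactly $\sqrt{\mu^\infty}$ via \eqref{eq:means_ineq}, and the discrete Poincaré--Sobolev plus Poincaré--Wirtinger inequalities of \cite{bessemoulin_chainais_filbet_2015} supply the factor $\zeta^{-2}$, with no defective inequality and no Rothaus step. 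Your parenthetical variational-identity argument does give the weaker bound with $\mu^\infty$ in place of $\sqrt{\mu^\infty}$, but that is not the inequality stated in the theorem.
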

    For the proof of this theorem, we need to introduce a few notations and an important technical lemma. In the following, given a sequence $(f_K)_{K\in\T}$, its piecewise constant reconstruction is denoted $f(\bx)=\sum_{K\in\T}f_K{\bm 1}_K (\bx)$. Given $\mu(\bx)\,\dd\bx$ an absolutely continuous probability measure on $\Omega$ and $g$ a bounded measurable function with respect to $\mu$, we denote by $\mu g$ the mean value of $g$ with respect to $\mu$ and ${\bar g}$ the usual mean value, namely
    \[
    \mu g=\int_\Omega g(\bx) \mu(\bx) \,\dd\bx\quad \mbox{and}\quad {\bar g} =\ds\frac{1}{m(\Omega)}\int_\Omega g(\bx) \,\dd\bx\,.
    \]
    Moreover, we denote by $\Vert\cdot\Vert_{L^q_\mu(\Omega)}$ the canonical $L^q$-norm with respect to the measure $\mu$ and $\Vert\cdot\Vert_{L^q(\Omega)}$ the canonical $L^q$-norm with respect to the Lebesgue measure.
    \begin{lem} For all $q\in[1,\infty]$ and any suitably integrable function $g$ one has
        \begin{equation}\label{eq:means_ineq}
            \Vert g-\mu g\Vert_{L^q_\mu(\Omega)}\ \leq\ 2\,\Vert g-{\bar g}\Vert_{L^q_\mu(\Omega)}\,.
        \end{equation}
    \end{lem}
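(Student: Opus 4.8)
The plan is to split off the difference between the two mean values by the triangle inequality and then bound it crudely. Since $\mu(\bx)\,\dd\bx$ is a probability measure on $\Omega$, any constant $c$ satisfies $\Vert c\Vert_{L^q_\mu(\Omega)}=|c|$ for every $q\in[1,\infty]$, and moreover the $L^q_\mu$-norms are nondecreasing in $q$, so in particular $\Vert h\Vert_{L^1_\mu(\Omega)}\le\Vert h\Vert_{L^q_\mu(\Omega)}$ for any $h$. These two elementary facts are the only ingredients needed.

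First I would write $g-\mu g=(g-{\bar g})+({\bar g}-\mu g)$ and apply the triangle inequality in $L^q_\mu(\Omega)$ to get
\[
\Vert g-\mu g\Vert_{L^q_\mu(\Omega)}\ \leq\ \Vert g-{\bar g}\Vert_{L^q_\mu(\Omega)}+\Vert {\bar g}-\mu g\Vert_{L^q_\mu(\Omega)}\ =\ \Vert g-{\bar g}\Vert_{L^q_\mu(\Omega)}+|{\bar g}-\mu g|\,,
\]
using that ${\bar g}-\mu g$ is a constant and $\mu$ is a probability measure. Next, since $\mu$ integrates to $1$ we have $\mu({\bar g})={\bar g}$, hence ${\bar g}-\mu g=\mu({\bar g}-g)$, and therefore by Jensen's inequality (or simply $|\int\cdot\,\dd\mu|\le\int|\cdot|\,\dd\mu$)
\[
|{\bar g}-\mu g|\ =\ \left|\int_\Omega (g-{\bar g})\,\mu\,\dd\bx\right|\ \leq\ \Vert g-{\bar g}\Vert_{L^1_\mu(\Omega)}\ \leq\ \Vert g-{\bar g}\Vert_{L^q_\mu(\Omega)}\,,
\]
the last step by monotonicity of $L^p$-norms on a probability space (Hölder's inequality), valid including the endpoint $q=\infty$. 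Combining the two displays yields $\Vert g-\mu g\Vert_{L^q_\mu(\Omega)}\le 2\Vert g-{\bar g}\Vert_{L^q_\mu(\Omega)}$, which is exactly \eqref{eq:means_ineq}.

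There is no real obstacle here: the argument is a two-line triangle inequality, the only points deserving a word being the normalization $\int_\Omega\mu\,\dd\bx=1$ (so that constants are "invisible" to $\mu$ and their $L^q_\mu$-norm is their modulus) and the inclusion $L^q_\mu(\Omega)\hookrightarrow L^1_\mu(\Omega)$ for probability measures; both are used to absorb the auxiliary constant ${\bar g}-\mu g$ back into the right-hand side. The factor $2$ is of course not sharp but is all that is needed in the sequel.
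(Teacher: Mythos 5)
Your proof is correct and follows essentially the same route as the paper's: both hinge on the identity ${\bar g}-\mu g=\mu({\bar g}-g)$ (which the paper obtains via a Fubini double-integral), bounded by Jensen/H\"older to get $|{\bar g}-\mu g|\leq\Vert g-{\bar g}\Vert_{L^q_\mu(\Omega)}$, and then the Minkowski (triangle) inequality to absorb the constant. Nothing is missing; your explicit remark on the endpoint $q=\infty$ and on constants having $L^q_\mu$-norm equal to their modulus is a welcome clarification.
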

    \begin{proof}
        By Fubini's theorem one has
        \[
        {\bar g} - \mu g =  \int_\Omega\int_\Omega (g(\bx)-g({\bm z}))\ds\frac{\,\dd\bx }{m(\Omega)} \mu({\bm z})d{\bm z}. 
        \]
        Therefore, applying Jensen's inequality, we obtain that $\vert {\bar g} - \mu g \vert \leq \Vert g-{\bar g}\Vert_{L^q_\mu(\Omega)}$. One then concludes via the Minkowski (triangle) inequality.
    \end{proof}
    
    \begin{proof}[Proof of Theorem~\ref{theo:func_ineq}] The inequality in \textbf{i)} is obtained from the classical discrete Poincar\'e-Wirtinger inequality (see  \cite[Theorem 3.6]{bessemoulin_chainais_filbet_2015}) which yields
        \begin{equation}\label{logSob:st4}
            \Vert f-{\bar f}\Vert_{L^2(\Omega)}^2\leq \ds\frac{C(\Omega)}{{\zeta}}\sum_{\substack{\sigma\in\E_{\text{int}} \\ \sigma=K|L}} \tau_\sigma (f_K-f_L)^2\,.
        \end{equation}
        Then, the result follows from \eqref{eq:means_ineq}.
        
        The Beckner inequality in \textbf{ii)} is obtained as in \cite{chainais_jungel_schuchnigg_2016} from the Poincaré-Wirtinger inequality \eqref{ineg:PW} and the Jensen inequality. Indeed, for $p\in[1,2]$, one has
        \[
        \|f-\mu f\|_{L^2_\mu}^2\ =\ \|f\|_{L^2_\mu}^2 - |\mu f|^2\ \geq\ \|f\|_{L^2_\mu}^2 - |\mu |f|^{2/p}|^p
        \]
        
        For the logarithmic Sobolev inequality in \textbf{iii)}, the starting point of the proof is the same as in \cite{MBC_AJ_2014}. It follows \cite[Lemma 2.1]{desvillettes_fellner_2007} adapted to a probability measure $\mu$ different from the  Lebesgue measure. Proceeding as in the previous references we get for all $q>2$
        \begin{equation}\label{logSob:startpt}
            \ds\int_\Omega f^2 \log \frac{f^2}{\Vert f\Vert^2_{L^2_\mu(\Omega)}} \mu(\bx) \,\dd\bx\ \leq\  \frac{q}{q-2}\, \Vert f-\mu f\Vert_{L^q_\mu(\Omega)}^2\,+\,\frac{q-4}{q-2}\,\Vert f-\mu f\Vert_{L^2_\mu(\Omega)}^2.
        \end{equation} 
        Thus by choosing $q=4$ and using inequality \eqref{eq:means_ineq} we get
        \be\label{logSob:st1}
        \ds\int_\Omega f^2 \log \frac{f^2}{\Vert f\Vert^2_{L^2_\mu(\Omega)}} \mu(\bx) \,\dd\bx\ \leq\ 8\,\sqrt{\mu^\infty} \Vert f-{\bar f}\Vert_{L^4(\Omega)}^2\,.
        \ee
        We may now apply a discrete Poincar\'e-Sobolev inequality to the piecewise constant function $f-{\bar f}$, see \cite[Theorem 3.2]{bessemoulin_chainais_filbet_2015}. It yields 
        \begin{equation}\label{logSob:st3}
            \Vert f-{\bar f}\Vert_{L^4(\Omega)}^2\leq \ds\frac{\tilde{C}(\Omega)}{{\zeta}}\left(\Vert f-{\bar f}\Vert_{L^2(\Omega)}^2+
            \ds\sum_{\substack{\sigma\in\E_{\text{int}} \\ \sigma=K|L}} \tau_\sigma \left(f_K-f_L)^2\right)\right),
        \end{equation}
        where $\tilde{C}(\Omega)$ depends only on $\Omega$. Finally, as $\zeta\leq 1$, we deduce from \eqref{logSob:st3} and \eqref{logSob:st4} the existence of some $\hat{C}(\Omega)$ (depending on $C(\Omega)$ and $\tilde{C}(\Omega)$) such that  
        \[
        \Vert f-{\bar f}\Vert_{L^4(\Omega)}^2\leq \ds\frac{\hat{C}(\Omega)}{{\zeta}^2}\sum_{\substack{\sigma\in\E_{\text{int}} \\ \sigma=K|L}} \tau_\sigma (f_K-f_L)^2\,.
        \]
        Together with \eqref{logSob:st1}, it proves the result.
    \end{proof}
    
    Let us now particularize Theorem~\ref{theo:func_ineq} in order to fit with the objects of the previous sections. 
    
    \begin{prop}\label{prop:func_ineq}
        Let $\M=(\T,\E,\P)$ be a mesh of $\Omega$ satisfying the regularity constraint \eqref{regmesh}. Consider $(v_K)_{K\in\T}$ and $(v_K^\infty)_{K\in\T}$ satisfying $v_K>0$, $v_K^\infty>0$ for all $K\in\T$ and 
        \[\ds\sum_{K\in\T} \mk v_K=\ds\sum_{K\in\T} \mk v_K^{\infty}=:M^1\,,\]
        and write $M^\infty\ =\ \sup_{K\in\T}v_K^\infty$. There holds
        \be\label{ineg:logSob_2}
        \ds\sum_{K\in\T} \mk \Phi_1\left(\frac{v_K}{v_K^\infty}\right) v_K^\infty\leq 
        \frac{C_{LS}}{\zeta^2}  \left(M^1\,M^\infty\right)^\frac{1}{2}\sum_{\substack{\sigma\in\E_{\text{int}} \\ \sigma=K|L}}\tau_{\sigma} \left\vert \sqrt{\frac{v_K}{v_K^\infty}}-\sqrt{\frac{v_L}{v_L^\infty}}\right\vert^2\,,
        \ee
        and for all $p\in(1,2]$
        \be\label{ineg:Beck_2}
        \ds\sum_{K\in\T} \mk \Phi_p\left(\frac{v_K}{v_K^\infty}\right) v_K^\infty\ \leq\ 
        \frac{C_{B}}{(p-1)\zeta}\,  M^\infty\sum_{\substack{\sigma\in\E_{\text{int}} \\ \sigma=K|L}}\tau_{\sigma} \left\vert \left(\frac{v_K}{v_K^\infty}\right)^{\frac p2}-\left(\frac{v_L}{v_L^\infty}\right)^{\frac p2}\right\vert^2\,,
        \ee
        where we recall that $\Phi_p(s) = (s^p-ps)/(p-1) + 1$ and $\Phi_1(s) = s\log(s)-s+1$.
    \end{prop}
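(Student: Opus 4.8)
The plan is to derive Proposition~\ref{prop:func_ineq} directly from Theorem~\ref{theo:func_ineq} by an appropriate choice of the reference measure and of the test sequence. First I would set $\mu_K = v_K^\infty/M^1$ for all $K\in\T$. Since the $v_K^\infty$ are positive and $\sum_{K\in\T}\mk v_K^\infty = M^1$, this family satisfies $\mu_K\geq 0$ and $\sum_{K\in\T}\mk\mu_K = 1$, so Theorem~\ref{theo:func_ineq} applies; moreover $\mu^\infty = \sup_{K\in\T}\mu_K = M^\infty/M^1$.

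For the logarithmic Sobolev inequality \eqref{ineg:logSob_2}, I would apply \eqref{ineg:logSob} with $f_K = \sqrt{v_K/v_K^\infty}>0$. The key simplification is that $\sum_{\tilde K\in\T}m_{\tilde K}f_{\tilde K}^2\mu_{\tilde K} = \frac1{M^1}\sum_{K\in\T}\mk v_K = 1$ by the mass constraint on $v$, so the logarithm in the left-hand side of \eqref{ineg:logSob} is simply $\log(v_K/v_K^\infty)$. Hence that left-hand side equals $\frac1{M^1}\sum_{K\in\T}\mk v_K\log(v_K/v_K^\infty)$, and using $\sum_{K\in\T}\mk v_K = \sum_{K\in\T}\mk v_K^\infty = M^1$ one checks that $\sum_{K\in\T}\mk v_K\log(v_K/v_K^\infty) = \sum_{K\in\T}\mk\Phi_1(v_K/v_K^\infty)v_K^\infty$. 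Multiplying \eqref{ineg:logSob} by $M^1$ and substituting $\sqrt{\mu^\infty} = \sqrt{M^\infty/M^1}$, so that $M^1\sqrt{\mu^\infty} = \sqrt{M^1M^\infty}$, yields \eqref{ineg:logSob_2}.

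For the Beckner inequality \eqref{ineg:Beck_2}, with the same measure $\mu$ I would apply \eqref{ineg:Beck} with $f_K = (v_K/v_K^\infty)^{p/2}\geq 0$, so that $f_K^{2/p} = v_K/v_K^\infty$ and again $\sum_{K\in\T}\mk f_K^{2/p}\mu_K = \frac1{M^1}\sum_{K\in\T}\mk v_K = 1$. Thus the left-hand side of \eqref{ineg:Beck} is $\frac1{M^1}\sum_{K\in\T}\mk v_K^\infty(v_K/v_K^\infty)^p - 1$, which, after expanding $\Phi_p(s) = (s^p-ps)/(p-1)+1$ and using the two mass constraints, equals $\frac{p-1}{M^1}\sum_{K\in\T}\mk\Phi_p(v_K/v_K^\infty)v_K^\infty$. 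Multiplying \eqref{ineg:Beck} by $M^1/(p-1)$ and using $\mu^\infty = M^\infty/M^1$ gives \eqref{ineg:Beck_2}, with $C_B$ the constant of Theorem~\ref{theo:func_ineq}.

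There is essentially no hard step left here: all the analytic content lies in Theorem~\ref{theo:func_ineq}, and this proposition is a bookkeeping exercise. The only points requiring a little care are (i) checking that the chosen $(\mu_K)$ is an admissible discrete probability measure and that the chosen $(f_K)$ meets the positivity hypotheses of the respective inequalities, and (ii) tracking the normalization factors $M^1$ and $M^\infty$ so that the powers of $M^\infty$ ($\sqrt{M^\infty}$ in the log-Sobolev case, $M^\infty$ in the Beckner case) come out exactly as stated. I would also verify along the way that the constants $C_{LS}$ and $C_B$ appearing here are literally those of Theorem~\ref{theo:func_ineq}.
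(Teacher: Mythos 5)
Your proposal is correct and follows exactly the paper's own argument: apply Theorem~\ref{theo:func_ineq} with $\mu_K = v_K^\infty/M^1$ and $f_K = (v_K/v_K^\infty)^{1/2}$ (resp. $(v_K/v_K^\infty)^{p/2}$), the only difference being that you write out the normalization bookkeeping the paper leaves implicit. Your intermediate identities (the weighted mean equals $1$, the rewriting of the entropies via the equal-mass constraint, and the factors $\sqrt{M^1M^\infty}$ and $M^\infty/(p-1)$) all check out.
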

    \begin{proof} The first inequality is a consequence of \eqref{ineg:logSob} with the choice  $f_K = (v_K/v_K^\infty)^{1/2}$ and $\mu_K = v_K^\infty/M^1$. The second inequality is obtained by taking $f_K = (v_K/v_K^\infty)^{p/2}$ and $\mu_K = v_K^\infty/M^1$ in \eqref{ineg:Beck}.
    \end{proof}
    \begin{rema}
        Observe that when $p\to1$ the left-hand side of \eqref{ineg:Beck_2} degenerates to the left-hand side of \eqref{ineg:logSob_2}. However, the right-hand side of \eqref{ineg:Beck_2} tends to $+\infty$ in the same limit. It suggests that the constant in the discrete Beckner inequality \eqref{ineg:Beck} is far from optimal with respect to its dependence in $p$.
    \end{rema}

    From Proposition \ref{prop:func_ineq}, we may now deduce a discrete log-Sobolev inequality which applies to some DDFV reconstruction on primal and dual meshes $\T=({\overline \M}, {\overline \Mie})$ associated to a diamond mesh $\DD$.
    \begin{prop}\label{prop:logSobDDFV}
        Let $\T=({\overline \M}, {\overline \Mie})$ be a mesh of $\Omega$, associated to a diamond mesh $\DD$, satisfying the regularity constraint \eqref{eq:Theta}. Consider $\vt\in\R^\T$ and $\vt^{\infty}\in\R^\T$ satisfying $v_K,\ v_K^\infty>0$ for all $K\in\M$ and $v_\ke,\ v_\ke^\infty>0$ for all $\ke\in{\overline\Mie}$ and 
        $$
        \begin{gathered}
            \ds\sumpri\mk v_K\ =\ \sumpri\mk v_K^\infty\ =\ \sumdua\mke v_\ke\ =\ \sumdua\mke v_\ke^\infty\ =:\ M^1\,.
        \end{gathered}
        $$
        We write $M^\infty:=\sup_{K,\ke}\max(v_K^\infty,v_{\ke}^\infty)$. Then there exists a constant $C$ depending only on $\Omega$ and the regularity of the mesh $\Theta$ such that
        \be\label{ineg:logSobDDFV}
        \left\llbracket v_\T\log \frac{v_\T}{v_\T^{\infty}},{\bm 1}_\T\right\rrbracket\ \leq\  
        C \left(M^1\,M^\infty\right)^\frac{1}{2}\sumdiam
        \md \left\vert \nabla^\Ds \sqrt{\frac{v_\T}{v_\T^{\infty}}}\right\vert^2.
        \ee
    \end{prop}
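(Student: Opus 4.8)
The plan is to remove the nonconstant reference state $\vt^\infty$ by an elementary algebraic identity, reduce matters to the \emph{scalar} discrete log-Sobolev inequality \eqref{ineg:logSob_2} applied \emph{separately} on the interior primal mesh $\M$ and on the dual mesh $\overline{\Mie}$, and finally recombine the two resulting edge sums into the diamond seminorm appearing on the right-hand side of \eqref{ineg:logSobDDFV}. First I would split the left-hand side along the two meshes: by definition of $\llbracket\cdot,\cdot\rrbracket_\T$ it equals $\frac12\sumpri\mk v_K\log(v_K/v_K^\infty)+\frac12\sumdua\mke v_\ke\log(v_\ke/v_\ke^\infty)$. Using $s\log s=\Phi_1(s)+s-1$, one has $v_K\log(v_K/v_K^\infty)=v_K^\infty\Phi_1(v_K/v_K^\infty)+v_K-v_K^\infty$, and likewise on the dual cells; summing against $\mk$, resp. $\mke$, and invoking the normalisations $\sumpri\mk v_K=\sumpri\mk v_K^\infty=\sumdua\mke v_\ke=\sumdua\mke v_\ke^\infty=M^1$, the affine remainders cancel, so that the left-hand side is exactly $\frac12\sumpri\mk v_K^\infty\Phi_1(v_K/v_K^\infty)+\frac12\sumdua\mke v_\ke^\infty\Phi_1(v_\ke/v_\ke^\infty)$.

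Next, $\M$ and $\overline{\Mie}$ are both polygonal partitions of $\Omega$, and the DDFV regularity assumption \eqref{eq:Theta} ensures that each of them satisfies the regularity constraint \eqref{regmesh} with a parameter bounded below in terms of $\Theta$ only. Hence Proposition~\ref{prop:func_ineq} applies on each mesh with common mass $M^1$ and with $\sup_{K\in\M}v_K^\infty\le M^\infty$, $\sup_{\ke\in\overline{\Mie}}v_\ke^\infty\le M^\infty$. Applying \eqref{ineg:logSob_2} twice and setting $w_K=\sqrt{v_K/v_K^\infty}$, $w_\ke=\sqrt{v_\ke/v_\ke^\infty}$, I obtain
\[
\llbracket \vt\log(\vt/\vt^\infty),{\bm 1}_\T\rrbracket\ \le\ C\,(M^1 M^\infty)^{1/2}\left(\sum_{\sigma=K|L}\tau_\sigma\,|w_K-w_L|^2 + \sum_{\sigma^*=\ke|\le}\tau_{\sigma^*}\,|w_\ke-w_\le|^2\right),
\]
the two sums running over the interior edges of the primal and of the dual mesh with their respective transmissibilities, and $C$ depending only on $\Omega$ and $\Theta$.

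It then remains to bound the right-hand side above by $C\sumdiam\md\,|\nabla^\Ds w_\T|^2$, which is pure mesh geometry. By construction the dual edge carried by the primal edge $\sigma=K|L$ is the segment $[\xk,\xl]$, so $\msige=d(\xk,\xl)$ and $\tau_\sigma=\msig/\msige$; symmetrically $\tau_{\sigma^*}=\msige/\msig$. On a diamond $\Ds$ the quantity $\md\,|\nabla^\Ds w_\T|^2$ equals $\delta^\Ds w_\T\cdot{\mathbb A}^\Ds\delta^\Ds w_\T$ with ${\mathbb A}^\Ds$ built from ${\bm\varLambda}={\bm I}$; a direct computation using \eqref{eq:m_Dd} gives $\det{\mathbb A}^\Ds=\frac14$ and $\mathrm{tr}\,{\mathbb A}^\Ds=(\msig^2+\msige^2)/(4\md)$, so its smallest eigenvalue is at least $\det{\mathbb A}^\Ds/\mathrm{tr}\,{\mathbb A}^\Ds=\md/(\msig^2+\msige^2)$, which by \eqref{eq:m_Dd}--\eqref{eq:Theta} (these force $\sin(\alpha_\Ds)\ge\Theta^{-1}$ and $\msig/\msige,\ \msige/\msig\le 2\Theta$) is at least $c(\Theta)\max(\msig/\msige,\msige/\msig)$. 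Therefore
\[
\md\,|\nabla^\Ds w_\T|^2\ \ge\ c(\Theta)\left(\frac{\msig}{\msige}\,|w_K-w_L|^2 + \frac{\msige}{\msig}\,|w_\ke-w_\le|^2\right)
\]
on interior diamonds (and $\ge c(\Theta)\frac{\msige}{\msig}|w_\ke-w_\le|^2$ on boundary diamonds, whose contribution is nonnegative anyway); summing over $\DD$, using that each interior primal edge and each interior dual edge is the diagonal of exactly one diamond, yields the claimed bound, and combining it with the previous display proves \eqref{ineg:logSobDDFV}.

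The conceptual step is the splitting of the first paragraph, which makes the variable weight $\vt^\infty$ disappear from the combinatorics and lets the scalar inequalities of Theorem~\ref{theo:func_ineq} do the work; the rest is bookkeeping. The two points requiring care are the transfer of the regularity estimate from \eqref{eq:Theta} to \eqref{regmesh} for the primal and dual meshes, and the treatment of the degenerate boundary diamonds in the last step; both follow the geometric lemmas already used in \cite{CCHK}, and I expect the boundary-diamond bookkeeping to be the main (though routine) obstacle.
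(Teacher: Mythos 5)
Your proposal is correct and follows essentially the same route as the paper: you transfer the regularity constraint \eqref{eq:Theta} into \eqref{regmesh} for both the primal and the dual mesh, apply the scalar inequality \eqref{ineg:logSob_2} separately on each (with transmissibilities $\msig/\msige$ and $\msige/\msig$), and recombine the two edge sums diamond by diamond. The only variation is your final geometric step, where you bound the smallest eigenvalue of ${\mathbb A}^\Ds$ below by $\det {\mathbb A}^\Ds/\mathrm{tr}\,{\mathbb A}^\Ds$, whereas the paper uses directly $(f_K-f_L)/\msige=\nabla^\Ds f_\T\cdot\tkl$ and $(f_\ke-f_\le)/\msig=\nabla^\Ds f_\T\cdot\tkele$ together with $\msig\msige\le 2\Theta\md$; both arguments are valid, the paper's being slightly shorter and yielding a better constant.
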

    
    \begin{proof} In the following, $C$ will denote any positive constant depending only on $\Omega$ and $\Theta$. Let us first notice that the regularity constraint \eqref{eq:Theta} implies that the primal mesh ${\M}$ and the dual mesh ${ \overline \Mie}$ both satisfy the regularity constraint \eqref{regmesh} with $\zeta=1/\Theta^2 $. Indeed, for all $\Ds=\Ds_{\sig,\sige}$, we note that ${\rm d}_\sig=\msige$ and ${\rm d}_{\sige}= \msig$. Moreover, for all $K\in\M$ such that $K\cap \Ds\neq \emptyset$, we have 
        $$
        \md=\ds\frac{1}{2}\msig\msige \sin (\alpha_\Ds)\ \mbox{ and }\ {\rm m}_{\Ds\cap K} = \ds\frac{1}{2} \msig d(\bx_K,\sig)\,,
        $$
        {where $d(\bx_K,\sig)$ denotes the length of the altitude in the triangle with base $\sigma$ and the vertex $x_K$}. As $\md\leq \Theta {\rm m}_{\Ds\cap K}$ by \eqref{eq:Theta}, using \eqref{hypsinalpha}, we obtain that $$
        d(\bx_K,\sig)\geq \ds\frac{1}{\Theta^2} \msige,
        $$
        which corresponds to \eqref{regmesh} for $\M$. Similarly, for all $\Ds=\Ds_{\sig,\sige}$ and for all $\ke\in{\overline\Mie}$ such that $\ke\cap \Ds\neq \emptyset$, we can prove that 
        $$
        d(\bx_\ke,\sige)\geq \ds\frac{1}{\Theta^2} \msig,
        $$
        using the fact that $\md\leq \Theta {\rm m}_{\Ds\cap \ke}$. {Here again, $d(\bx_\ke,\sige)$ denotes the length of the altitude in the triangle with base $\sige$ and the vertex $x_\ke$}. 
        
        Therefore, we can apply Proposition \ref{prop:func_ineq} twice to get
        \begin{gather}
            \sumpri \mk v_K\log\left(\frac{v_K}{v_K^\infty}\right)\ \leq\ C\,(M^1\,M^\infty)^{\frac{1}{2}} 
            \sum_{\Ds_{\sig,\sige}\in\DD} \ds\frac{\msig}{\msige} \left\vert \sqrt{\frac{v_K}{v_K^\infty}}-\sqrt{\frac{v_L}{v_L^\infty}}\right\vert^2,\label{inegprim}\\
            \sumdua \mke v_\ke\log\left(\frac{v_\ke}{v_\ke^\infty}\right)\ \leq\ C\,(M^1\,M^\infty)^{\frac{1}{2}} 
            \sum_{\Ds_{\sig,\sige}\in\DD} \ds\frac{\msige}{\msig} \left\vert \sqrt{\frac{v_\ke}{v_\ke^\infty}}-\sqrt{\frac{v_\le}{v_\le^\infty}}\right\vert^2\,.\label{inegdua}
        \end{gather}
        But, the definition of the discrete gradient $\nabla^\DD$ implies that for all $f_\T\in \R^\T$ and for all $\Ds_{\sig,\sige}\in\DD$, 
        $$
        \frac{f_K-f_L}{\msige}= \nabla^\Ds f_\T \cdot \tau_{K,L}\ \mbox{ and }\ \frac{f_\ke-f_\le}{\msig}= \nabla^\Ds f_\T \cdot \tau_{\ke,\le}.
        $$
        Therefore,
        \be\label{diff2grad}
        \left\vert\frac{f_K-f_L}{\msige}\right\vert^2\leq \left\vert \nabla^\Ds f_\T\right\vert^2 
        \ \mbox{ and }\  
        \left\vert\frac{f_\ke-f_\le}{\msig}\right\vert^2\leq \left\vert \nabla^\Ds f_\T\right\vert^2,
        \ee
        which can be written for $f_\T=\sqrt{v_\T/v_\T^\infty}$. As $\msig\msige\leq 2\Theta \md$ thanks to \eqref{eq:Theta}, we obtain \eqref{ineg:logSobDDFV} 
        by summing \eqref{inegprim} and \eqref{inegdua}.
    \end{proof}
    
    Finally, we state the Csiszár-Kullback inequality. Its proof is rather classical, but as it is hard to find a reference in the literature, we briefly recall it here. This version is greatly inspired by a course of Stéphane Mischler.
    \begin{lem}\label{lem:CK}
        Let $\mu$ be a probability measure and $g$ a positive measurable function such that $\int g\dd\mu = 1$. Then
        \begin{equation}\label{eq:CK}
            \left(\int |g-1|\,\dd\mu\right)^2\ \leq\ 2\int g\log(g)\,\dd\mu
        \end{equation}
    \end{lem}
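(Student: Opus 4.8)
The plan is to recognize \eqref{eq:CK} as (a form of) Pinsker's inequality and to prove it by combining one elementary pointwise estimate with the Cauchy--Schwarz inequality. First I would dispose of the trivial case: if $\int g\log g\,\dd\mu = +\infty$ there is nothing to prove, so one may assume this integral is finite, and since $g\in L^1(\mu)$ and $\mu$ is a probability measure, $\int|g-1|\,\dd\mu<\infty$ as well.

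The heart of the argument is the pointwise inequality
\[
(x-1)^2\ \leq\ \frac23\,(x+2)\,\bigl(x\log x - x + 1\bigr)\,,\qquad \forall x>0\,.
\]
I would prove it by setting $f(x)=\frac23(x+2)(x\log x-x+1)-(x-1)^2$ on $(0,\infty)$, checking the vanishing $f(1)=0$ and $f'(1)=0$, and then computing $f''(x)=\frac43\bigl(\log x + 1/x - 1\bigr)$, which is nonnegative by the classical bound $\log x\geq 1-1/x$. Convexity of $f$ together with $f(1)=f'(1)=0$ forces $f\geq 0$ on $(0,\infty)$, which is the claimed inequality.

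Next I would apply this with $x=g(\cdot)$, take square roots and integrate against $\mu$; Cauchy--Schwarz then gives
\[
\int|g-1|\,\dd\mu\ \leq\ \left(\frac23\int (g+2)\,\dd\mu\right)^{\!1/2}\left(\int \bigl(g\log g - g + 1\bigr)\,\dd\mu\right)^{\!1/2}.
\]
Because $\mu$ is a probability measure and $\int g\,\dd\mu = 1$, the first factor is exactly $\bigl(\frac23(1+2)\bigr)^{1/2}=\sqrt2$ --- this is precisely why the sharp constant $2$ appears --- and the second factor reduces to $\bigl(\int g\log g\,\dd\mu\bigr)^{1/2}$. Squaring yields \eqref{eq:CK}.

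The only genuine obstacle is isolating the correct weight $\frac23(x+2)$ in the pointwise inequality: it must be affine in $x$ so that its $\mu$-integral is computable from $\int g\,\dd\mu$, and its coefficients must be tuned so that this integral comes out to exactly $2$; establishing the inequality itself then rests on the careful second-derivative computation above. Everything else is routine bookkeeping.
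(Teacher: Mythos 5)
Your proof is correct and follows essentially the same route as the paper: the identical pointwise inequality $(x-1)^2\leq\frac23(x+2)(x\log x-x+1)$ (the paper writes it as $\varphi(g)=(2g+4)(g\log g-g+1)-3(g-1)^2\geq0$), established the same way via $f(1)=f'(1)=0$ and convexity, followed by the same Cauchy--Schwarz splitting with the affine weight $\frac23(g+2)$ whose $\mu$-integral equals $2$. No substantive difference to report.
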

    \begin{proof}
        Let $\varphi(g)\ =\ (2g+4)(g\log(g)-g+1)-3(g-1)^2$. A direct computation of the derivatives shows that $\varphi''(g)\geq0$ for all $g\geq0$ and $\varphi'(1) = \varphi(1) = 0$. Thus $\varphi(g)\geq0$ for all $g\geq0$. By the Cauchy-Schwarz inequality it yields 
        \[
        \left(\int |g-1|\,\dd\mu\right)^2\ \leq\  \left(\int \left(\frac{2g}{3}+\frac{4}{3}\right)\,\dd\mu\right)\left(\int (g\log(g)-g+1)\,\dd\mu\right)\,,
        \]
        which is exactly \eqref{eq:CK} since $\mu$ and $g\mu$ are probability measures.
    \end{proof}

    \section{Numerical experiments}\label{sec:num_exp}
   
   \subsection{Numerical resolution of the nonlinear systems}
   
   Even though the continuous problem \eqref{FPmodel} is linear, all the schemes studied in this paper are nonlinear, in the sense that $\bu^n$ in the TPFA context or $u_\T^n$ in the DDFV context solve nonlinear systems of the form 
   \[
   \boldsymbol{\Phi}_\T(\bu^n) = \bu^{n-1} \quad \text{or}\quad 
   \boldsymbol{\Phi}_\T(u_\T^n) = u_\T^{n-1}
   \]
   for some $\boldsymbol\Phi_\T:(0,+\infty)^\T \to \R^\T$ which is singular near the boundary of its domain. Our resolution strategy relies on Newton-Raphson method, but since there is no guaranty that 
   the Newton iterations remain in $(0,+\infty)^\T$, we project them on $[10^{-12},+\infty)^\T$. As a stopping criterion, we choose 
   \[\|\Delta t^{-1}\mathbb M(\boldsymbol{\Phi}_\T(\bu^n) - \bu^{n-1})\|_{\ell^1}\leq 10^{-10},\]
   where $\mathbb M$ is the diagonal mass matrix, the diagonal entry of which being given by ${\rm m}_K$ in the TPFA context, and 
   $\left(\left({\rm m}_K\right)_{K\in\overline{\mathfrak M}}, \left({\rm m}_{K^*}\right)_{K^*\in\overline{\mathfrak M}^*}\right)$ in the DDFV context.
    \subsection{Long time behavior of TPFA schemes}
    
    In the following the domain is set to $\Omega = [0,1]\times[0,1]$ and the simulations are performed on a family of meshes generated from 
    the family of triangular meshes from~\cite{bench_FVCA5}
    with size $h_0 = 0.25$. Each refinement divides the size of the mesh by $2$. Our first test case is taken from \cite{chainais_herda, filbet_herda_2017} and consists in the resolution of \eqref{FPmodel} 
    in the case where ${\bm \varLambda}={\bm I}$, with a potential given by $V(x_1,x_2) = x_1$ and endowed with the boundary conditions $u^D(0,x_2) = 1$, $u^D(1,x_2) = \exp(1)$ on the left and right edges, with $\Gamma^D = (\{0\}\times[0,1])\cup(\{1\}\times[0,1]$), and no-flux boundary conditions on the top and bottom edges, with  $\Gamma^N = ([0,1]\times\{0\})\cup([0,1]\times\{1\}$). The exact solution of this equation is given by  
    \be\label{explicit}
    u_{ex}(x_1,x_2,t)\ =\ \exp(x_1) + \exp\left(\frac{x_1}{2}-\left(\pi^2+\frac 14\right)t\right)\sin(\pi x_1)\,.
    \ee
    We solve the equation with our non-linear scheme \eqref{scheme} for various functions $r$ and meshes. In Table~\ref{tb:cvTPFA}, we show the convergence result. The errors between the numerical and the exact solutions are computed in discrete $L^2$ norm at final time $T=0.1$. The time step is appropriately refined in order to observe the space discretization error only. One notices that all schemes are of order $2$ in space except for the less regular $r(x,y)=\max(x,y)$ for which the order is only $1$.  All the three second order schemes produce very similar results, but it is worth mentioning that the scheme corresponding to $r(x,y) = (x+y)/2$ is 
    the easiest to implement regarding the assembling of the Jacobian matrix. 
    On Figure~\ref{fig:decay}, we illustrate the exponential decay of the numerical solutions towards the steady state. The simulation is performed on the coarsest mesh with $\Delta t = 10^{-4}$. We find a good agreement between the theoretical and experimental rate of decay and observe that the choice $r(x,y) = \max(x,y)$ tends to overdissipate. This is in agreement with inequality \eqref{eq:ineg_r}. Moreover, we observe on the interval $t\in [0, 0.5]$ a maximal number of {Newton} iterations of $2$ for all schemes, while the mean number of {Newton} iterations is $1.69$ for $r(x,y) = (x+y)/2$, $1.58$ for $r(x,y) ={(y-x)}/{\log(y/x)}$, $1.62$ for $r(x,y)= {(\sqrt{x} + \sqrt{y})^2}/{4}$ and $1.93$ for $r(x,y)=\max(x,y)$. After $t=0.5$, the number of {Newton} iterations is always equal to $1$.
    \begin{table}[!h]
        \[
        \begin{array}{|c|cc|cc|cc|cc|}
            \hline	&&&&&&&&\\
            &\scriptstyle r(x,y)\ \,=& \frac{x+y}{2} &\scriptstyle r(x,y)\ \,=& \frac{y-x}{\log(y/x)}&\scriptstyle r(x,y)\ \,=& \frac{(\sqrt{x} + \sqrt{y})^2}{4}&\scriptstyle r(x,y)\ \,=&\scriptstyle \mathrm{max}(x, y)\\[1em]\hline
            \text{Size}(\T)&\text{Error}&\text{Order}&\text{Error}&\text{Order}&\text{Error}&\text{Order}&\text{Error}&\text{Order}\\
            \hline
            h_0   &1.94\cdot10^{-2}&    &1.98\cdot10^{-2}&    &1.97\cdot10^{-2}&    &6.64\cdot10^{-3}&    \\
            h_0/2 &4.94\cdot10^{-3}&1.97&5.08\cdot10^{-3}&1.96&5.05\cdot10^{-3}&1.96&2.86\cdot10^{-3}&1.21\\
            h_0/4 &1.24\cdot10^{-3}&2.00&1.28\cdot10^{-3}&1.99&1.27\cdot10^{-3}&1.99&1.35\cdot10^{-3}&1.08\\
            h_0/8 &3.10\cdot10^{-4}&2.00&3.20\cdot10^{-4}&2.00&3.17\cdot10^{-4}&2.00&6.77\cdot10^{-4}&1.00\\
            h_0/16&7.74\cdot10^{-5}&2.00&8.00\cdot10^{-5}&2.00&7.93\cdot10^{-5}&2.00&3.41\cdot10^{-4}&0.99\\
            h_0/32&1.94\cdot10^{-5}&2.00&2.00\cdot10^{-5}&2.00&1.98\cdot10^{-5}&2.00&1.71\cdot10^{-4}&1.00\\
            \hline
        \end{array}
        \]
        \caption{\textbf{TPFA.} Error in $L^2$ between the numerical and the exact solution at final time and experimental order of convergence.}\label{tb:cvTPFA}
    \end{table}

    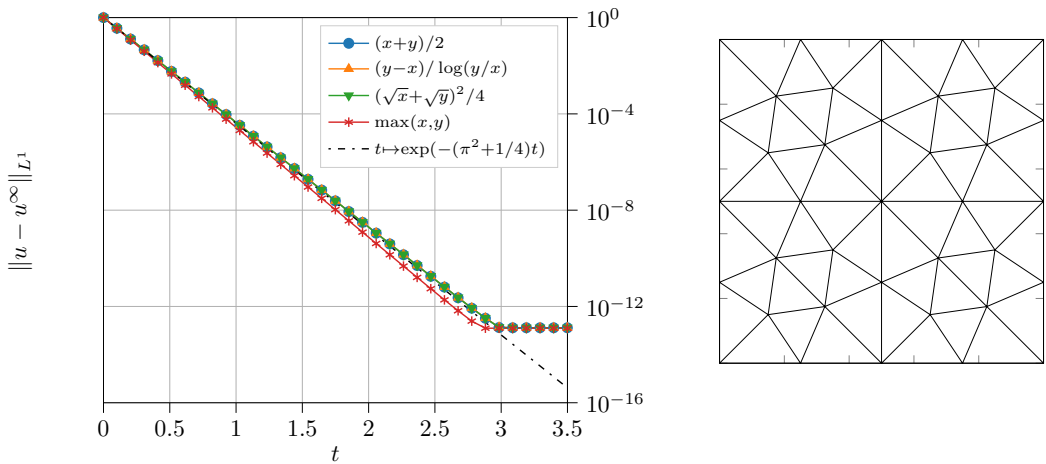
\begin{figure}
    \begin{minipage}{0.58\textwidth}
\begin{tikzpicture}[scale=0.9]

\definecolor{color0}{rgb}{0.12156862745098,0.466666666666667,0.705882352941177}
\definecolor{color1}{rgb}{1,0.498039215686275,0.0549019607843137}
\definecolor{color2}{rgb}{0.172549019607843,0.627450980392157,0.172549019607843}
\definecolor{color3}{rgb}{0.83921568627451,0.152941176470588,0.156862745098039}

\begin{axis}[
legend cell align={left},
legend style={draw=white!80.0!black},
log basis y={10},
tick align=outside,
ytick pos=right,
xtick pos=left,
xlabel={$t$},
ylabel={$\|u-u^\infty\|_{L^1}$},
x grid style={white!69.01960784313725!black},
xmajorgrids,
xmin=0, xmax=3.5,
xtick style={color=black},
y grid style={white!69.01960784313725!black},
ymajorgrids,
ymin=1e-16, ymax=1,
ymode=log,
ytick style={color=black}
]
\addplot [semithick, color0, mark=*, mark size=2, mark options={solid}]
table {%
0 1
0.101000000000002 0.361889303218341
0.203999999999994 0.128901014961485
0.306999999999983 0.0459151839243606
0.409999999999971 0.0163549669132619
0.51299999999996 0.00582557076528877
0.615999999999948 0.00207503477590773
0.718999999999937 0.000739114215568106
0.821999999999926 0.000263267633053555
0.924999999999914 9.37741828713521e-05
1.0279999999999 3.34017387239161e-05
1.13099999999989 1.18974763087198e-05
1.23399999999988 4.2378015793688e-06
1.33699999999987 1.50947660672789e-06
1.43999999999986 5.37665480718164e-07
1.54299999999985 1.91512851175411e-07
1.64599999999984 6.82155976154627e-08
1.74899999999982 2.42979400145502e-08
1.85099999999981 8.74193933068558e-09
1.9539999999998 3.11382009791576e-09
2.05699999999992 1.10912174777816e-09
2.16000000000013 3.95061465994849e-10
2.26300000000035 1.40718304630873e-10
2.36600000000057 5.0124512208905e-11
2.46900000000079 1.78592669433293e-11
2.572000000001 6.37286467227268e-12
2.67500000000122 2.29464565213279e-12
2.77800000000144 8.44510555117648e-13
2.88100000000166 3.23294740272914e-13
2.98400000000187 1.33268348290729e-13
3.08700000000209 1.30393582948181e-13
3.19000000000231 1.30393582948181e-13
3.29300000000252 1.30393582948181e-13
3.39600000000274 1.30393582948181e-13
3.49900000000296 1.30393582948181e-13
};
\addlegendentry{$\scriptstyle (x+y)/2$}
\addplot [semithick, color1, mark=triangle*, mark size=2, mark options={solid}]
table {%
0 1
0.101000000000002 0.362421998729404
0.203999999999994 0.12921518445797
0.306999999999983 0.0460679823765172
0.409999999999971 0.0164240213550685
0.51299999999996 0.00585541757274827
0.615999999999948 0.00208754355562219
0.718999999999937 0.000744239935385511
0.821999999999926 0.000265332412587428
0.924999999999914 9.45948754980726e-05
1.0279999999999 3.37244520947301e-05
1.13099999999989 1.20232586865423e-05
1.23399999999988 4.2864669397531e-06
1.33699999999987 1.52818792968367e-06
1.43999999999986 5.44821266636268e-07
1.54299999999985 1.94236720755075e-07
1.64599999999984 6.92482212675971e-08
1.74899999999982 2.46879999812774e-08
1.85099999999981 8.89020890252831e-09
1.9539999999998 3.16948876975642e-09
2.05699999999992 1.12996826802885e-09
2.16000000000013 4.02850063436572e-10
2.26300000000035 1.43621529827325e-10
2.36600000000057 5.1204604607533e-11
2.46900000000079 1.82600809014715e-11
2.572000000001 6.5219769537561e-12
2.67500000000122 2.34878120129763e-12
2.77800000000144 8.64878193622298e-13
2.88100000000166 3.31124145239017e-13
2.98400000000187 1.3696435373557e-13
3.08700000000209 1.30608965107072e-13
3.19000000000231 1.30608965107072e-13
3.29300000000252 1.30608965107072e-13
3.39600000000274 1.30608965107072e-13
3.49900000000296 1.30608965107072e-13
};
\addlegendentry{$\scriptstyle (y-x)/\log(y/x)$}
\addplot [semithick, color2, mark=triangle*, mark size=2, mark options={solid,rotate=180}]
table {%
0 1
0.101000000000002 0.362288351526679
0.203999999999994 0.129136386129574
0.306999999999983 0.0460296648695491
0.409999999999971 0.0164067041885758
0.51299999999996 0.00584793175572452
0.615999999999948 0.00208440564746982
0.718999999999937 0.000742953817439347
0.821999999999926 0.000264814196367968
0.924999999999914 9.43888426160154e-05
1.0279999999999 3.36434125726297e-05
1.13099999999989 1.19916630235655e-05
1.23399999999988 4.27423885881912e-06
1.33699999999987 1.52348492082539e-06
1.43999999999986 5.43022132885085e-07
1.54299999999985 1.93551660627025e-07
1.64599999999984 6.89884317012025e-08
1.74899999999982 2.45898365538123e-08
1.85099999999981 8.85288300235873e-09
1.9539999999998 3.15546976296625e-09
2.05699999999992 1.12471732902224e-09
2.16000000000013 4.00887709107187e-10
2.26300000000035 1.42889355262552e-10
2.36600000000057 5.09322383131813e-11
2.46900000000079 1.81590671789329e-11
2.572000000001 6.48428235604915e-12
2.67500000000122 2.33523830433442e-12
2.77800000000144 8.59858616362835e-13
2.88100000000166 3.29274272584673e-13
2.98400000000187 1.36184422129419e-13
3.08700000000209 1.30608965107072e-13
3.19000000000231 1.30608965107072e-13
3.29300000000252 1.30608965107072e-13
3.39600000000274 1.30608965107072e-13
3.49900000000296 1.30608965107072e-13
};
\addlegendentry{$\scriptstyle (\sqrt{x} + \sqrt{y})^2/4$}
\addplot [semithick, color3, mark=asterisk, mark size=2, mark options={solid}]
table {%
0 1
0.101000000000002 0.346085355861637
0.203999999999994 0.117552376381051
0.306999999999983 0.0398653326254933
0.409999999999971 0.0135134093632018
0.51299999999996 0.00458005387658152
0.615999999999948 0.00155222529804569
0.718999999999937 0.000526055620962528
0.821999999999926 0.000178281432440442
0.924999999999914 6.0419862332861e-05
1.0279999999999 2.04763745869445e-05
1.13099999999989 6.93947003083288e-06
1.23399999999988 2.35179526110708e-06
1.33699999999987 7.97026399044981e-07
1.43999999999986 2.70113255836555e-07
1.54299999999985 9.15417238820296e-08
1.64599999999984 3.10236059808331e-08
1.74899999999982 1.05139390530478e-08
1.85099999999981 3.60081660648361e-09
1.9539999999998 1.22032106124453e-09
2.05699999999992 4.1356778480189e-10
2.16000000000013 1.40158340086698e-10
2.26300000000035 4.75010300182516e-11
2.36600000000057 1.61035419614861e-11
2.46900000000079 5.4700306004555e-12
2.572000000001 1.87694722867912e-12
2.67500000000122 6.62244890601809e-13
2.77800000000144 2.45614538264415e-13
2.88100000000166 1.24883403549365e-13
2.98400000000187 1.24883403549365e-13
3.08700000000209 1.24883403549365e-13
3.19000000000231 1.24883403549365e-13
3.29300000000252 1.24883403549365e-13
3.39600000000274 1.24883403549365e-13
3.49900000000296 1.24883403549365e-13
};
\addlegendentry{$\scriptstyle \max(x, y)$}
\addplot [semithick, black, dash pattern=on 1pt off 3pt on 3pt off 3pt]
table {%
0 1
0.101000000000002 0.3598456661792
0.203999999999994 0.126894493476664
0.306999999999983 0.0447475514869221
0.409999999999971 0.0157795922361516
0.51299999999996 0.00556445040824157
0.615999999999948 0.00196222487136533
0.718999999999937 0.000691950896013362
0.821999999999926 0.000244006713746598
0.924999999999914 8.60455224445068e-05
1.0279999999999 3.03427386036476e-05
1.13099999999989 1.06999383560378e-05
1.23399999999988 3.77318218762383e-06
1.33699999999987 1.33055942448192e-06
1.43999999999986 4.69202994725408e-07
1.54299999999985 1.65457811360069e-07
1.64599999999984 5.83463610586831e-08
1.74899999999982 2.05750204285115e-08
1.85099999999981 7.32928590684838e-09
1.9539999999998 2.58456919203815e-09
2.05699999999992 9.11411833748016e-10
2.16000000000013 3.21396514844321e-10
2.26300000000035 1.13335943126052e-10
2.36600000000057 3.99663201403835e-11
2.46900000000079 1.40935585085052e-11
2.572000000001 4.96989441947538e-12
2.67500000000122 1.75256309652574e-12
2.77800000000144 6.18016631352928e-13
2.88100000000166 2.17934839199789e-13
2.98400000000187 7.68516439971252e-14
3.08700000000209 2.71006471785195e-14
3.19000000000231 9.55666059039769e-15
3.29300000000252 3.37002142563038e-15
3.39600000000274 1.18839047403432e-15
3.49900000000296 4.1906912164849e-16
};
\addlegendentry{$\scriptstyle t \mapsto \exp(-(\pi^2+1/4)t)$}
\end{axis}

\end{tikzpicture}
    \end{minipage}
    \begin{minipage}{0.38\textwidth}
      \vspace*{-.8cm}
%
%
\begin{tikzpicture}[scale=0.7]

\begin{axis}[%
width=\linewidth,
height=\linewidth,
at={(0.758in,0.499in)},
scale only axis,
unbounded coords=jump,
xmin=0,
xmax=1,
xlabel style={font=\color{white!15!black}},
ymin=0,
ymax=1,
ylabel style={font=\color{white!15!black}},
axis background/.style={fill=white},
xticklabels={,,},
yticklabels={,,}
]
\addplot [color=black, forget plot]
  table[row sep=crcr]{%
0	0.5\\
0.25	0.5\\
nan	nan\\
0	0.5\\
0	0.75\\
nan	nan\\
0	0.5\\
0.15	0.65\\
nan	nan\\
0	0.5\\
0	0.25\\
nan	nan\\
0	0.5\\
0.175	0.325\\
nan	nan\\
0.25	0.5\\
0.5	0.5\\
nan	nan\\
0.25	0.5\\
0.15	0.65\\
nan	nan\\
0.25	0.5\\
0.325	0.675\\
nan	nan\\
0.25	0.5\\
0.35	0.35\\
nan	nan\\
0.25	0.5\\
0.175	0.325\\
nan	nan\\
0.5	0.5\\
0.5	0.75\\
nan	nan\\
0.5	0.5\\
0.325	0.675\\
nan	nan\\
0.5	0.5\\
0.75	0.5\\
nan	nan\\
0.5	0.5\\
0.65	0.65\\
nan	nan\\
0.5	0.5\\
0.5	0.25\\
nan	nan\\
0.5	0.5\\
0.35	0.35\\
nan	nan\\
0.5	0.5\\
0.675	0.325\\
nan	nan\\
0.5	0.75\\
0.5	1\\
nan	nan\\
0.5	0.75\\
0.325	0.675\\
nan	nan\\
0.5	0.75\\
0.35	0.85\\
nan	nan\\
0.5	0.75\\
0.65	0.65\\
nan	nan\\
0.5	0.75\\
0.675	0.825\\
nan	nan\\
0.5	1\\
0.25	1\\
nan	nan\\
0.5	1\\
0.35	0.85\\
nan	nan\\
0.5	1\\
0.75	1\\
nan	nan\\
0.5	1\\
0.675	0.825\\
nan	nan\\
0.25	1\\
0	1\\
nan	nan\\
0.25	1\\
0.35	0.85\\
nan	nan\\
0.25	1\\
0.175	0.825\\
nan	nan\\
0	1\\
0	0.75\\
nan	nan\\
0	1\\
0.175	0.825\\
nan	nan\\
0	0.75\\
0.15	0.65\\
nan	nan\\
0	0.75\\
0.175	0.825\\
nan	nan\\
0.15	0.65\\
0.325	0.675\\
nan	nan\\
0.15	0.65\\
0.175	0.825\\
nan	nan\\
0.325	0.675\\
0.35	0.85\\
nan	nan\\
0.325	0.675\\
0.175	0.825\\
nan	nan\\
0.35	0.85\\
0.175	0.825\\
nan	nan\\
0.75	0.5\\
1	0.5\\
nan	nan\\
0.75	0.5\\
0.65	0.65\\
nan	nan\\
0.75	0.5\\
0.825	0.675\\
nan	nan\\
0.75	0.5\\
0.85	0.35\\
nan	nan\\
0.75	0.5\\
0.675	0.325\\
nan	nan\\
1	0.5\\
1	0.75\\
nan	nan\\
1	0.5\\
0.825	0.675\\
nan	nan\\
1	0.5\\
1	0.25\\
nan	nan\\
1	0.5\\
0.85	0.35\\
nan	nan\\
1	0.75\\
1	1\\
nan	nan\\
1	0.75\\
0.825	0.675\\
nan	nan\\
1	0.75\\
0.85	0.85\\
nan	nan\\
1	1\\
0.75	1\\
nan	nan\\
1	1\\
0.85	0.85\\
nan	nan\\
0.75	1\\
0.85	0.85\\
nan	nan\\
0.75	1\\
0.675	0.825\\
nan	nan\\
0.65	0.65\\
0.825	0.675\\
nan	nan\\
0.65	0.65\\
0.675	0.825\\
nan	nan\\
0.825	0.675\\
0.85	0.85\\
nan	nan\\
0.825	0.675\\
0.675	0.825\\
nan	nan\\
0.85	0.85\\
0.675	0.825\\
nan	nan\\
0	0\\
0.25	0\\
nan	nan\\
0	0\\
0	0.25\\
nan	nan\\
0	0\\
0.15	0.15\\
nan	nan\\
0.25	0\\
0.5	0\\
nan	nan\\
0.25	0\\
0.15	0.15\\
nan	nan\\
0.25	0\\
0.325	0.175\\
nan	nan\\
0.5	0\\
0.5	0.25\\
nan	nan\\
0.5	0\\
0.325	0.175\\
nan	nan\\
0.5	0\\
0.75	0\\
nan	nan\\
0.5	0\\
0.65	0.15\\
nan	nan\\
0.5	0.25\\
0.325	0.175\\
nan	nan\\
0.5	0.25\\
0.35	0.35\\
nan	nan\\
0.5	0.25\\
0.65	0.15\\
nan	nan\\
0.5	0.25\\
0.675	0.325\\
nan	nan\\
0	0.25\\
0.15	0.15\\
nan	nan\\
0	0.25\\
0.175	0.325\\
nan	nan\\
0.15	0.15\\
0.325	0.175\\
nan	nan\\
0.15	0.15\\
0.175	0.325\\
nan	nan\\
0.325	0.175\\
0.35	0.35\\
nan	nan\\
0.325	0.175\\
0.175	0.325\\
nan	nan\\
0.35	0.35\\
0.175	0.325\\
nan	nan\\
0.75	0\\
1	0\\
nan	nan\\
0.75	0\\
0.65	0.15\\
nan	nan\\
0.75	0\\
0.825	0.175\\
nan	nan\\
1	0\\
1	0.25\\
nan	nan\\
1	0\\
0.825	0.175\\
nan	nan\\
1	0.25\\
0.825	0.175\\
nan	nan\\
1	0.25\\
0.85	0.35\\
nan	nan\\
0.65	0.15\\
0.825	0.175\\
nan	nan\\
0.65	0.15\\
0.675	0.325\\
nan	nan\\
0.825	0.175\\
0.85	0.35\\
nan	nan\\
0.825	0.175\\
0.675	0.325\\
nan	nan\\
0.85	0.35\\
0.675	0.325\\
nan	nan\\
};
\end{axis}
\end{tikzpicture}%
    \end{minipage}
        \caption{\textbf{TPFA.} $L^1$ distance between the numerical solution and the steady state (left) on the coarsest mesh (right).}\label{fig:decay}
    \end{figure}

    \subsection{Long time behavior of DDFV schemes}
    
    {We consider a generalization of the test case of \cite{CCHK}. The domain is given by $\Omega=[0,1]\times[0,1]$ and the potential is defined as $V(x_1,x_2)=-x_2$. A family of solutions to \eqref{FPmodel} is given by
    $$
     u_{\varepsilon}(x_1,x_2,t)\ =\ \pi e^{(x_2-\frac{1}{2})} + e^{-(\pi^2+\frac 14)t+\frac{x_2}{2}}\left(\pi\cos(\pi x_2)+\frac{1}{2}\sin(\pi x_2)\right) + \varepsilon\,e^{-\pi^2\Lambda_{11}t}\,\cos(\pi\,x_1)\,.
    $$
    in the case where $\Gamma^D=\emptyset$ and the anisotropy matrix is
    \[
    {\bm \varLambda}=\left(\begin{matrix}
                      \Lambda_{11}&0\\
                      0&1
                     \end{matrix}\right)\,.
    \]
    
    Let us give two comments on this test case. 
    
    First observe that if $\pi^2\Lambda_{11}\ll\pi^2+\frac 14$ and $|\varepsilon| \ll 1$, there are two regimes of decay to equilibrium which are reminiscent of the anisotropy of the equation. Indeed, in the expression of $u_\varepsilon$, the second $x_2$-depending term decaying like $e^{-(\pi^2+\frac 14)t}$ is dominant for small times because $\varepsilon$ is small. Then for larger times the last $x_1$ depending term decaying like $e^{-\Lambda_{11}\pi^2t}$ is greater.

    Second, observe that in the borderline case $\varepsilon =0$, the test case is ``degenerate'' in terms of exponential rate of decay in the following sense. With the same reasoning as before one sees that the minimal decay rate is $\pi^2+\frac 14$ when $\varepsilon =0$ and switches to $\min(\pi^2+\frac 14, \Lambda_{11}\pi^2)$ when $\varepsilon\neq0$ regardless of the size of the perturbation. When $\varepsilon =0$ and $\Lambda_{11}$ is small, any perturbation in the $x_1$ direction (whose first mode in the Fourier decomposition is precisely of the form $e^{-\pi^2\Lambda_{11}t}\,\cos(\pi\,x_1)$)  interferes with the expected decay at rate $\pi^2+\frac 14$. Therefore, unless the symmetries of this particular test case are conserved by the numerical method, one should witness the parasite numerical decay at rate $\pi^2\Lambda_{11}$.

    Let us now describe our numerical tests. We performed numerical simulations on a family of distorted meshes introduced in \cite{bench_FVCA5}, named Kershaw and quadrangular meshes, and on Cartesian meshes. The distorted meshes are represented on Figure~\ref{fig:meshes}. For conciseness we only report here numerical results on Cartesian and distorted quadrangular meshes as the results on Kershaw meshes are qualitatively similar to those of distorted quadrangular meshes. TPFA schemes cannot be used on the former distorted meshes, which motivates the use of the nonlinear DDFV scheme \eqref{schemeDDFV}. Both functions $f$ and $r$ involved in the definition of the scheme are equal to the mean value function, leading to 
    $$
        r^\D(\ut)= \ds\frac{\uk+\ul+\uke+\ule}{4}, \quad \forall {\mathcal D}\in\DD.
    $$
    
    \begin{figure}\centering
     \begin{tabular}{ccc}
      \includegraphics[width = .3\textwidth]{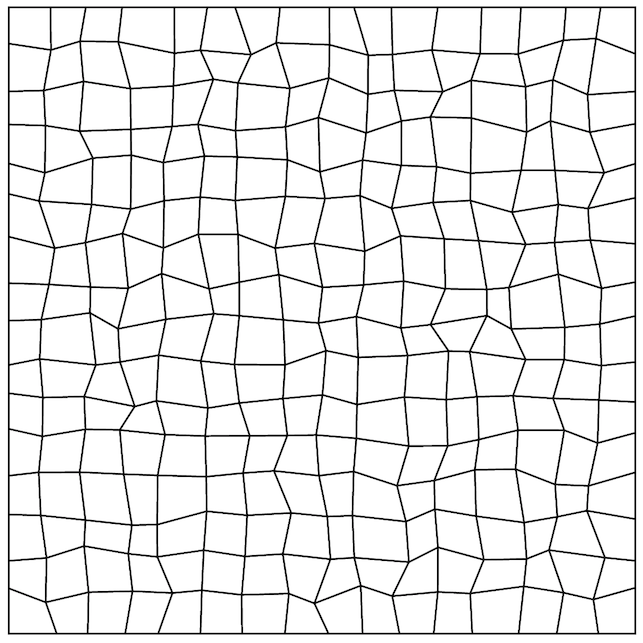}&\quad&
      \includegraphics[width = .3\textwidth]{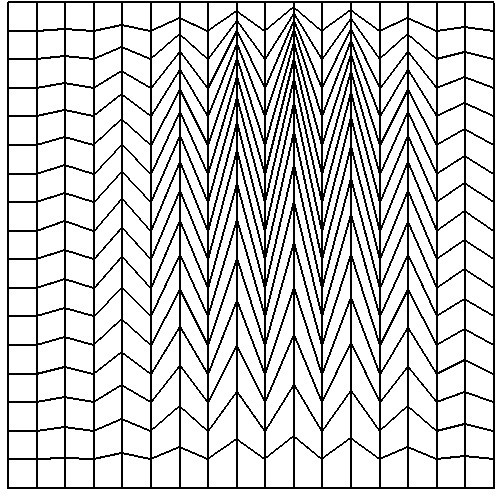}      
     \end{tabular}
     \caption{\textbf{DDFV.} The distorted quandrangular (left) and Kershaw (right) meshes.}\label{fig:meshes}
    \end{figure}

    We start with the test case $u_\varepsilon$ with the choice $\varepsilon =10^{-2}$.  Figure \ref{fig:decayDDFV_epspos}  shows the exponential decay of the relative entropy ${\mathbb E}_{1,\T}$ with respect to time for the different meshes of the sequence of Cartesian and distorted quadrangular meshes. The numerical simulations are performed in both the isotropic case $\Lambda_{11} = 1$ and the anisotropic case $\Lambda_{11} = 0.1$. In both cases the decay rates are well captured by the schemes with better accuracy on the finer meshes (Mesh 4). In the anisotropic case the two regimes of decay are captured by the scheme.
    
     \begin{figure}
     \begin{tabular}{cc}
     $\Lambda_{11} = 1$&$\Lambda_{11} = 0.1$\\[1em]
      \begin{minipage}{0.49\textwidth}\centering
       \scalebox{.83}{  
%
%
\begin{tikzpicture}[scale=0.9]
\definecolor{mycolor1}{rgb}{0.54297,0.22656,0.38281}%
\definecolor{mycolor2}{rgb}{0.00000,0.00000,0.73828}%
\definecolor{mycolor3}{rgb}{0.99609,0.39844,0.11328}%

\begin{axis}[%
tick align=outside,
ytick pos=right,
xtick pos=left,
xmin=0,
xmax=4,
ymode=log,
ymin=1e-20,
ymax=10,
xlabel={$t$},
ylabel={Discrete relative entropy},
axis background/.style={fill=white},
xmajorgrids,
ymajorgrids,
legend style={legend cell align=left, align=left, draw=white!80!black}
]
\addplot [color=red, line width=2.0pt, mark size=2pt, mark=triangle, mark options={solid, rotate=180, red}]
  table[row sep=crcr]{%
   0.032     0.7818882\\
   0.352     0.0035724\\
   0.672     0.0000165\\
   0.992     7.592E-08\\
   1.312     3.495E-10\\
   1.632     1.612E-12\\
   1.952     7.409E-15\\
   2.272     5.725E-17\\
   2.592     4.163E-17\\
   2.912     1.388E-17\\
   3.232     2.429E-17\\
   3.552     1.041E-17\\
   3.872     6.939E-17\\
   };
\addlegendentry{Mesh 1}

\addplot [color=mycolor1, line width=1.0pt, mark size=2pt, mark=+, mark options={solid, mycolor1}]
  table[row sep=crcr]{%
    0.008     1.2048535\\
   0.328     0.0023992\\
   0.648     0.0000051\\
   0.968     1.081E-08\\
   1.288     2.287E-11\\
   1.608     4.837E-14\\
   1.928     5.291E-17\\
   2.248     7.373E-18\\
   2.568     3.469E-18\\
   2.888     9.541E-18\\
   3.208     4.597E-17\\
   3.528     6.505E-18\\
   };
\addlegendentry{Mesh 2}

\addplot [color=black, line width=1.0pt, mark size=2pt, mark=triangle, mark options={solid, rotate=270, black}]
  table[row sep=crcr]{%
  0.002     1.3926933\\
   0.322     0.0021221\\
   0.642     0.0000036\\
   0.962     5.970E-09\\
   1.282     9.984E-12\\
   1.602     1.671E-14\\
   1.922     3.350E-17\\
   2.242     1.735E-18\\
   2.562     3.036E-18\\
   2.882     4.987E-18\\
   3.202     3.220E-17\\
   3.522     4.467E-17\\
   3.842     2.168E-19\\
};
\addlegendentry{Mesh 3}

\addplot [color=mycolor2, line width=1.0pt, mark size=2pt, mark=o, mark options={solid, mycolor2}]
  table[row sep=crcr]{%
   0.0005    1.4521096\\
   0.3205    0.0020543\\
   0.6405    0.0000032\\
   0.9605    5.104E-09\\
   1.2805    8.023E-12\\
   1.6005    1.261E-14\\
   1.9205    1.721E-17\\
   2.2405    4.228E-18\\
   2.5605    5.150E-18\\
   2.8805    2.738E-18\\
   3.2005    8.132E-19\\
   3.5205    1.301E-17\\
   3.8405    2.103E-17\\
};
\addlegendentry{Mesh 4}

\addplot [color=mycolor3, dashed, line width=2.0pt]
  table[row sep=crcr]{%
0.0005	0.989931426512908\\
0.3205	0.00152361965801251\\
0.6405	2.34502794851102e-06\\
0.9605	3.60927089013225e-09\\
1.2805	5.55508788994497e-12\\
1.6005	8.54992667615546e-15\\
1.9205	1.31593320602457e-17\\
2.2405	2.02537433162731e-20\\
2.5605	3.11728677750091e-23\\
2.8805	4.79786709125244e-26\\
};
\addlegendentry{$ t \mapsto e^{-2(\pi^2+1/4)t}$}

\end{axis}
\end{tikzpicture}
      \end{minipage}&
      \begin{minipage}{0.45\textwidth}\centering
       \scalebox{.83}{  
%
%
\begin{tikzpicture}[scale=0.9]
\definecolor{mycolor1}{rgb}{0.54297,0.22656,0.38281}%
\definecolor{mycolor2}{rgb}{0.00000,0.00000,0.73828}%
\definecolor{mycolor3}{rgb}{0.99609,0.39844,0.11328}%
\definecolor{mycolor4}{rgb}{0,0.59,0}%

\begin{axis}[%
tick align=outside,
ytick pos=right,
xtick pos=left,
xmin=0,
xmax=12,
ymode=log,
ymin=1e-16,
ymax=10,
xlabel={$t$},
axis background/.style={fill=white},
xmajorgrids,
ymajorgrids,
legend style={legend cell align=left, align=left, draw=white!80!black}
]
\addplot [color=red, line width=2.0pt, mark size=2pt, mark=triangle, mark options={solid, rotate=180, red}]
  table[row sep=crcr]{%
   0.032     0.7818918\\
   0.352     0.0035764\\
   0.672     0.0000187\\
   0.992     0.0000013\\
   1.312     0.0000007\\
   1.632     0.0000004\\
   1.952     0.0000002\\
   2.272     0.0000001\\
   2.592     6.323E-08\\
   2.912     3.499E-08\\
   3.232     1.936E-08\\
   3.552     1.071E-08\\
   3.872     5.926E-09\\
   4.192     3.279E-09\\
   4.512     1.815E-09\\
   4.832     1.004E-09\\
   5.152     5.556E-10\\
   5.472     3.075E-10\\
   5.792     1.701E-10\\
   6.112     9.416E-11\\
   6.432     5.211E-11\\
   6.752     2.884E-11\\
   7.072     1.596E-11\\
   7.392     8.833E-12\\
   7.712     4.889E-12\\
   8.032     2.706E-12\\
   8.352     1.497E-12\\
   8.672     8.289E-13\\
   8.992     4.588E-13\\
   9.312     2.540E-13\\
   9.632     1.406E-13\\
   9.952     7.780E-14\\
   10.272    4.306E-14\\
   10.592    2.389E-14\\
   10.912    1.320E-14\\
   11.232    7.289E-15\\
   11.552    4.002E-15\\
   11.872    2.293E-15\\
};
\addlegendentry{Mesh 1}

\addplot [color=mycolor1, line width=1.0pt, mark size=2pt, mark=+, mark options={solid, mycolor1}]
  table[row sep=crcr]{%
   0.008     1.2048555\\
   0.328     0.0024032\\
   0.648     0.0000073\\
   0.968     0.0000012\\
   1.288     0.0000006\\
   1.608     0.0000003\\
   1.928     0.0000002\\
   2.248     9.703E-08\\
   2.568     5.212E-08\\
   2.888     2.800E-08\\
   3.208     1.504E-08\\
   3.528     8.080E-09\\
   3.848     4.340E-09\\
   4.168     2.331E-09\\
   4.488     1.252E-09\\
   4.808     6.728E-10\\
   5.128     3.614E-10\\
   5.448     1.941E-10\\
   5.768     1.043E-10\\
   6.088     5.602E-11\\
   6.408     3.009E-11\\
   6.728     1.617E-11\\
   7.048     8.684E-12\\
   7.368     4.665E-12\\
   7.688     2.506E-12\\
   8.008     1.346E-12\\
   8.328     7.231E-13\\
   8.648     3.884E-13\\
   8.968     2.087E-13\\
   9.288     1.121E-13\\
   9.608     6.024E-14\\
   9.928     3.233E-14\\
   10.248    1.739E-14\\
   10.568    9.324E-15\\
   10.888    5.025E-15\\
   11.208    2.723E-15\\
   11.528    1.464E-15\\
   11.848    7.637E-16\\
};
\addlegendentry{Mesh 2}

\addplot [color=black, line width=1.0pt, mark size=2pt, mark=triangle, mark options={solid, rotate=270, black}]
  table[row sep=crcr]{%
    0.002     1.3926942\\
   0.322     0.0021261\\
   0.642     0.0000057\\
   0.962     0.0000012\\
   1.282     0.0000006\\
   1.602     0.0000003\\
   1.922     0.0000002\\
   2.242     9.304E-08\\
   2.562     4.960E-08\\
   2.882     2.644E-08\\
   3.202     1.409E-08\\
   3.522     7.514E-09\\
   3.842     4.005E-09\\
   4.162     2.135E-09\\
   4.482     1.138E-09\\
   4.802     6.068E-10\\
   5.122     3.235E-10\\
   5.442     1.724E-10\\
   5.762     9.192E-11\\
   6.082     4.900E-11\\
   6.402     2.612E-11\\
   6.722     1.392E-11\\
   7.042     7.423E-12\\
   7.362     3.957E-12\\
   7.682     2.109E-12\\
   8.002     1.124E-12\\
   8.322     5.995E-13\\
   8.642     3.195E-13\\
   8.962     1.703E-13\\
   9.282     9.081E-14\\
   9.602     4.841E-14\\
   9.922     2.580E-14\\
   10.242    1.376E-14\\
   10.562    7.325E-15\\
   10.882    3.911E-15\\
   11.202    2.077E-15\\
   11.522    1.093E-15\\
   11.842    5.920E-16\\
  };
\addlegendentry{Mesh 3}

\addplot [color=mycolor2, line width=1.0pt, mark size=2pt, mark=o, mark options={solid, mycolor2}]
  table[row sep=crcr]{%
 0.0005    1.4521101\\
   0.3205    0.0020583\\
   0.6405    0.0000054\\
   0.9605    0.0000012\\
   1.2805    0.0000006\\
   1.6005    0.0000003\\
   1.9205    0.0000002\\
   2.2405    9.207E-08\\
   2.5605    4.898E-08\\
   2.8805    2.606E-08\\
   3.2005    1.387E-08\\
   3.5205    7.378E-09\\
   3.8405    3.925E-09\\
   4.1605    2.088E-09\\
   4.4805    1.111E-09\\
   4.8005    5.912E-10\\
   5.1205    3.146E-10\\
   5.4405    1.674E-10\\
   5.7605    8.904E-11\\
   6.0805    4.738E-11\\
   6.4005    2.521E-11\\
   6.7205    1.341E-11\\
   7.0405    7.136E-12\\
   7.3605    3.797E-12\\
   7.6805    2.020E-12\\
   8.0005    1.075E-12\\
   8.3205    5.718E-13\\
   8.6405    3.042E-13\\
   8.9605    1.619E-13\\
   9.2805    8.612E-14\\
   9.6005    4.582E-14\\
   9.9205    2.438E-14\\
   10.2405   1.298E-14\\
   10.5605   6.904E-15\\
   10.8805   3.676E-15\\
   11.2005   1.955E-15\\
   11.5205   1.038E-15\\
   11.8405   5.508E-16\\
};
\addlegendentry{Mesh 4}

\addplot [color=mycolor3, dashed, line width=2.0pt]
  table[row sep=crcr]{%
0.0005	0.989931426512908\\
0.3205	0.00152361965801251\\
0.6405	2.34502794851102e-06\\
0.9605	3.60927089013225e-09\\
1.2805	5.55508788994497e-12\\
1.6005	8.54992667615546e-15\\
1.9205	1.31593320602457e-17\\
2.2405	2.02537433162731e-20\\
2.5605	3.11728677750091e-23\\
2.8805	4.79786709125244e-26\\
};
\addlegendentry{$ t \mapsto e^{-2(\pi^2+1/4)t}$}

\addplot [color=mycolor4, dash dot, line width=2.0pt]
  table[row sep=crcr]{%
   0.9605    0.0000002\\
   1.2805    7.985E-08\\
   1.6005    4.246E-08\\
   1.9205    2.257E-08\\
   2.2405    1.200E-08\\
   2.5605    6.382E-09\\
   2.8805    3.394E-09\\
   3.2005    1.804E-09\\
   3.5205    9.594E-10\\
   3.8405    5.101E-10\\
   4.1605    2.712E-10\\
   4.4805    1.442E-10\\
   4.8005    7.668E-11\\
   5.1205    4.077E-11\\
   5.4405    2.168E-11\\
   5.7605    1.153E-11\\
   6.0805    6.129E-12\\
   6.4005    3.259E-12\\
   6.7205    1.733E-12\\
   7.0405    9.214E-13\\
   7.3605    4.899E-13\\
   7.6805    2.605E-13\\
   8.0005    1.385E-13\\
   8.3205    7.365E-14\\
   8.6405    3.916E-14\\
   8.9605    2.082E-14\\
   9.2805    1.107E-14\\
   9.6005    5.886E-15\\
   9.9205    3.130E-15\\
   10.2405   1.664E-15\\
   10.5605   8.849E-16\\
   10.8805   4.705E-16\\
   11.2005   2.502E-16\\
   11.5205   1.330E-16\\
   11.8405   7.073E-17\\
};
\addlegendentry{$ t \mapsto 0.01\varepsilon^2e^{-2\pi^2\Lambda_{11}t}$}

\end{axis}
\end{tikzpicture}
      \end{minipage}\\
      \begin{minipage}{0.49\textwidth}\centering
       \scalebox{.83}{  
%
%
\begin{tikzpicture}[scale=0.9]
\definecolor{mycolor1}{rgb}{0.54297,0.22656,0.38281}%
\definecolor{mycolor2}{rgb}{0.00000,0.00000,0.73828}%
\definecolor{mycolor3}{rgb}{0.99609,0.39844,0.11328}%

\begin{axis}[%
tick align=outside,
ytick pos=right,
xtick pos=left,
xmin=0,
xmax=4,
ymode=log,
ymin=1e-23,
ymax=10,
xlabel={$t$},
ylabel={Discrete relative entropy},
axis background/.style={fill=white},
xmajorgrids,
ymajorgrids,
legend style={legend cell align=left, align=left, draw=white!80!black}
]
\addplot [color=red, line width=2.0pt, mark size=2pt, mark=triangle, mark options={solid, rotate=180, red}]
  table[row sep=crcr]{%
  0.032     0.7865425\\
   0.352     0.0036698\\
   0.672     0.0000172\\
   0.992     7.952E-08\\
   1.312     3.686E-10\\
   1.632     1.710E-12\\
   1.952     7.898E-15\\
   2.272     2.634E-17\\
   2.592     1.418E-17\\
   2.912     1.308E-17\\
   3.232     8.115E-18\\
   3.552     5.981E-18\\
   3.872     2.163E-17\\
   4.192     1.246E-17\\
   4.512     5.085E-18\\
   4.832     2.077E-18\\
   5.152     1.252E-18\\
   5.472     1.580E-17\\
   5.792     2.971E-18\\
   6.112     0.       \\
   6.432     4.624E-17\\
   6.752     8.135E-17\\
   7.072     8.369E-17\\
   7.392     5.898E-17\\
   7.712     4.457E-17\\
   8.032     6.596E-17\\
   8.352     7.624E-17\\
   8.672     2.166E-17\\
   8.992     8.710E-17\\
   9.312     5.902E-17\\
   9.632     2.091E-17\\
   9.952     7.022E-17\\
   10.272    9.144E-18\\
   10.592    3.939E-17\\
   10.912    1.930E-17\\
   11.232    1.159E-17\\
   11.552    4.145E-17\\
   11.872    8.887E-18\\
};
\addlegendentry{Mesh 1}

\addplot [color=mycolor1, line width=1.0pt, mark size=2pt, mark=+, mark options={solid, mycolor1}]
  table[row sep=crcr]{%
   0.008     1.2056398\\
   0.328     0.0024133\\
   0.648     0.0000052\\
   0.968     1.099E-08\\
   1.288     2.338E-11\\
   1.608     4.978E-14\\
   1.928     7.061E-17\\
   2.248     4.042E-18\\
   2.568     3.409E-17\\
   2.888     4.109E-17\\
   3.208     1.820E-17\\
   3.528     2.450E-17\\
   3.848     3.399E-17\\
   4.168     9.790E-18\\
   4.488     2.767E-18\\
   4.808     8.611E-18\\
   5.128     1.887E-18\\
   5.448     1.475E-18\\
   5.768     5.137E-18\\
   6.088     1.162E-17\\
   6.408     5.752E-18\\
   6.728     1.288E-17\\
   7.048     5.869E-18\\
   7.368     3.175E-17\\
   7.688     2.194E-17\\
   8.008     1.889E-17\\
   8.328     1.175E-17\\
   8.648     8.021E-18\\
   8.968     1.174E-17\\
   9.288     3.383E-18\\
   9.608     1.946E-17\\
   9.928     4.767E-18\\
   10.248    1.556E-17\\
   10.568    2.098E-18\\
   10.888    2.839E-18\\
   11.208    4.619E-18\\
   11.528    4.833E-18\\
   11.848    2.991E-17\\
};
\addlegendentry{Mesh 2}

\addplot [color=black, line width=1.0pt, mark size=2pt, mark=triangle, mark options={solid, rotate=270, black}]
  table[row sep=crcr]{%
  0.002     1.3928665\\
   0.322     0.0021261\\
   0.642     0.0000036\\
   0.962     6.001E-09\\
   1.282     1.005E-11\\
   1.602     1.684E-14\\
   1.922     1.884E-17\\
   2.242     3.997E-18\\
   2.562     5.202E-18\\
   2.882     4.047E-19\\
   3.202     2.252E-18\\
   3.522     8.134E-19\\
   3.842     1.099E-17\\
   4.162     6.601E-18\\
   4.482     1.003E-17\\
   4.802     1.018E-17\\
   5.122     1.696E-17\\
   5.442     1.366E-17\\
   5.762     1.369E-17\\
   6.082     4.325E-18\\
   6.402     1.006E-17\\
   6.722     1.735E-17\\
   7.042     1.328E-17\\
   7.362     2.059E-17\\
   7.682     2.146E-17\\
   8.002     1.632E-17\\
   8.322     1.295E-17\\
   8.642     1.896E-17\\
   8.962     1.196E-17\\
   9.282     1.602E-17\\
   9.602     2.096E-17\\
   9.922     1.520E-17\\
   10.242    2.510E-17\\
   10.562    1.799E-17\\
   10.882    1.721E-17\\
   11.202    1.541E-17\\
   11.522    1.085E-17\\
   11.842    1.799E-17\\
  };
\addlegendentry{Mesh 3}

\addplot [color=mycolor2, line width=1.0pt, mark size=2pt, mark=o, mark options={solid, mycolor2}]
  table[row sep=crcr]{%
  0.0005    1.452132 \\
   0.3205    0.0020551\\
   0.6405    0.0000032\\
   0.9605    5.110E-09\\
   1.2805    8.036E-12\\
   1.6005    1.263E-14\\
   1.9205    2.014E-17\\
   2.2405    1.054E-18\\
   2.5605    2.285E-18\\
   2.8805    4.137E-18\\
   3.2005    5.917E-18\\
   3.5205    3.310E-18\\
   3.8405    3.510E-18\\
   4.1605    3.404E-18\\
   4.4805    4.828E-18\\
   4.8005    6.790E-18\\
   5.1205    3.188E-18\\
   5.4405    5.615E-19\\
   5.7605    3.884E-18\\
   6.0805    1.685E-18\\
   6.4005    4.284E-18\\
   6.7205    2.690E-18\\
   7.0405    4.279E-18\\
   7.3605    2.342E-18\\
   7.6805    6.255E-18\\
   8.0005    4.433E-18\\
   8.3205    3.407E-18\\
   8.6405    6.353E-18\\
   8.9605    3.334E-18\\
   9.2805    3.635E-18\\
   9.6005    3.834E-18\\
   9.9205    5.151E-18\\
   10.2405   1.132E-18\\
   10.5605   2.588E-18\\
   10.8805   4.335E-18\\
   11.2005   4.257E-18\\
   11.5205   4.230E-18\\
   11.8405   4.329E-18\\
  };
\addlegendentry{Mesh 4}

\addplot [color=mycolor3, dashed, line width=2.0pt]
  table[row sep=crcr]{%
0.0005	0.989931426512908\\
0.3205	0.00152361965801251\\
0.6405	2.34502794851102e-06\\
0.9605	3.60927089013225e-09\\
1.2805	5.55508788994497e-12\\
1.6005	8.54992667615546e-15\\
1.9205	1.31593320602457e-17\\
2.2405	2.02537433162731e-20\\
2.5605	3.11728677750091e-23\\
2.8805	4.79786709125244e-26\\
};
\addlegendentry{$ t \mapsto e^{-2(\pi^2+1/4)t}$}

\end{axis}
\end{tikzpicture}
      \end{minipage}&
      \begin{minipage}{0.45\textwidth}\centering
       \scalebox{.83}{  
%
%
\begin{tikzpicture}[scale=0.9]
\definecolor{mycolor1}{rgb}{0.54297,0.22656,0.38281}%
\definecolor{mycolor2}{rgb}{0.00000,0.00000,0.73828}%
\definecolor{mycolor3}{rgb}{0.99609,0.39844,0.11328}%
\definecolor{mycolor4}{rgb}{0,0.59,0}%

\begin{axis}[%
tick align=outside,
ytick pos=right,
xtick pos=left,
xmin=0,
xmax=12,
ymode=log,
ymin=1e-16,
ymax=10,
xlabel={$t$},
axis background/.style={fill=white},
xmajorgrids,
ymajorgrids,
legend style={legend cell align=left, align=left, draw=white!80!black}
]
\addplot [color=red, line width=2.0pt, mark size=2pt, mark=triangle, mark options={solid, rotate=180, red}]
  table[row sep=crcr]{%
   0.032     0.7866276\\
   0.352     0.0037181\\
   0.672     0.0000216\\
   0.992     0.0000016\\
   1.312     0.0000008\\
   1.632     0.0000005\\
   1.952     0.0000003\\
   2.272     0.0000001\\
   2.592     7.719E-08\\
   2.912     4.285E-08\\
   3.232     2.378E-08\\
   3.552     1.320E-08\\
   3.872     7.328E-09\\
   4.192     4.068E-09\\
   4.512     2.258E-09\\
   4.832     1.254E-09\\
   5.152     6.959E-10\\
   5.472     3.863E-10\\
   5.792     2.144E-10\\
   6.112     1.190E-10\\
   6.432     6.608E-11\\
   6.752     3.669E-11\\
   7.072     2.037E-11\\
   7.392     1.131E-11\\
   7.712     6.276E-12\\
   8.032     3.484E-12\\
   8.352     1.934E-12\\
   8.672     1.074E-12\\
   8.992     5.962E-13\\
   9.312     3.310E-13\\
   9.632     1.838E-13\\
   9.952     1.020E-13\\
   10.272    5.671E-14\\
   10.592    3.147E-14\\
   10.912    1.744E-14\\
   11.232    9.676E-15\\
   11.552    5.415E-15\\
   11.872    3.054E-15\\
  };
\addlegendentry{Mesh 1}

\addplot [color=mycolor1, line width=1.0pt, mark size=2pt, mark=+, mark options={solid, mycolor1}]
  table[row sep=crcr]{%
   0.008     1.2056494\\
   0.328     0.0024216\\
   0.648     0.0000084\\
   0.968     0.0000017\\
   1.288     0.0000009\\
   1.608     0.0000005\\
   1.928     0.0000003\\
   2.248     0.0000001\\
   2.568     7.757E-08\\
   2.888     4.169E-08\\
   3.208     2.241E-08\\
   3.528     1.204E-08\\
   3.848     6.472E-09\\
   4.168     3.478E-09\\
   4.488     1.869E-09\\
   4.808     1.005E-09\\
   5.128     5.400E-10\\
   5.448     2.902E-10\\
   5.768     1.560E-10\\
   6.088     8.384E-11\\
   6.408     4.506E-11\\
   6.728     2.422E-11\\
   7.048     1.302E-11\\
   7.368     6.997E-12\\
   7.688     3.761E-12\\
   8.008     2.021E-12\\
   8.328     1.086E-12\\
   8.648     5.840E-13\\
   8.968     3.139E-13\\
   9.288     1.687E-13\\
   9.608     9.068E-14\\
   9.928     4.872E-14\\
   10.248    2.620E-14\\
   10.568    1.409E-14\\
   10.888    7.595E-15\\
   11.208    4.084E-15\\
   11.528    2.188E-15\\
   11.848    1.154E-15\\
  };
\addlegendentry{Mesh 2}

\addplot [color=black, line width=1.0pt, mark size=2pt, mark=triangle, mark options={solid, rotate=270, black}]
  table[row sep=crcr]{%
   0.002     1.3928683\\
   0.322     0.0021303\\
   0.642     0.0000057\\
   0.962     0.0000011\\
   1.282     0.0000006\\
   1.602     0.0000003\\
   1.922     0.0000002\\
   2.242     9.077E-08\\
   2.562     4.840E-08\\
   2.882     2.581E-08\\
   3.202     1.376E-08\\
   3.522     7.336E-09\\
   3.842     3.911E-09\\
   4.162     2.086E-09\\
   4.482     1.112E-09\\
   4.802     5.929E-10\\
   5.122     3.161E-10\\
   5.442     1.685E-10\\
   5.762     8.986E-11\\
   6.082     4.791E-11\\
   6.402     2.555E-11\\
   6.722     1.362E-11\\
   7.042     7.263E-12\\
   7.362     3.872E-12\\
   7.682     2.065E-12\\
   8.002     1.101E-12\\
   8.322     5.870E-13\\
   8.642     3.129E-13\\
   8.962     1.669E-13\\
   9.282     8.896E-14\\
   9.602     4.742E-14\\
   9.922     2.529E-14\\
   10.242    1.349E-14\\
   10.562    7.221E-15\\
   10.882    3.843E-15\\
   11.202    2.052E-15\\
   11.522    1.089E-15\\
   11.842    5.814E-16\\
  };
\addlegendentry{Mesh 3}

\addplot [color=mycolor2, line width=1.0pt, mark size=2pt, mark=o, mark options={solid, mycolor2}]
  table[row sep=crcr]{%
  0.0005    1.4521326\\
   0.3205    0.0020593\\
   0.6405    0.0000054\\
   0.9605    0.0000012\\
   1.2805    0.0000006\\
   1.6005    0.0000003\\
   1.9205    0.0000002\\
   2.2405    9.258E-08\\
   2.5605    4.926E-08\\
   2.8805    2.621E-08\\
   3.2005    1.395E-08\\
   3.5205    7.420E-09\\
   3.8405    3.948E-09\\
   4.1605    2.101E-09\\
   4.4805    1.118E-09\\
   4.8005    5.947E-10\\
   5.1205    3.164E-10\\
   5.4405    1.684E-10\\
   5.7605    8.958E-11\\
   6.0805    4.766E-11\\
   6.4005    2.536E-11\\
   6.7205    1.349E-11\\
   7.0405    7.180E-12\\
   7.3605    3.820E-12\\
   7.6805    2.033E-12\\
   8.0005    1.082E-12\\
   8.3205    5.755E-13\\
   8.6405    3.062E-13\\
   8.9605    1.629E-13\\
   9.2805    8.669E-14\\
   9.6005    4.612E-14\\
   9.9205    2.454E-14\\
   10.2405   1.305E-14\\
   10.5605   6.946E-15\\
   10.8805   3.705E-15\\
   11.2005   1.972E-15\\
   11.5205   1.051E-15\\
   11.8405   5.591E-16\\
  };
\addlegendentry{Mesh 4}

\addplot [color=mycolor3, dashed, line width=2.0pt]
  table[row sep=crcr]{%
0.0005	0.989931426512908\\
0.3205	0.00152361965801251\\
0.6405	2.34502794851102e-06\\
0.9605	3.60927089013225e-09\\
1.2805	5.55508788994497e-12\\
1.6005	8.54992667615546e-15\\
1.9205	1.31593320602457e-17\\
2.2405	2.02537433162731e-20\\
2.5605	3.11728677750091e-23\\
2.8805	4.79786709125244e-26\\
};
\addlegendentry{$ t \mapsto e^{-2(\pi^2+1/4)t}$}

\addplot [color=mycolor4, dash dot, line width=2.0pt]
  table[row sep=crcr]{%
   0.9605    0.0000002\\
   1.2805    7.985E-08\\
   1.6005    4.246E-08\\
   1.9205    2.257E-08\\
   2.2405    1.200E-08\\
   2.5605    6.382E-09\\
   2.8805    3.394E-09\\
   3.2005    1.804E-09\\
   3.5205    9.594E-10\\
   3.8405    5.101E-10\\
   4.1605    2.712E-10\\
   4.4805    1.442E-10\\
   4.8005    7.668E-11\\
   5.1205    4.077E-11\\
   5.4405    2.168E-11\\
   5.7605    1.153E-11\\
   6.0805    6.129E-12\\
   6.4005    3.259E-12\\
   6.7205    1.733E-12\\
   7.0405    9.214E-13\\
   7.3605    4.899E-13\\
   7.6805    2.605E-13\\
   8.0005    1.385E-13\\
   8.3205    7.365E-14\\
   8.6405    3.916E-14\\
   8.9605    2.082E-14\\
   9.2805    1.107E-14\\
   9.6005    5.886E-15\\
   9.9205    3.130E-15\\
   10.2405   1.664E-15\\
   10.5605   8.849E-16\\
   10.8805   4.705E-16\\
   11.2005   2.502E-16\\
   11.5205   1.330E-16\\
   11.8405   7.073E-17\\
};
\addlegendentry{$ t \mapsto 0.01\varepsilon^2e^{-2\pi^2\Lambda_{11}t}$}

\end{axis}
\end{tikzpicture}
      \end{minipage}
     \end{tabular}
     \caption{\textbf{DDFV.} Exponential decay of the relative entropy on a sequence of Cartesian (top row) and quadrangular (bottom row) meshes mesh with $\varepsilon=10^{-2}$ and $\Lambda_{11} = 1$ (left column) or $\Lambda_{11} = 0.1$ (right column).}\label{fig:decayDDFV_epspos}
    \end{figure}
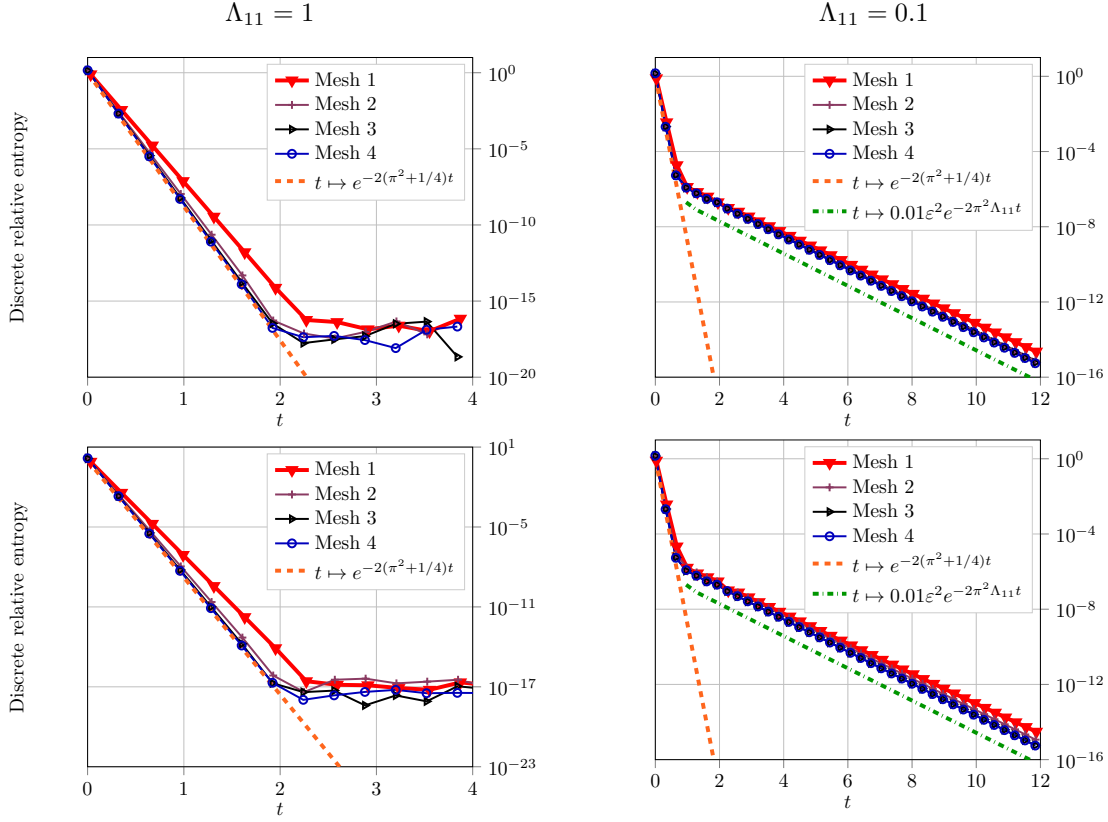

    Then, on Figure~\ref{fig:decayDDFV_epsnull}, we deal with the degenerate test case $\varepsilon = 0$ and $\Lambda_{11} = 0.1$. Once again we plot the evolution of the relative entropy ${\mathbb E}_{1,\T}$ with respect to time for the different meshes of the sequence of Cartesian and distorted quadrangular meshes. This time, the behavior is  different. While the Cartesian mesh is as accurate as in the previous test case, the quadrangular mesh captures a parasite decay starting at around $t =1$. One observes that this effect is mitigated when the mesh is refined. This is in accordance with the above discussion on the ``degeneracy'' of the case $\varepsilon=0$. Indeed, the Cartesian mesh preserves the symmetries of the problem and therefore the discretization does not introduce any perturbation in the $x_1$ direction. On the contrary the distorted mesh does not preserve the symmetry and a  perturbation is unavoidably introduced in the $x_1$ direction. Of course it tends to vanish as the mesh size goes to $0$. In order to further confirm this analysis, one can estimate numerically the ratio between the two experimental rates of decay  (the slopes for small and large times) for the right plot of Figure~\ref{fig:decayDDFV_epsnull}. We find a ratio of $9.49$ for the distoted quandragular mesh, and the ratio of $10.32$ for the same simulation on a Kershaw mesh. The exact ratio is  $(\pi^2+\frac 14)/(\Lambda_{11}\pi^2) \approx 10.25$.

      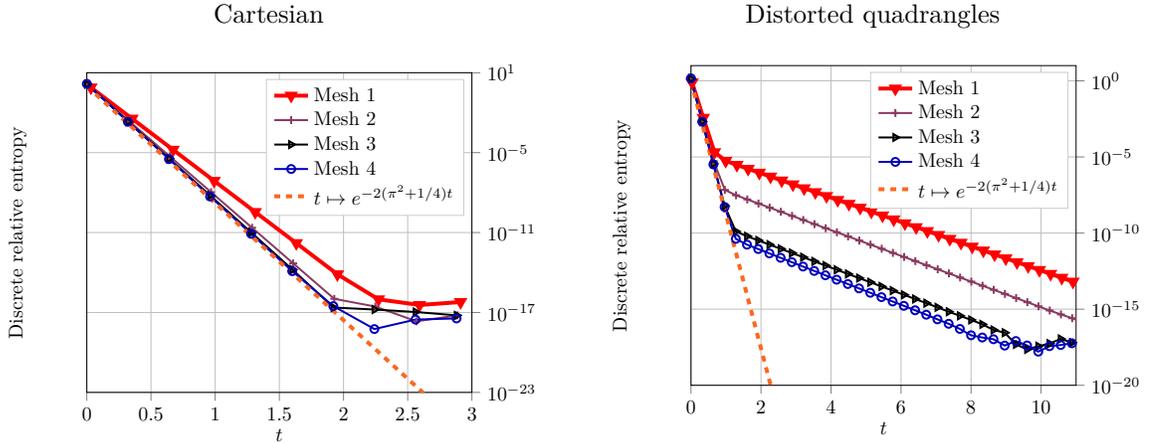
\begin{figure}
     \begin{tabular}{cc}
     Cartesian&Distorted quadrangles\\[1em]
      \begin{minipage}{0.49\textwidth}\centering
       \scalebox{.83}{  
%
%
\begin{tikzpicture}[scale=0.9]
\definecolor{mycolor1}{rgb}{0.54297,0.22656,0.38281}%
\definecolor{mycolor2}{rgb}{0.00000,0.00000,0.73828}%
\definecolor{mycolor3}{rgb}{0.99609,0.39844,0.11328}%

\begin{axis}[%
tick align=outside,
ytick pos=right,
xtick pos=left,
xmin=0,
xmax=3,
ymode=log,
ymin=1e-23,
ymax=10,
xlabel={$t$},
ylabel={Discrete relative entropy},
axis background/.style={fill=white},
xmajorgrids,
ymajorgrids,
legend style={legend cell align=left, align=left, draw=white!80!black}
]
\addplot [color=red, line width=2.0pt, mark size=2pt, mark=triangle, mark options={solid, rotate=180, red}]
  table[row sep=crcr]{%
0.032	 0.78188245998601724\\
0.352	 3.5723690390783089E-003\\
0.672	 1.6518392900075410E-005\\
0.992	 7.5915695972594843E-008\\
1.312	 3.4947013859143450E-010\\
1.632	 1.6122641416371764E-012\\
1.952	7.3864525607092446E-015\\
2.272	 9.7144514654701197E-017\\
2.592	 3.6429192995512949E-017\\
2.912	6.0715321659188248E-017\\
};
\addlegendentry{Mesh 1}

\addplot [color=mycolor1, line width=1.0pt, mark size=2pt, mark=+, mark options={solid, mycolor1}]
  table[row sep=crcr]{%
0.008	 1.2048407259069105\\
0.328	 2.3991645168256488E-003\\
0.648	 5.1073698924815163E-006\\
0.968	 1.0808537387590328E-008\\
1.288	 2.2871834200882679E-011\\
1.608	4.8497664217883596E-014\\
1.928	1.0581813203458523E-016\\
2.248	2.7755575615628914E-017\\
2.568	 2.1684043449710089E-018\\
2.888	6.0715321659188248E-018\\
};
\addlegendentry{Mesh 2}

\addplot [color=black, line width=1.0pt, mark size=2pt, mark=triangle, mark options={solid, rotate=270, black}]
  table[row sep=crcr]{%
0.002	 1.3926683133856972\\
0.322	 2.1220719141431818E-003\\
0.642	 3.5691069419668169E-006\\
0.962	 5.9701852103515030E-009\\
1.282	 9.9843550311132234E-012\\
1.602	 1.6732817388620536E-014\\
1.922	 2.3527187142935446E-017\\
2.242	 1.6588293239028218E-017\\
2.562	 1.0842021724855044E-017\\
2.882	 6.0715321659188248E-018\\
};
\addlegendentry{Mesh 3}

\addplot [color=mycolor2, line width=1.0pt, mark size=2pt, mark=o, mark options={solid, mycolor2}]
  table[row sep=crcr]{%
0.0005	  1.4520615568568647\\
0.3205	  2.0542869404498463E-003\\
0.6405	 3.2467601265071368E-006 \\
0.9605	  5.1042886559961569E-009\\
1.2805	 8.0227620065983357E-012 \\
1.6005	  1.2619435661373468E-014\\
1.9205	  3.0032400177848473E-017\\
2.2405	 5.6920614055488983E-019\\
2.5605	 2.9273458657108620E-018\\
2.8805	 3.5778671692021646E-018\\
};
\addlegendentry{Mesh 4}

\addplot [color=mycolor3, dashed, line width=2.0pt]
  table[row sep=crcr]{%
0.0005	0.989931426512908\\
0.3205	0.00152361965801251\\
0.6405	2.34502794851102e-06\\
0.9605	3.60927089013225e-09\\
1.2805	5.55508788994497e-12\\
1.6005	8.54992667615546e-15\\
1.9205	1.31593320602457e-17\\
2.2405	2.02537433162731e-20\\
2.5605	3.11728677750091e-23\\
2.8805	4.79786709125244e-26\\
};
\addlegendentry{$ t \mapsto e^{-2(\pi^2+1/4)t}$}

\end{axis}
\end{tikzpicture}
      \end{minipage}&
      \begin{minipage}{0.45\textwidth}\centering
       \scalebox{.83}{  
%
%
\begin{tikzpicture}[scale=0.9]
\definecolor{mycolor1}{rgb}{0.54297,0.22656,0.38281}%
\definecolor{mycolor2}{rgb}{0.00000,0.00000,0.73828}%
\definecolor{mycolor3}{rgb}{0.99609,0.39844,0.11328}%

\begin{axis}[%
tick align=outside,
ytick pos=right,
xtick pos=left,
xmin=0,
xmax=11,
ymode=log,
ymin=1e-20,
ymax=10,
xlabel={$t$},
ylabel={Discrete relative entropy},
axis background/.style={fill=white},
xmajorgrids,
ymajorgrids,
legend style={legend cell align=left, align=left, draw=white!80!black}
]
\addplot [color=red, line width=2.0pt, mark size=2pt, mark=triangle, mark options={solid, rotate=180, red}]
  table[row sep=crcr]{%
0.032	0.78664607543423615\\
0.352	3.7309467425875246E-003\\
0.672	2.0480677731937940E-005\\
0.992	5.5597602419665220E-006\\
1.312	3.0038704817126128E-006\\
1.632	1.6643160322893897E-006\\
1.952	9.2341028296301246E-007\\
2.272	5.1246079859134468E-007\\
2.592	2.8441760155961951E-007\\
2.912	1.5785738022412046E-007\\
3.232   8.7615614108397703E-008 \\
3.552    4.8630083515969123E-008 \\
3.872    2.6991987991699210E-008\\
4.192    1.4982032829746363E-008\\
4.512   8.3159592713068318E-009\\
4.832   4.6159324058635613E-009  \\
5.152    2.5621925408166142E-009 \\
5.472    1.4222274981148672E-009\\
5.792    7.8946190259604521E-010 \\
6.112    4.3822565974980020E-010 \\
6.432    2.4325898658791016E-010\\
6.752   1.3503422199462143E-010 \\
7.072    7.4958893317364250E-011\\
7.392   4.1610816535547591E-011\\
7.712   2.3099001012143255E-011 \\
8.032    1.2822833767792353E-011\\
8.352   7.1183599305288166E-012\\
8.672   3.9515772441857144E-012\\
8.992   2.1937558092639210E-012\\
9.312   1.2177884808100889E-012\\
9.632    6.7602580766947727E-013\\
9.952    3.7534156144938105E-013\\
10.272   2.0834882133775065E-013\\
10.592   1.1565794820980969E-013\\
10.912   6.4213930294298707E-014\\
11.232   3.5588212005026057E-014\\
11.552   1.9728924381750375E-014\\
11.872 1.0972785860672496E-014\\
};
\addlegendentry{Mesh 1}

\addplot [color=mycolor1, line width=1.0pt, mark size=2pt, mark=+, mark options={solid, mycolor1}]
  table[row sep=crcr]{%
0.008	 1.2056316640742155\\
0.328	 2.4157581842309886E-003\\
0.648	 5.2958788242123484E-006\\
0.968	 6.8610286923413265E-008\\
1.288	  3.0265919647962541E-008\\
1.608	 1.6178924363290301E-008\\
1.928	 8.6821727382693837E-009\\
2.248	 4.6616583853960164E-009\\
2.568	  2.5032298596356642E-009\\
2.888	 1.3442464134231922E-009\\
3.208    7.2188782642969184E-010\\
3.528    3.8767885876127712E-010\\
3.848    2.0820240407327159E-010\\
4.168    1.1181761574188818E-010\\
4.488    6.0054513803193761E-011\\
4.808   3.2254540789246538E-011\\
5.128    1.7323917144395010E-011\\
5.448   9.3048924139748268E-012\\
5.768   4.9978771880350696E-012\\
6.088   2.6845426094847406E-012\\
6.408    1.4420225601318027E-012\\
6.728   7.7464034719018542E-013\\
7.048   4.1607067947957959E-013\\
7.368   2.2351346835274927E-013\\
7.688    1.2006827269008418E-013\\
8.008   6.4485618198868103E-014\\
8.328   3.4641837348206481E-014\\
8.648    1.8600618275803418E-014\\
8.968   1.0013530935117705E-014\\
9.288   5.3733281705417735E-015\\
9.608    2.8799248300687512E-015\\
9.928   1.5374897684982403E-015\\
10.248  8.1586290292590076E-016\\
10.568  4.5038037677813938E-016\\
10.888  2.5473543972330887E-016\\
11.208  1.2356115884764650E-016\\
11.528 	4.0489009332873070E-017\\
11.848  3.0646217336041485E-017\\
};
\addlegendentry{Mesh 2}

\addplot [color=black, line width=1.0pt, mark size=2pt, mark=triangle, mark options={solid, rotate=270, black}]
  table[row sep=crcr]{%
0.002	 1.3928434505706864  \\
0.322	2.1263876949077819E-003 \\
0.642	3.5833638458823164E-006 \\
0.962	 6.2372559385601008E-009 \\
1.282	  1.2717209246457250E-010\\
1.602	6.1625101616946877E-011 \\
1.922	3.2662042191634389E-011 \\
2.242	1.7339010970026179E-011 \\
2.562	9.2076720926541887E-012 \\
2.882	4.8904860334339786E-012 \\
3.202    2.5978557933083358E-012\\
3.522	  1.3802043552412573E-012\\
3.842    7.3338608532677179E-013\\
4.162	  3.8974303656181845E-013\\
4.482	 2.0715269608797690E-013\\
4.802	1.1010793526848870E-013 \\
5.122	 5.8531090901451927E-014\\
5.442	3.1125797797259366E-014 \\
5.762	 1.6561537090901021E-014\\
6.082	  8.7943681158712887E-015\\
6.402    4.6741008653753493E-015\\
6.722    2.4807009281583124E-015\\
7.042    1.3143644395188082E-015\\
7.362    6.7847593009027515E-016\\
7.682    3.6472133884621968E-016\\
8.002    1.9755553488539218E-016\\
8.322     1.0081276555167215E-016\\
8.642     4.3332666624728670E-017\\
8.962     2.8410590670279439E-017\\
9.282    4.8590046116957233E-018\\
9.602    2.2837118906230337E-018\\
9.922    3.7083432394082094E-018\\
10.242   5.5212991500698123E-018\\
10.562   1.1114689743463785E-017\\
10.882  6.3799439262621517E-018 \\
11.202   6.2980539058024495E-018\\
11.522   4.7842945850185824E-018\\
11.842    2.1304185159507733E-018 \\
};
\addlegendentry{Mesh 3}

\addplot [color=mycolor2, line width=1.0pt, mark size=2pt, mark=o, mark options={solid, mycolor2}]
  table[row sep=crcr]{%
0.0005	 1.4520841577347485\\
0.3205	 2.0551876810986224E-003\\
0.6405	 3.2497227349676488E-006\\
0.9605	5.1775586800299088E-009\\
1.2805	 4.1682230194181094E-011\\
1.6005	1.7309109793154747E-011\\
1.9205	 8.9166605460501605E-012\\
2.2405	4.5991011090166422E-012\\
2.5605	2.3726561345546169E-012\\
2.8805	  1.2255116592442319E-012\\
3.2005	   6.3179391827078264E-013\\
3.5205	   3.2610469683953931E-013\\
3.8405	  1.6835346085991677E-013\\
4.1605	  8.6924187535549194E-014\\
4.4805	  4.4883831079031126E-014\\
4.8005	  2.3181940022062436E-014\\
5.1205	  1.1970690682911867E-014\\
5.4405	  6.1919347440156037E-015\\
5.7605	  3.1911424130306931E-015\\
6.0805	  1.6579193858346314E-015\\
6.4005	   8.4733191435767558E-016\\
6.7205	  4.3243073601168835E-016\\
7.0405     2.1436074672258779E-016\\
7.3605    1.0950309722736079E-016\\
7.6805    5.0352508735290751E-017\\
8.0005    1.9241414682802811E-017\\
8.3205     1.3749560280903553E-017\\
8.6405     1.0573553933034841E-017\\
8.9605    4.0388714228853361E-018\\
9.2805    8.0667306588081035E-018\\
9.6005    4.1163635944894341E-018\\
9.9205    1.6192732662975128E-018\\
10.2405   3.7000918975393345E-018\\
10.5605   4.4206046026219178E-018\\
10.8805   5.7678892399208908E-018\\
11.2005   7.6032655611216526E-018\\
11.5205   1.1521361819072527E-018\\
11.8404   7.3514299081140657E-019\\
};
\addlegendentry{Mesh 4}

\addplot [color=mycolor3, dashed, line width=2.0pt]
  table[row sep=crcr]{%
0.0005	0.989931426512908\\
0.3205	0.00152361965801251\\
0.6405	2.34502794851102e-06\\
0.9605	3.60927089013225e-09\\
1.2805	5.55508788994497e-12\\
1.6005	8.54992667615546e-15\\
1.9205	1.31593320602457e-17\\
2.2405	2.02537433162731e-20\\
2.5605	3.11728677750091e-23\\
2.8805	4.79786709125244e-26\\
};
\addlegendentry{$ t \mapsto e^{-2(\pi^2+1/4)t}$}

\end{axis}
\end{tikzpicture}
      \end{minipage}
     \end{tabular}
     \caption{\textbf{DDFV.} Exponential decay of the relative entropy on a sequence of Cartesian (left) and quadrangular (right) meshes in the degenerate case $\varepsilon=0$ and $\Lambda_{11} = 0.1$.}\label{fig:decayDDFV_epsnull}
    \end{figure}}
    
    \section{Conclusion}
    In this paper, we gave theoretical foundations to the exponential convergence towards equilibrium of finite volume schemes for Drift-Diffusion equations. 
    No-flux boundary conditions as well as Dirichlet conditions at thermal equilibrium are considered. Our approach relies on the discrete entropy method. As in the continuous setting, the long-time behavior of the Fokker-Planck equation is theoretically assessed thanks to functional inequalities. The adaptation to the discrete setting of log-Sobolev and Beckner type inequalities for non-constant reference measures was carried out with this aim. 
    Note that our study encompasses both Two-Point Flux Approximation (TPFA) and Discrete Duality Finite Volumes (DDFV). The later approach is robust with respect to anisotropy and allows for general meshes, but does not lead to monotone discretizations in general. 
    Nonetheless, our method still applies, and could also be extended to other schemes building on Finite Volume methods for anisotropic diffusion (see for instance \cite{Droniou_2014, droniou_2018_gradient}).
    Eventually, we provide numerical evidences of our findings. 
    
    \bigskip
    
    \section*{Acknowledgements} 
    C. Canc\`es, C. Chainais-Hillairet and M. Herda acknowledge support from the Labex CEMPI (ANR-11-LABX-0007-01). C. Chainais-Hillairet also acknowledges the support from Project MoHyCon (ANR-17-CE40-0027-01). The authors also thank Maxime Jonval for his precious help in the implementation of some of the numerical methods.
    \bibliographystyle{siamplain}
    \bibliography{bibli}

\begin{thebibliography}{10}

\bibitem{arnold_markowich_unterreiter_2001}
{\sc A.~Arnold, P.~Markowich, G.~Toscani, and A.~Unterreiter}, {\em On convex
  {S}obolev inequalities and the rate of convergence to equilibrium for
  {F}okker-{P}lanck type equations}, Comm. Partial Differential Equations, 26
  (2001), pp.~43--100, \url{https://doi.org/10.1081/PDE-100002246}.

\bibitem{bailo2018fully}
{\sc R.~Bailo, J.~A. Carrillo, and J.~Hu}, {\em Fully discrete
  positivity-preserving and energy-dissipating schemes for
  aggregation-diffusion equations with a gradient flow structure}, arXiv
  preprint arXiv:1811.11502,  (2018).

\bibitem{bessemoulin_chainais_2017}
{\sc M.~Bessemoulin-Chatard and C.~Chainais-Hillairet}, {\em Exponential decay
  of a finite volume scheme to the thermal equilibrium for drift-diffusion
  systems}, J. Numer. Math., 25 (2017), pp.~147--168,
  \url{https://doi.org/10.1515/jnma-2016-0007}.

\bibitem{bessemoulin_chainais_filbet_2015}
{\sc M.~Bessemoulin-Chatard, C.~Chainais-Hillairet, and F.~Filbet}, {\em On
  discrete functional inequalities for some finite volume schemes}, IMA J.
  Numer. Anal., 35 (2015), pp.~1125--1149,
  \url{https://doi.org/10.1093/imanum/dru032}.

\bibitem{bessemoulin2018hypocoercivity}
{\sc M.~Bessemoulin-Chatard, M.~Herda, and T.~Rey}, {\em Hypocoercivity and
  diffusion limit of a finite volume scheme for linear kinetic equations},
  Math. Comp., 89 (2020), pp.~1093--1133,
  \url{https://doi.org/10.1090/mcom/3490},
  \url{https://doi.org/10.1090/mcom/3490}.

\bibitem{MBC_AJ_2014}
{\sc M.~Bessemoulin-Chatard and A.~J\"{u}ngel}, {\em A finite volume scheme for
  a {K}eller-{S}egel model with additional cross-diffusion}, IMA J. Numer.
  Anal., 34 (2014), pp.~96--122, \url{https://doi.org/10.1093/imanum/drs061},
  \url{https://mathscinet.ams.org/mathscinet-getitem?mr=3168280}.

\bibitem{bianchini_2018_truly}
{\sc R.~Bianchini and L.~Gosse}, {\em A truly two-dimensional discretization of
  drift-diffusion equations on {C}artesian grids}, SIAM J. Numer. Anal., 56
  (2018), pp.~2845--2870, \url{https://doi.org/10.1137/17M1151353},
  \url{https://doi.org/10.1137/17M1151353}.

\bibitem{bodineau_2014_lyapunov}
{\sc T.~Bodineau, J.~Lebowitz, C.~Mouhot, and C.~Villani}, {\em Lyapunov
  functionals for boundary-driven nonlinear drift-diffusion equations},
  Nonlinearity, 27 (2014), pp.~2111--2132,
  \url{https://doi.org/10.1088/0951-7715/27/9/2111},
  \url{http://dx.doi.org/10.1088/0951-7715/27/9/2111}.

\bibitem{buet_2010_chang}
{\sc C.~Buet and S.~Dellacherie}, {\em On the {C}hang and {C}ooper scheme
  applied to a linear {F}okker-{P}lanck equation}, Commun. Math. Sci., 8
  (2010), pp.~1079--1090, \url{http://projecteuclid.org/euclid.cms/1288725273}.

\bibitem{CCHK}
{\sc C.~Canc{\`e}s, C.~Chainais-Hillairet, and S.~Krell}, {\em Numerical
  analysis of a nonlinear free-energy diminishing discrete duality finite
  volume scheme for convection diffusion equations}, Comput. Methods Appl.
  Math., 18 (2018), pp.~407--432, \url{https://doi.org/10.1515/cmam-2017-0043},
  \url{https://mathscinet.ams.org/mathscinet-getitem?mr=3824772}.

\bibitem{cances_guichard_2017}
{\sc C.~Canc{\`e}s and C.~Guichard}, {\em Numerical analysis of a robust free
  energy diminishing finite volume scheme for parabolic equations with gradient
  structure}, Found. Comput. Math., 17 (2017), pp.~1525--1584,
  \url{https://doi.org/10.1007/s10208-016-9328-6}.

\bibitem{carrillo_etal_2001}
{\sc J.~A. Carrillo, A.~J{\"u}ngel, P.~A. Markowich, G.~Toscani, and
  A.~Unterreiter}, {\em Entropy dissipation methods for degenerate parabolic
  problems and generalized {S}obolev inequalities}, Monatsh. Math., 133 (2001),
  pp.~1--82, \url{https://doi.org/10.1007/s006050170032},
  \url{https://doi.org/10.1007/s006050170032}.

\bibitem{carrillo_toscani_1998}
{\sc J.~A. Carrillo and G.~Toscani}, {\em Exponential convergence toward
  equilibrium for homogeneous {F}okker-{P}lanck-type equations}, Math. Methods
  Appl. Sci., 21 (1998), pp.~1269--1286,
  \url{https://doi.org/10.1002/(SICI)1099-1476(19980910)21:13<1269::AID-MMA995>3.3.CO;2-F}.

\bibitem{chainais_herda}
{\sc C.~Chainais-Hillairet and M.~Herda}, {\em {Large-time behaviour of a
  family of finite volume schemes for boundary-driven convection–diffusion
  equations}}, IMA Journal of Numerical Analysis,  (2019),
  \url{https://doi.org/10.1093/imanum/drz037},
  \url{https://doi.org/10.1093/imanum/drz037},
  \url{https://arxiv.org/abs/https://academic.oup.com/imajna/advance-article-pdf/doi/10.1093/imanum/drz037/30959665/drz037.pdf}.
\newblock drz037.

\bibitem{chainais_jungel_schuchnigg_2016}
{\sc C.~Chainais-Hillairet, A.~J{\"u}ngel, and S.~Schuchnigg}, {\em
  Entropy-dissipative discretization of nonlinear diffusion equations and
  discrete {B}eckner inequalities}, ESAIM Math. Model. Numer. Anal., 50 (2016),
  pp.~135--162, \url{https://doi.org/10.1051/m2an/2015031}.

\bibitem{chang1970practical}
{\sc J.~Chang and G.~Cooper}, {\em A practical difference scheme for
  fokker-planck equations}, Journal of Computational Physics, 6 (1970),
  pp.~1--16.

\bibitem{CVV_99}
{\sc Y.~Coudi{\`e}re, J.-P. Vila, and P.~Villedieu}, {\em Convergence rate of a
  finite volume scheme for a two-dimensional convection-diffusion problem},
  M2AN Math. Model. Numer. Anal., 33 (1999), pp.~493--516,
  \url{https://doi.org/10.1051/m2an:1999149},
  \url{https://mathscinet.ams.org/mathscinet-getitem?mr=1713235}.

\bibitem{deimling_1985_nonlinear}
{\sc K.~Deimling}, {\em Nonlinear functional analysis}, Springer-Verlag,
  Berlin, 1985, \url{https://doi.org/10.1007/978-3-662-00547-7},
  \url{https://doi.org/10.1007/978-3-662-00547-7}.

\bibitem{desvillettes_fellner_2007}
{\sc L.~Desvillettes and K.~Fellner}, {\em Entropy methods for
  reaction-diffusion equations with degenerate diffusion arising in reversible
  chemistry},  (2007).

\bibitem{DO_05}
{\sc K.~Domelevo and P.~Omnes}, {\em A finite volume method for the {L}aplace
  equation on almost arbitrary two-dimensional grids}, M2AN Math. Model. Numer.
  Anal., 39 (2005), pp.~1203--1249, \url{https://doi.org/10.1051/m2an:2005047},
  \url{https://mathscinet.ams.org/mathscinet-getitem?mr=2195910}.

\bibitem{Droniou_2014}
{\sc J.~Droniou}, {\em Finite volume schemes for diffusion equations:
  introduction to and review of modern methods}, Math. Models Methods Appl.
  Sci., 24 (2014), pp.~1575--1619,
  \url{https://doi.org/10.1142/S0218202514400041}.

\bibitem{droniou_2018_gradient}
{\sc J.~Droniou, R.~Eymard, T.~Gallou\"{e}t, C.~Guichard, and R.~Herbin}, {\em
  The gradient discretisation method}, vol.~82 of Math\'{e}matiques \&
  Applications (Berlin) [Mathematics \& Applications], Springer, Cham, 2018.

\bibitem{dujardin2018coercivity}
{\sc G.~Dujardin, F.~H\'{e}rau, and P.~Lafitte}, {\em Coercivity,
  hypocoercivity, exponential time decay and simulations for discrete
  {F}okker--{P}lanck equations}, Numer. Math., 144 (2020), pp.~615--697,
  \url{https://doi.org/10.1007/s00211-019-01094-y},
  \url{https://doi.org/10.1007/s00211-019-01094-y}.

\bibitem{EGHbook}
{\sc R.~Eymard, T.~Gallou{\"e}t, and R.~Herbin}, {\em Finite volume methods},
  in Handbook of numerical analysis, {V}ol. {VII}, Handb. Numer. Anal., VII,
  North-Holland, Amsterdam, 2000, pp.~713--1020.

\bibitem{filbet_herda_2017}
{\sc F.~Filbet and M.~Herda}, {\em A finite volume scheme for boundary-driven
  convection-diffusion equations with relative entropy structure}, Numer.
  Math., 137 (2017), pp.~535--577,
  \url{https://doi.org/10.1007/s00211-017-0885-7}.

\bibitem{gosse_2020_aliasing}
{\sc L.~Gosse}, {\em Aliasing and two-dimensional well-balanced for
  drift-diffusion equations on square grids}, Math. Comp., 89 (2020),
  pp.~139--168, \url{https://doi.org/10.1090/mcom/3451},
  \url{https://doi.org/10.1090/mcom/3451}.

\bibitem{bench_FVCA5}
{\sc R.~Herbin and F.~Hubert}, {\em Benchmark on discretization schemes for
  anisotropic diffusion problems on general grids}, in Finite Volumes for
  Complex Applications V, R.~Eymard and J.-M. Herard, eds., Wiley, 2008,
  pp.~659--692, \url{https://www.i2m.univ-amu.fr/fvca5/benchmark/index.html}.

\bibitem{ilin_1969_difference}
{\sc A.~M. Il'in}, {\em A difference scheme for a differential equation with a
  small parameter multiplying the highest derivative}, Mat. Zametki, 6 (1969),
  pp.~237--248.

\bibitem{larsen_1985_discretization}
{\sc E.~W. Larsen, C.~D. Levermore, G.~C. Pomraning, and J.~G. Sanderson}, {\em
  Discretization methods for one-dimensional {F}okker-{P}lanck operators}, J.
  Comput. Phys., 61 (1985), pp.~359--390,
  \url{https://doi.org/10.1016/0021-9991(85)90070-1},
  \url{https://doi.org/10.1016/0021-9991(85)90070-1}.

\bibitem{leray_schauder_1934}
{\sc J.~Leray and J.~Schauder}, {\em Topologie et \'{e}quations
  fonctionnelles}, Ann. Sci. \'{E}cole Norm. Sup. (3), 51 (1934), pp.~45--78,
  \url{http://www.numdam.org/item?id=ASENS_1934_3_51__45_0}.

\bibitem{li_2018_large}
{\sc L.~Li and J.-G. Liu}, {\em Large time behaviors of upwind schemes and
  ${B}$-schemes for {F}okker-{P}lanck equations on $\mathbb{R}$ by jump
  processes}, Mathematics of Computation,  (2020).

\bibitem{liu_2012_entropy}
{\sc H.~Liu and H.~Yu}, {\em An entropy satisfying conservative method for the
  {F}okker-{P}lanck equation of the finitely extensible nonlinear elastic
  dumbbell model}, SIAM J. Numer. Anal., 50 (2012), pp.~1207--1239,
  \url{https://doi.org/10.1137/110829611},
  \url{https://doi.org/10.1137/110829611}.

\bibitem{pareschi2018structure}
{\sc L.~Pareschi and M.~Zanella}, {\em Structure preserving schemes for
  nonlinear fokker--planck equations and applications}, Journal of Scientific
  Computing, 74 (2018), pp.~1575--1600.

\bibitem{scharfetter_1969_large}
{\sc D.~L. Scharfetter and H.~K. Gummel}, {\em Large-signal analysis of a
  silicon read diode oscillator}, IEEE Transactions on electron devices, 16
  (1969), pp.~64--77.

\bibitem{Toscani_99}
{\sc G.~Toscani}, {\em Entropy production and the rate of convergence to
  equilibrium for the {F}okker-{P}lanck equation}, Quart. Appl. Math., 57
  (1999), pp.~521--541, \url{https://doi.org/10.1090/qam/1704435},
  \url{https://mathscinet.ams.org/mathscinet-getitem?mr=1704435}.

\end{thebibliography}
    
\end{document}